\theoremstyle{plain}      
\newtheorem{theorem}{Theorem}[section]
\newtheorem{prop}[theorem]{Proposition}
\newtheorem{lemma}[theorem]{Lemma}
\newtheorem{cor}[theorem]{Corollary}
\theoremstyle{definition} 
\newtheorem{definition}[theorem]{Definition}
\newtheorem{remark}[theorem]{Remark}
\newtheorem{example}[theorem]{Example}
\numberwithin{equation}{section}
\newcommand{\M}{\mathcal{M}}
\newcommand{\C}{\text{Crit}}
\begin{document}

\begin{center}
\vspace*{1cm}
\Huge{$S^1$-Equivariant Morse Cohomology}\\[2cm]
\large{Diplomarbeit}\\[1cm]%[1.3cm]
\large{Humboldt-Universit\"at zu Berlin}\\[0.2cm]
\large{Mathematisch-Naturwissenschaftliche Fakult\"at II}\\[0.2cm]
\large{Institut f\"ur Mathematik}

%\begin{figure}[hbt]
%	\centering
%\includegraphics[width=0.70\textwidth]{husiegel.pdf}
%	\label{siegel}
%\end{figure}

% husiegel.eps: 300dpi, width=1.11cm, height=1.12cm, bb=63 633 194 765
\end{center}
\vspace{4.5cm}

\begin{tabular}{ll}
eingereicht von: &Marko Josef Berghoff\\[0.2cm]
geboren am: &15.~Februar 1982 in Frankfurt/Main\\[0.2cm]
1.\,Gutachter: &Prof.~Dr.~Klaus Mohnke\\[0.2cm]
2.\,Gutachter: &Prof.~Dr.~Helga Baum \\[0.2cm]
\end{tabular}\\
\\[0.4cm]
\begin{center} Berlin, den 22.~Dezember 2009
\end{center}
\setcounter{tocdepth}{2}

\tableofcontents

\newpage

\section{Introduction}
\label{intro}

Equivariant cohomology is a relatively new cohomology theory playing an important role in algebraic geometry and (transformation-)group theory. It has also many applications in modern symplectic geometry (Audin \cite{Au}, Libine \cite{Li}, which is also a good introduction to equivariant cohomology) and theoretical physics (Szabo \cite{Sz}). Roughly speaking, for a $G$-space $X$ equivariant cohomology is a cohomology theory lying somewhere between the ordinary cohomology of $X$ and the group cohomology of $G$. It was first introduced in 1960 by A. Borel \cite{Bo} for the aim of studying transformation groups.

\begin{definition}
Let $H^*$ denote a cohomology functor (say singular, but any other satisfying the Eilenberg-Steenrod axioms would work as well - with coefficients in some ring $\mathcal{R}$, which we omit in the following). Let $X$ be a topological space and let $G$ be a topological group acting continuously on $X$. Associated to $G$ there is the universal bundle of $G$, the principal $G$-bundle

\begin{equation*}
\begin{CD}
 EG \\
 @VV p V \\
 BG.
\end{CD}
\end{equation*}
Here $EG$ is the unique (up to weak homotopy) contractible topological space endowed with a free $G$-action and $BG:=EG/G$ is called the \textit{classifying space of $G$} (see Steenrod \cite{St} for universal bundles, \cite{Mi1} for Milnor's construction of $EG$).

The \textit{equivariant cohomology} $H^*_G(X)$ of $X$ is defined as the cohomology of the total space of the associated fiber bundle

\begin{equation} \label{bundle} 
\begin{CD} 
 \hspace{-0.8 cm} M \to X_G  \\
 @VV \pi V \\
  BG,
\end{CD}
\end{equation}
where $X_G:=(EG\times X)/G$ is the \textit{homotopy quotient} of $X$ with respect to the diagonal action of $G$ on $EG\times X$:

\begin{equation*} 
H^*_G(X):=H^*(X_G).
\end{equation*}

\end{definition}

The idea is to have a cohomology theory reflecting the topological properties of $X$, $G$ and the space of $G$-orbits $X/G$. If the action is free, $H^*(X/G)$ is a good candidate (as it turns out in this case $H^*_G(X)\cong H^*(X/G)$), but for non-free actions $X/G$ can be very pathological as already simple examples show:

\begin{example}
Let $S^1$ act on $\mathbb{C}^2$ by

\begin{equation*}
s.(z_1,z_2):=(s^mz_1,s^nz_2) \quad \text{(m,n $\in \mathbb{N}$ relatively prime)}.
\end{equation*}

This action is not free (for example $(z_1,0)$ has a non-trivial stabilizer) and the quotient space is a manifold with singularities, the \textit{weighted projective space}. The study of such geometrical objects led to the notion of \textit{orbifolds} (cf. Satake \cite{Sat}).
\end{example}
So instead of considering $X/G$ one works with $X_G$. Since the action of $G$ on $EG\times X$ is free, $X_G$ again is a ``nice'' topological space.

But for $G=S^1$ (and most other interesting groups) $X_G$ is infinite-dimensional: $ES^1=S^{\infty}$ is the infinite-dimensional sphere, the direct limit $\varinjlim S^n$ of the directed system $(S^n,\iota_{ij})$, where the maps $\iota_{ij}$ are the inclusions $S^i \to S^j$. As a consequence, if $M$ is a smooth finite-dimensional $G$-manifold with $G$ a Lie group, there are two possible ways using Morse or de Rham theory to get $H^*(M_G)=H_G^*(M)$: Either by using finite-dimensional approximations $M^k_G$ to the homotopy quotient $M_G$ and the fact that $H^n(M^k_G)=H^n(M_G)$ for $k$ large or by adjusting these classical ideas to the equivariant setting.
\\

A de Rham theoretic construction is due to H. Cartan \cite{Ca,Ca2}: Here $A \otimes \Omega(M)$ with $A$ an acyclic algebra of \textit{type C} (this is the appropriate analogon to freeness of the $G$-action on $EG$) is used as an algebraic substitute for the complex of differential forms on $EG\times M$. $\Omega(M_G)$ is then modelled by the subcomplex of \textit{basic forms} on $A \otimes \Omega(M)$ (a form $\omega$ is called \textit{basic} if it is invariant under the $G$-action, i.e. $\sigma_g^*\omega=\omega\ \forall g\in G$, and kills each vector $v$ tangent to a $G$-orbit, i.e. $\iota_v\omega \equiv0$). This complex of equivariant forms is then endowed with a twisted de Rham differential $d_{\text{eq}}$ incorporating the induced $G$-action on forms and an equivariant version of the de Rham Theorem states that $H_*((A\otimes \Omega(M))_{\text{basic}},d_{\text{eq}})\cong H^*_G(M)$. For details and other formulations of the equivariant de Rham complex, see the original work by Cartan, or Libine \cite{Li} as well as the book by Guillemin and Sternberg \cite{GS}. 

On the other hand, there are also Morse-theoretic approaches to equivariant cohomology. Austin and Braam \cite{AB} use Morse-Bott theory for a $G$-equivariant (i.e. $G$-invariant) function $f$ on $M$ together with equivariant differential forms on the critical submanifolds of $f$ to construct a complex whose homology is isomorphic to $H^*_G(M)$. The chains of this complex are equivariant forms on $\C(f)$ and the differential is given by integrating these forms over gradient flow lines of $f$. 

In some way similiar to our approach is a special case of Hutchings ``Floer homology of families'' \cite{Hu}, where ``family'' means a set of equivalent objects parametrized by a smooth manifold, e.g.~(finite-dimensional approximations to) the bundle $\pi$ in $\eqref{bundle}$. $H^*(M^n_G)$ is the homology of a Morse complex constructed by studying critical points and flow lines of a vector field $V+W$ on $M^n_G$. Here $W$ is the horizontal lift of the gradient vector field of a Morse function $f:BG^n \to \mathbb{R}$ and $V$ is the fiberwise gradient vector field of Morse functions $f_x:M_x\to \mathbb{R}$ on the fibers over $x\in \C(f)$.
\\

From now on let $G=S^1$. In this thesis we surpass the problem of $M_{S^1}$ being infinite-dimensional by reducing the computation to the finite-dimensional fibers $M$ of the bundle $\pi$. On these fibers we use a deformed Morse complex. 

This is due to Yuri Chekanov who introduced this approach to equivariant cohomology in talks given at the MPI Leipzig and the ETH Z\"urich in 2004 and 2005, respectively. 

His idea is to deform the Morse complex associated to a Morse function $f:M\to \mathbb{R}$ by incorporating the $S^1$-action on $M$ into the definition of the coboundary operator: $d$ counts not only usual gradient flow lines of $f$, but also such ones that are allowed to ``jump'' along orbits of the action for finite time intervals. The benefit of this is that we are working the whole time on a finite dimensional space. This allows us to use the geometric and more intuitive methods from the theory of dynamical systems for studying these ``jumping'' gradient flow lines of $f$ (cf. Weber \cite{We}) instead of the heavyweight functional analytic apparatus used in Floer theory, which basically is Morse theory on infinite-dimensional manifolds (see Salamon \cite{Sal} for Floer theory, Schwarz \cite{Sch} for a Floer-type approach to Morse homology). 

We get a complex freely generated by the critical points of $f$ tensored with the polynomial ring $\mathcal{R}[T]=H^*(BS^1)=H^*(\mathbb{CP}^{\infty})$, the $S^1$-equivariant cohomology of a one-point space. The differential operator is defined by counting ``jumping'' gradient flow lines which are modelled as follows:\\
A \textit{$k$-jump flow line} is a solution of the ODE

\begin{equation*}
\dot{u}(t)=V_t(u(t)).
\end{equation*}
Here $V_t$ is a time-dependent vector field associated to the gradient of a homotopy $f_t$ satisfying
\begin{equation*}
f_t(x)=\begin{cases}
    f(x) & \text{if $ t < t_1 $ },\\
    f(s_1.x) & \text{if $t_2 \leq t < t_3$ }, \\
    f(s_2.s_1.x) & \text{if $ t_4 \leq t < t_5 $}, \\
    \qquad \cdots & \\
    f(s_k. \cdots .s_1.x) & \text{if  $t_{2k} \leq t$, }
  \end{cases}
\end{equation*}
for some $t_1 < \ldots < t_{2k} \in \mathbb{R}$ and $s_i \in S^1$.

This will be explained in detail in Chapter 3. Basically, our approach translates into a special case of an idea of Frauenfelder \cite{Ff} called ``flow lines with cascades''. Here the cohomology of a manifold $M$ is derived from studying gradient flow lines of a Morse-Bott function $f$ on $M$ (the ``cascades'') and gradient flow lines of a Morse function $h$ on $\C(f)$. Applying this idea to the bundle $\pi$ in $\eqref{bundle}$ and using its properties together with the special structure of its fibers and base space $\mathbb{CP}^{\infty}$ the cascades translate into jumps along orbits of the $S^1$-action on $M$.

Hope is that, as in the relation of Morse and Floer theory, our construction serves as a toy model for a similiar approach to $S^1$-equivariant Floer cohomology.

This thesis is organized as follows: In the next chapter we review basic Morse and Morse-Bott theory, the gradient flow line approach to Morse homology and introduce the Morse complex associated to flow lines with cascades which will be used in the end to justify our construction. In Chapter 3 the equivariant Morse cochain groups $CM_{S^1}^*$ and the equivariant Morse differential $d_{S^1}$ are defined; this involves the definition of the moduli spaces of $k$-jump flow lines. In the fourth chapter we continue examining the properties of these moduli spaces: We show by using the theory of dynamical systems, that they carry the structure of finite-dimensional manifolds which admit a natural compactification. Furthermore, there is a complementary concept to compactification: The gluing map, which glues flow lines from different moduli spaces to a flow line in some higher dimensional moduli space. Putting these facts together we conclude in the Chapter $5$ that $(CM_{S^1}^*,d_{S^1})$ is actually a cochain complex and, using the idea of flow lines with cascades, we show that $H_*(CM_{S^1}^*,d_{S^1})\cong H^*_{S^1}(M)$. After that we finish with an outlook.
\\
 
I want to thank my family for their support and my advisor Klaus Mohnke for suggesting me this interesting topic and always having time for my numerous questions and many fruitful discussions. 

Furthermore, I have to give special credit to Yuri Chekanov, because the idea of approaching $S^1$-equivariant cohomology with jumping flow lines is due to him, and I benefited greatly from his visit to Berlin, where he took the time to explain his ideas to me.

\section{Preliminaries}
\label{prelims}

In this chapter we review basic Morse and Morse-Bott theory. A good reference for this is the book ``Lectures on Morse Homology'' by Banyaga and Hurtubise \cite{BH2}. We start with some elementary facts about Morse theory, then introduce the gradient flow line approach to Morse homology. Then we continue with some Morse-Bott theory and explain how flow lines with cascades are used to compute $H^*(M)$.

From now on throughout this thesis let $(M,g)$ denote a smooth $n$-dimensional closed manifold with Riemannian metric $g$ endowed with a smooth $S^1$-action. Assume without loss of generality that $M$ is connected.
\subsection{Morse theory}
\label{mt}

The basic idea of Morse(-Bott) theory, originally due to M. Morse \cite{Mo}, is to extract information about the topology of $M$ by studying the local and global behaviour of smooth functions $f:M\to \mathbb{R}$. 

\begin{definition}
A function $f \in C^{\infty}(M,\mathbb{R})$ is called \textit{Morse}, if all its critical points are non-degenerate. A critical point is called non-degenerate, if the matrix associated to the symmetric bilinear form $D^2f(x): T_xM \times T_xM \to \mathbb{R}$, the Hessian of $f$ at $x$, is non-singular. In local coordinates $(p^i)$ around $x$ the entries of the Hessian matrix $H$ are given by \smash{$h_{ij}=\frac{\partial^2f}{\partial p^i \partial p^j}(x)$}.
\end{definition}

Note: The non-degeneracy condition implies that $\C(f)$ is an isolated set and from compactness of $M$ it follows that it is finite.

\begin{definition}[The Morse index]
For a Morse function $f$ the \textit{Morse index} $\mu(x)$ of a critical point $x$ is defined as the number of negative eigenvalues of the Hessian of $f$ at $x$. 
\end{definition}

We state four classical theorems of Morse theory; for proofs we refer to Milnor \cite{Mi2}:

\begin{lemma}[Morse Lemma]\label{ml}
For $x\in \C(f)$ there are local coordinates $(p^i)$ around $x$ such that 
\begin{align*}
f(p)=&f(x) - \sum_{i=1}^{\mu(x)}(p^i)^2 + \sum_{j > \mu(x)}^n (p^j)^2, \\
&p^i(x)=0 \quad \forall i=1, \dots, n.
\end{align*}
\end{lemma}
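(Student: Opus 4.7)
The plan is to follow the classical approach: reduce $f$ to its quadratic form by a sequence of smooth coordinate changes, using non-degeneracy of the Hessian at every step. First, I would work in an arbitrary chart centered at $x$ (so $x$ corresponds to the origin) and subtract $f(x)$, reducing to the case $f(0)=0$. Applying Hadamard's lemma twice to exploit $f(0)=0$ and $df(0)=0$, on a star-shaped neighborhood of $0$ one may write $f(p) = \sum_{i,j=1}^n h_{ij}(p)\, p^i p^j$ with smooth, symmetric coefficients $h_{ij}$ satisfying $h_{ij}(0) = \tfrac{1}{2}\,\partial^2 f/\partial p^i \partial p^j(0)$. Non-degeneracy of the Hessian translates into invertibility of the matrix $(h_{ij}(0))$.

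The core step is an induction that diagonalizes one variable at a time. Suppose $f$ has been brought, in some coordinates $(q^i)$, into the form $\varepsilon_1(q^1)^2+\cdots+\varepsilon_{k-1}(q^{k-1})^2 + \sum_{i,j\ge k} H_{ij}(q)\, q^i q^j$ with $\varepsilon_i=\pm 1$ and the residual symmetric matrix $(H_{ij}(0))_{i,j\ge k}$ still non-singular. After a linear change of the variables $q^k,\dots,q^n$ I may assume $H_{kk}(0)\neq 0$, so $|H_{kk}(q)|$ is bounded away from zero near $0$. Completing the square in $q^k$ and setting
\begin{equation*}
\tilde q^k := \sqrt{|H_{kk}(q)|}\,\Bigl(q^k + \sum_{j>k}\frac{H_{kj}(q)}{H_{kk}(q)}\, q^j\Bigr), \qquad \tilde q^i := q^i\ \text{for } i\neq k,
\end{equation*}
defines a local smooth diffeomorphism (its Jacobian at $0$ is non-singular precisely because $H_{kk}(0)\neq 0$) in which the $k$-th block becomes $\varepsilon_k(\tilde q^k)^2$, the earlier diagonal terms are untouched, and the remaining block stays symmetric and non-degenerate. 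After $n$ such iterations I obtain coordinates $(p^i)$ in which $f(p)=\sum_i \varepsilon_i (p^i)^2$.

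The final step is to identify the signs. Sylvester's law of inertia, applied to the symmetric bilinear form $D^2 f(x)$, tells me the number of negative coefficients $\varepsilon_i$ is a coordinate-invariant of the form and equals its number of negative eigenvalues, i.e.\ $\mu(x)$. Relabeling coordinates so that the negative squares come first, and adding $f(x)$ back in, yields the stated normal form. The main technical obstacle will be ensuring that each inductive change of variables is genuinely a diffeomorphism on a common neighborhood of $0$ and that smoothness is preserved; this hinges on non-degeneracy being inherited by the residual symmetric block at every stage, which is what both guarantees a non-zero diagonal entry can be produced by a linear tweak and makes $\sqrt{|H_{kk}(q)|}$ smooth on some neighborhood of $0$.
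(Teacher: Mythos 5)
The paper does not prove this lemma itself but defers to Milnor \cite{Mi2}, and your sketch is exactly that classical argument: Hadamard's lemma to write $f(p)=\sum h_{ij}(p)p^ip^j$ with smooth symmetric coefficients, inductive completion of squares where the residual block $(H_{ij}(0))_{i,j\ge k}$ stays non-degenerate (the Hessian at a critical point is well defined and block-diagonal at each stage), and Sylvester's law of inertia to identify the number of negative signs with $\mu(x)$. The proposal is correct and follows the same (cited) route as the paper.
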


\begin{prop} 
Set $M_c:= f^{-1}((-\infty , c])$. Then, if there is no critical value in $[a,b]$, $M_a$ is diffeomorphic to $M_b$. It is even a deformation retract of $M_b$.
\end{prop}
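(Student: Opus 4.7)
The plan is to construct an explicit deformation retraction of $M_b$ onto $M_a$ by flowing along a suitably renormalized negative gradient of $f$. Since there are no critical values in $[a,b]$, $\nabla f$ is nowhere zero on the compact set $f^{-1}([a,b])$, and this is precisely what makes the renormalization possible.

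First I would define, on an open neighborhood $U$ of $f^{-1}([a,b])$ containing no critical points, the vector field
\begin{equation*}
X := -\frac{\nabla f}{g(\nabla f, \nabla f)},
\end{equation*}
which satisfies $X \cdot f \equiv -1$ on $U$. Then I would choose a smooth bump function $\rho : M \to [0,1]$ equal to $1$ on $f^{-1}([a,b])$ and supported in $U$, and set $Y := \rho X$, extended by zero outside $U$. Since $M$ is compact, $Y$ is a complete smooth vector field whose flow $\phi_t$ is defined for all $t \in \mathbb{R}$.

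Next I would verify that for every $p \in f^{-1}([a,b])$ one has $\tfrac{d}{dt} f(\phi_t(p)) = -1$ as long as $\phi_t(p)$ remains in $f^{-1}([a,b])$. Consequently $\phi_{b-a}$ maps $f^{-1}(b)$ diffeomorphically onto $f^{-1}(a)$, and more generally the map $\phi_{b-a}$ restricted to $f^{-1}([a,b])$ is a diffeomorphism onto itself sending level $c \in [a,b]$ to level $\max(a, c-(b-a))$-- with the caveat that we want the points in $f^{-1}(a)$ to remain fixed. To handle this cleanly, I would define the retraction by a position-dependent flow time:
\begin{equation*}
r_t(p) := \begin{cases} p & \text{if } f(p) \leq a, \\ \phi_{t(f(p)-a)}(p) & \text{if } a \leq f(p) \leq b. \end{cases}
\end{equation*}
Smoothness across $f^{-1}(a)$ follows because the flow-time factor vanishes there, and $r_0 = \mathrm{id}_{M_b}$, $r_1(M_b) \subseteq M_a$ with $r_1|_{M_a} = \mathrm{id}_{M_a}$. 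Thus $\{r_t\}_{t\in[0,1]}$ is a strong deformation retraction of $M_b$ onto $M_a$, which in particular implies $M_a \cong M_b$ as smooth manifolds (taking $r_1$ as the diffeomorphism onto its image, combined with the identity on $M_a$).

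The main technical nuisance is ensuring that the piecewise definition of $r_t$ glues to a smooth map at the level set $f^{-1}(a)$ and that $\phi_{b-a}$ does not push points out of the cutoff region $U$ before completing its descent. Both issues are handled by choosing the bump function $\rho$ so that $Y = X$ on a slightly larger set $f^{-1}([a-\varepsilon, b+\varepsilon])$; then the flow of $Y$ agrees with the flow of $X$ throughout the relevant time interval, so $f(\phi_t(p)) = f(p) - t$ holds as needed, and the factor $f(p)-a$ vanishing to first order on $f^{-1}(a)$ guarantees smoothness of $r_t$.
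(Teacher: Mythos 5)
Your strategy is the classical one: the paper itself gives no proof of this proposition (it refers to Milnor's \emph{Morse Theory}), and your renormalized gradient field $X$, the cutoff $Y=\rho X$, and the position-dependent flow time are precisely Milnor's construction, so the retraction part is essentially fine. However, two specific claims are wrong as written, and one of them is where the diffeomorphism statement actually lives. The diffeomorphism $M_a\cong M_b$ cannot be obtained from $r_1$: if $q=\phi_s(p)$ with both points in $f^{-1}([a,b])$, then $r_1(q)=\phi_{f(q)-a}(\phi_s(p))=\phi_{f(p)-a}(p)=r_1(p)$, so $r_1$ collapses each flow segment in the collar to its endpoint on $f^{-1}(a)$; it is a surjective, highly non-injective retraction, and in general a deformation retract need not even be homeomorphic to the ambient space. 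The correct diffeomorphism is the time-$(b-a)$ flow map itself: since $Y\cdot f=-\rho\in[-1,0]$ everywhere and $=-1$ on $f^{-1}([a,b])$, one checks $\phi_{b-a}(M_b)\subseteq M_a$ and $\phi_{a-b}(M_a)\subseteq M_b$, so $\phi_{b-a}$ restricts to a diffeomorphism of $M_b$ onto $M_a$ carrying $f^{-1}(b)$ onto $f^{-1}(a)$. You essentially verify the needed level estimates, so only the conclusion has to be attached to the right map; note also that your side remark that $\phi_{b-a}$ maps $f^{-1}([a,b])$ onto itself, sending level $c$ to $\max(a,c-(b-a))$, is false, because below level $a$ the cutoff field need not vanish and points keep descending --- fortunately this is never used.

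Second, $r_t$ is not smooth across $f^{-1}(a)$, contrary to your assertion: at a point $p$ with $f(p)=a$ the differential of the branch $p\mapsto \phi_{t(f(p)-a)}(p)$ equals $\mathrm{id}+t\,Y(p)\otimes df(p)$, which differs from the identity (the differential of the other branch) for $t>0$; vanishing of the flow-time factor gives continuity, not $C^1$-matching. This is harmless for the proposition, since a deformation retraction only needs to be continuous, but it is a second reason why the diffeomorphism must come from $\phi_{b-a}$ rather than from the homotopy $r_t$.
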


\begin{prop}
If there is one critical point of index $k$ in $[a,b]$, then $M_b$ is homotopy equivalent to $M_a$ with a $k$-cell attached.
\end{prop}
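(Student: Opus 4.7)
The plan is to follow Milnor's classical argument: use the Morse Lemma to put $f$ in a quadratic normal form around the critical point, then flatten $f$ locally by a bump function so that the attachment of a $k$-cell becomes visible as a deformation retract. Let $p$ be the unique critical point in $f^{-1}([a,b])$, with $\mu(p) = k$ and critical value $c \in (a,b)$. By the preceding proposition on non-critical intervals, it suffices to treat the case $[a,b] = [c-\epsilon,\,c+\epsilon]$ for any sufficiently small $\epsilon > 0$.

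First I would invoke Lemma \ref{ml} to obtain coordinates $(x_1,\dots,x_n)$ on a neighborhood $U$ of $p$ in which $f = c - \xi + \eta$, where $\xi := x_1^2+\cdots+x_k^2$ and $\eta := x_{k+1}^2+\cdots+x_n^2$. Shrink $\epsilon$ so that $\overline{B_{2\epsilon}(p)} \subset U$ and $c$ is the only critical value in $[c-\epsilon,c+\epsilon]$. The natural candidate for the attached cell is $e^k := \{\xi \leq \epsilon,\ \eta = 0\}$, whose boundary sphere $\{\xi = \epsilon,\,\eta = 0\}$ lies in $f^{-1}(c-\epsilon)$, so $e^k$ is glued to $M_{c-\epsilon}$ along $\partial e^k \cong S^{k-1}$.

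Next I would construct a modified smooth function $F := f - \psi(\xi + 2\eta)$, where $\psi : [0,\infty) \to [0,\infty)$ is a bump satisfying $\psi(0) > \epsilon$, $\psi \equiv 0$ on $[2\epsilon,\infty)$, and $-1 < \psi' \leq 0$. A direct computation shows $\C(F) = \C(f)$, that $F \leq f$ everywhere, and that $F(p) < c - \epsilon$; hence $F^{-1}([c-\epsilon,c+\epsilon])$ contains no critical points, while $F^{-1}((-\infty,c+\epsilon]) = M_{c+\epsilon}$ since the modification only takes place where $f \leq c - \epsilon$. Applying the previous proposition to $F$ on $[c-\epsilon,c+\epsilon]$ yields a deformation retract of $M_{c+\epsilon}$ onto $F^{-1}((-\infty,c-\epsilon]) = M_{c-\epsilon} \cup H$, where $H$ is an explicit ``handle'' neighborhood of $e^k$ carved out by the bump $\psi$.

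Finally I would deformation retract $M_{c-\epsilon} \cup H$ onto $M_{c-\epsilon} \cup e^k$ by an explicit homotopy written in the $(\xi,\eta)$-coordinates: split $H$ into the three regions $\{\xi \leq \epsilon\}$, $\{\epsilon \leq \xi \leq \eta + \epsilon\}$, and $\{\xi \geq \eta + \epsilon\}$, push along $-\eta$-lines towards $\{\eta = 0\}$ in the first region (collapsing onto $e^k$), interpolate in the middle region, and keep the outer region fixed (where we are already in $M_{c-\epsilon}$). Composing the two homotopies gives the required equivalence. The main obstacle is the last step: one must verify that the piecewise deformation is continuous across the interfaces, is the identity on $M_{c-\epsilon}$, and actually lands in $M_{c-\epsilon} \cup e^k$, which requires checking level-set inequalities in each region. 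The construction of the bump $\psi$ with all three prescribed properties also needs some care, but is standard.
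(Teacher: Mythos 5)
Your proposal is correct and follows essentially the argument the paper itself points to: it does not prove this proposition but refers to Milnor \cite{Mi2}, and your plan is precisely Milnor's classical proof (quadratic normal form via Lemma \ref{ml}, the modified function $F=f-\psi(\xi+2\eta)$, the non-critical-interval retraction, and the three-region retraction of the handle onto $M_{c-\epsilon}\cup e^k$). The only slip is a cosmetic one of justification: the modification does \emph{not} take place only where $f\leq c-\epsilon$ (it happens on the ellipsoid $\xi+2\eta\leq 2\epsilon$, which contains the critical point where $f=c$); the equality $F^{-1}\big((-\infty,c+\epsilon]\big)=M_{c+\epsilon}$ instead follows from $F\leq f$ together with the estimate $f=c-\xi+\eta\leq c+\tfrac{1}{2}\xi+\eta\leq c+\epsilon$ on that ellipsoid, and with this fix the argument goes through unchanged.
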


This already implies the ``Morse inequalities'': 

\begin{theorem}[The Morse inequalities]
Let $b_k$ be the Betti numbers of $M$ and $c_k$ be the number of critical points of $f$ with index $k$, then for all $k=1, \ldots,n$: 
\begin{align*}
c_k - c_{k-1} + c_{k-2} - \dots +(-1)^k c_0 &\geq b_k -b_{k-1} + b_{k-2} - \dots + (-1)^k b_0, \\
\sum_{k=0}^n (-1)^k c_k &= \sum_{k=0}^n (-1)^k b_k = \chi(M).
\end{align*}
\end{theorem}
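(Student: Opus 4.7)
The plan is to follow the classical cell-attachment approach of Milnor, combining the two propositions above with a single algebraic lemma about Poincar\'e polynomials of triples.

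First, since $\C(f)$ is finite, the set of critical values is finite; order them $a_1 < \cdots < a_m$ and pick regular values $\alpha_0 < a_1 < \alpha_1 < \cdots < a_m < \alpha_m$. Applying the second proposition one critical point at a time in narrow slabs around each $a_i$ (separating simultaneous critical points by standard Morse-chart perturbations, or by a mild generalization of the proposition allowing several cells to be attached at once), $M_{\alpha_i}$ is homotopy equivalent to $M_{\alpha_{i-1}}$ with one $k$-cell attached for each critical point of index $k$ at level $a_i$. Excision together with the standard pair $(D^k,S^{k-1})$ then gives
\[
\dim_{\mathcal{R}} H_k(M_{\alpha_i},M_{\alpha_{i-1}}) \;=\; \#\{x\in\C(f)\colon f(x)=a_i,\ \mu(x)=k\},
\]
so that summing over $i$ recovers $c_k$.

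Next I introduce the Poincar\'e polynomial $P(X,Y)(t) := \sum_k \dim H_k(X,Y)\,t^k$ and prove the key algebraic lemma: for any triple $Z\subset Y\subset X$,
\[
P(X,Y)(t) + P(Y,Z)(t) - P(X,Z)(t) \;=\; (1+t)\,Q(t)
\]
for some polynomial $Q$ with non-negative integer coefficients. This is a direct rank computation on the long exact sequence of the triple, splitting each three-term segment into short exact sequences and identifying $Q$ with the generating polynomial of the connecting ranks $r_k := \operatorname{rank}\bigl(\partial\colon H_k(X,Y)\to H_{k-1}(Y,Z)\bigr)$. Iterating this subadditivity relation over the filtration $\emptyset = M_{\alpha_0}\subset \cdots\subset M_{\alpha_m}=M$ and using the relative computation above yields
\[
\sum_k c_k\, t^k \;-\; \sum_k b_k\, t^k \;=\; (1+t)\,R(t),
\]
with $R(t)=\sum_k r_k t^k$ having non-negative coefficients.

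Both assertions now fall out of this single identity. Matching coefficients gives $c_k - b_k = r_k + r_{k-1}$, and then the alternating partial sum $\sum_{j=0}^k (-1)^{k-j}(c_j-b_j)$ telescopes to $r_k\geq 0$, which is precisely the strong Morse inequality in degree $k$; setting $t=-1$ annihilates $(1+t)R(t)$ and produces $\sum_k(-1)^kc_k=\sum_k(-1)^kb_k=\chi(M)$. The main obstacle is the triple subadditivity lemma combined with the bookkeeping required when several non-degenerate critical points share a level; once either a small perturbation is invoked or the cell-attachment proposition is stated with multiple simultaneous cells, the filtration induction and the coefficient matching are purely formal.
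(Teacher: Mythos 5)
Your proposal is correct and is essentially the classical Milnor argument that the paper itself invokes: it states the two sublevel-set propositions and refers to Milnor for the proof, which is exactly your filtration-by-regular-values, cell-attachment, and $(1+t)$-subadditivity (Poincar\'e polynomial) argument. The only points to watch are bookkeeping ones you already flag — handling several critical points on one level and the index shift in identifying the coefficients of $(1+t)R(t)$ with the connecting ranks — neither of which is a gap.
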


These theorems already indicate the strong relationship between the topology of $M$ and the structure of $\C(f)$. Further investigations into the subject, mostly by Milnor, Thom, Smale and later Witten led to the notion of ``Morse homology''; for a brief history of this development see Bott \cite{Bott}. \\
We now present the gradient flow line approach to Morse homology using the theory of dynamical systems from Weber \cite{We} based on the ideas of Witten \cite{Wi}, excluding the concept of orientation, i.e. using only $\mathbb{Z}_2$-coefficients. The reason for this will be discussed later in Chapter \ref{emcoh}. 
Proofs of the following theorems can be found in Weber \cite{We} or in Chapter $4$, where we proof slight generalizations of these theorems by essentially the same methods.

There is no big difference in the construction of Morse homology and cohomology: Since Morse chains are elements of the free abelian group generated by the critical points of $f$ the map $ CM_*(f,g) \to CM^*(f,g), \ x \mapsto h_x$ with 
\begin{equation*}
h_x(y)=\begin{cases}
    1 & \text{if $y=x$ },\\
    0 & \text{ otherwise, }
  \end{cases}
\end{equation*}
is a bijection between the Morse chains and cochains and the differential differs only by a sign change in the equation for gradient flow lines, i.e. $V \to -V$ in \eqref{eq1} below.\\
Let $V:=\nabla_g f$ be the gradient vector field of $f$ and consider the dynamical system:
\begin{align}
\dot{u}(t)=&V(u(t)), \label{eq1} \\
u(0)=&u_0 \in M. \nonumber
\end{align}

$M$ is closed, therefore solutions of \eqref{eq1} are defined for all $t \in \mathbb{R}$ and since the zeros of $V$ are precisely the critical points of $f$ the limits $u^-:= \lim_{t \to -\infty}u(t)$ and $u^+:= \lim_{t \to \infty}u(t)$ are elements of $\C(f)$. Furthermore, $u^- \neq u^+ $ because $V=\nabla_g f$ and therefore $f$ is strictly increasing along $u$.

\begin{theorem}[Stable Manifold Theorem]
Let $\Phi: M \times \mathbb{R} \to M,\ (p,t) \mapsto \Phi_t(p)$ denote the flow of $V$. Then for $x \in \C(f)$ the \textit{(un-)stable manifolds of $x$}, defined by 
\begin{align*}
W^u(x)&:=\{ q \in M | \lim_{t \to -\infty} \Phi_t(q)=x \}, \\
W^s(x)&:=\{ q \in M | \lim_{t \to \infty} \Phi_t(q)=x \}.
\end{align*}
are open submanifolds of $M$, diffeomorphic to $\mathbb{R}^{n-\mu(x)}$ and $\mathbb{R}^{\mu(x)}$, respectively. 
\end{theorem}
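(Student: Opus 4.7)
The plan is to reduce the global statement to a local normal form near $x$ and then propagate it by the flow. First I would analyze the linearization of $V = \nabla_g f$ at $x$. Using $g$ to identify $T_xM$ with its dual, the derivative $DV(x)$ corresponds to the Hessian $D^2 f(x)$, which is symmetric and, because $f$ is Morse, non-singular. Hence $DV(x)$ is $\mathbb{R}$-diagonalizable with $\mu(x)$ negative and $n-\mu(x)$ positive eigenvalues, yielding a $DV(x)$-invariant splitting $T_x M = E^s \oplus E^u$ with $\dim E^s = \mu(x)$ and $\dim E^u = n - \mu(x)$. In particular $x$ is a hyperbolic fixed point of $\Phi$.

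Second, I would produce local (un)stable manifolds. The most geometric route is to combine the Morse Lemma~\ref{ml} with a comparison estimate: in Morse coordinates $(p^i)$ around $x$, $f$ is a non-degenerate quadratic form whose gradient with respect to the Euclidean background metric is exactly the linear hyperbolic vector field $A p$ with $A = \mathrm{diag}(-2,\dots,-2,+2,\dots,+2)$; its stable and unstable subspaces are the coordinate subspaces $\{p^j = 0 : j > \mu(x)\}$ and $\{p^i = 0 : i \le \mu(x)\}$. The actual vector field $\nabla_g f$ differs from $Ap$ by a term that is $O(|p|^2)$, so the classical Hadamard–Perron stable manifold theorem (contraction on a space of graphs, as in Weber~\cite{We}) applies to $\Phi$ near $x$ and furnishes smoothly embedded local manifolds $W^s_{\mathrm{loc}}(x)$ and $W^u_{\mathrm{loc}}(x)$, tangent at $x$ to $E^s$ and $E^u$, diffeomorphic to open disks of dimensions $\mu(x)$ and $n-\mu(x)$, and characterized as the points whose forward (resp.\ backward) orbit stays in a fixed small ball and converges to $x$.

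Third, I would globalize. Since along any non-constant trajectory $f$ is strictly monotone, no such trajectory is recurrent; hence a point $q$ lies in $W^s(x)$ iff its forward orbit eventually enters the local stable manifold, giving
\begin{equation*}
W^s(x) \;=\; \bigcup_{t \le 0} \Phi_t\bigl(W^s_{\mathrm{loc}}(x)\bigr), \qquad W^u(x) \;=\; \bigcup_{t \ge 0} \Phi_t\bigl(W^u_{\mathrm{loc}}(x)\bigr).
\end{equation*}
Each $\Phi_t$ is a diffeomorphism of $M$, so $W^s(x)$ and $W^u(x)$ inherit smooth submanifold structures. Writing them as nested exhaustions $W^s(x) = \bigcup_{n\in\mathbb{N}} \Phi_{-n}(W^s_{\mathrm{loc}}(x))$ of open sets diffeomorphic to $D^{\mu(x)}$, one obtains a direct-limit diffeomorphism $W^s(x) \cong \mathbb{R}^{\mu(x)}$, and symmetrically $W^u(x) \cong \mathbb{R}^{n-\mu(x)}$.

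The main obstacle is step two: the existence of a smooth local invariant manifold of the correct dimension tangent to the hyperbolic subspace. This is the real content of the theorem and requires either a contraction-mapping argument of Hadamard–Perron type on a Banach space of local sections, or a Lyapunov-function argument exploiting the gradient structure (the monotonicity of $f$ forces forward-bounded orbits to converge and controls the domain of convergence). Once the local statement is secured, both the global structure and the diffeomorphism type follow formally from the fact that $\Phi_t$ is a diffeomorphism for every $t$.
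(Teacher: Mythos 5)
Your proposal is correct and follows essentially the same route the paper relies on: the thesis does not prove this theorem itself but defers to Weber \cite{We} (and to the Chapter 4 generalizations proved "by essentially the same methods"), namely hyperbolicity of $DV(x)$ coming from the non-degenerate Hessian, a Hadamard--Perron/graph-transform construction of the local (un)stable manifolds, and globalization by the flow, with the monotone union of disks giving the diffeomorphism types $\mathbb{R}^{\mu(x)}$ and $\mathbb{R}^{n-\mu(x)}$. One cosmetic remark: in Morse coordinates $\nabla_g f$ differs from the linear field $Ap$ by $O(|p|^2)$ only if $g$ is Euclidean at $x$ in those coordinates, but this does not matter since your first step already establishes hyperbolicity of the linearization, which is all the local theorem needs.
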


\begin{remark} \label{tangent}
For $x\in \C(f)$ there is the following description of the tangent space $T_xW^i(x) \ (i=u,s)$: Since $x$ is a non-degenerate critical point, $T_xM$ splits into $E^u \oplus E^s$ consisting of the eigenvectors associated to negative and positive eigenvalues of $H(f)(x)$. Moreover, the flow of $D\nabla_g f(x)$ viewed as a linear vector field on $T_xM$ is given by the linear map $A_t:=D\Phi_t(x)= \exp{(t D\nabla f(x))}$ and if $\lambda$ is an eigenvalue of $H(f)(x)$, then $e^{\lambda t}$ is an eigenvalue of $A_t$ and both have the same eigenspaces. Hence $E^i=T_xW^i(x)$. Furthermore, the spaces $E^i$ are invariant under $A_t$ and $A_t$ is a strict contraction on $E^s$ and a strict dilatation on $E^u$.
\end{remark}

The union \smash{$\bigcup_{x \in \C(f)} W^i(x)$} forms an open cover of $M$. So understanding the closure of each $W^i(x)$ (i.e. knowing the attaching maps) would give a cell decomposition of $M$ from which the homology of $M$ could be computed. Unfortunately in general their closure is quite complicated and singular (indicating that there is information about the topology of $M$ encoded). 

On the other hand, the manifolds $W^i(x)$ are also used to describe the spaces of solutions of equation \eqref{eq1} which flow from one critical point to another:

\begin{definition}
For $x,y \in \C(f)$ define 
$$
\M(x,y):=W^u(x) \cap W^s(y),
$$
the \textit{connecting space of $x$ and $y$} consisting of all flow lines emanating at $x$ and ending at $y$.
\end{definition}

Observe that there is a free $\mathbb{R}$-action on $\M(x,y)$ given by shifting $u(t)$ to $u(t+\tau)$ for $\tau \in \mathbb{R}$.

\begin{definition}
The \textit{moduli space of flow lines from $x$ to $y$} is defined as
$$
\widetilde{\M}(x,y):=\M(x,y) / \mathbb{R}.
$$
\end{definition}

\begin{remark}
Equivalently one could define $\widetilde{\M}(x,y)=\M(x,y) \cap f^{-1}(a)$, where $a \in (f(x),f(y))$ is a regular value and the identification of moduli spaces associated to different regular values is provided by the flow.
\end{remark}

\begin{theorem} For a generic pair $(f,g)$ all stable and unstable manifolds intersect transversally, so that $\M(x,y)$ and $\widetilde{\M}(x,y)$ are $(\mu(y)-\mu(x))$- and $(\mu(y)-\mu(x)-1)$-dimensional submanifolds without boundary of $M$.
Such a pair $(f,g)$ is called \textit{Morse-Smale}.
\end{theorem}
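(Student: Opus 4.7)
The plan is to prove the Morse--Smale property by a parametric Sard--Smale argument, perturbing the metric $g$ while holding $f$ fixed, and then read off the dimensions of $\M(x,y)$ and $\widetilde{\M}(x,y)$ from the standard transverse-intersection count. First I would take $\mathcal{G}$ to be a separable Banach manifold of $C^k$ metrics near a fixed reference metric (for $k$ large), obtain the result on a residual subset of $\mathcal{G}$, and then pass to a residual set of smooth metrics by the usual Baire argument.

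The core is a universal moduli space construction. For each ordered pair $x, y \in \C(f)$, introduce a weighted $W^{1,p}$ path space $\mathcal{P}(x, y)$ of curves $u : \mathbb{R} \to M$ converging exponentially to $x$ at $-\infty$ and to $y$ at $+\infty$, and consider the section
\begin{equation*}
F : \mathcal{G} \times \mathcal{P}(x, y) \longrightarrow \mathcal{E}, \qquad F(g, u) = \dot{u} - \nabla_g f(u),
\end{equation*}
of the natural $L^p$ Banach bundle $\mathcal{E}$. For fixed $g$ the vertical linearization $D_u F$ is Fredholm of index $\mu(y) - \mu(x)$, as computed via the spectral flow and the description of the asymptotic Hessians in Remark \ref{tangent}. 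The key transversality lemma asserts that the full linearization $DF$ is surjective at every zero: given a putative cokernel element $\xi$, unique continuation for the linearized flow forces $\xi$ to be nonzero on some open interval of times at which $u$ is noncritical, whence a bump-supported perturbation $\delta g$ of the metric localized near such a regular point pairs nontrivially with $\xi$, giving a contradiction.

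Surjectivity makes $\widehat{\M}(x, y) := F^{-1}(0)$ a Banach manifold and the projection $\pi : \widehat{\M}(x, y) \to \mathcal{G}$ a Fredholm map of the same index $\mu(y) - \mu(x)$. Sard--Smale then provides a residual set of regular values $g \in \mathcal{G}$, and for any such $g$ the fiber $\M(x, y)$ is a smooth manifold of dimension $\mu(y) - \mu(x)$; equivalently $W^u(x) \pitchfork W^s(y)$ at every intersection point, whence the dimension count also follows directly from the Stable Manifold Theorem via $(n - \mu(x)) + \mu(y) - n = \mu(y) - \mu(x)$. Intersecting over the finite set $\C(f) \times \C(f)$ and invoking the standard $C^k$-to-$C^\infty$ Baire passage delivers the desired generic set of Morse--Smale pairs. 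The free $\mathbb{R}$-action on $\M(x, y)$ by time translation is free and proper (since $f$ is strictly monotone along every non-constant trajectory and $x \neq y$), so $\widetilde{\M}(x, y)$ inherits a smooth structure of dimension $\mu(y) - \mu(x) - 1$. Neither moduli space has boundary, since the ODE is posed on all of $\mathbb{R}$ with only the asymptotic constraints already built into $\mathcal{P}(x, y)$.

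The principal obstacle is the transversality lemma itself, namely showing that the metric direction $D_g F$ hits everything missed by $D_u F$; this step combines a unique-continuation argument with an explicit construction of metric bump perturbations and is the only substantive analytic input. Everything else reduces to routine applications of the implicit function theorem and the Sard--Smale theorem.
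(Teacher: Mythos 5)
Your argument is correct, but it is a genuinely different route from the one this thesis takes. Here the theorem is quoted from Smale's finite-dimensional perturbation result (perturb $f$ or the vector field $V$ in $C^1$, or equivalently the metric), and the generalization actually proved in Chapter 4 follows Nicolaescu/Milnor: between two regular level sets one composes the level-set flow with an ambient isotopy, produced by the finite-dimensional Sard theorem applied in a tubular neighbourhood, so that the unstable spheres become transverse to the stable spheres, and one then modifies the gradient-like vector field on that collar to realize the isotopy, iterating over the critical points ordered by critical value. You instead run the Floer-theoretic scheme: a universal moduli space $F(g,u)=\dot u-\nabla_g f(u)$ over a Banach manifold of metrics, Fredholmness of $D_uF$ of index $\mu(y)-\mu(x)$ (your sign matches the paper's positive-gradient, cohomological convention), surjectivity of the universal linearization via unique continuation for the adjoint ODE plus a $g$-symmetric bump perturbation supported near a regular point of the trajectory (note the support should be squeezed between nearby level sets so the trajectory meets it in a single time interval, and for $v\neq 0$ every target is of the form $Av$ with $A$ symmetric, so the pointwise perturbations do span), then Sard--Smale and the Taubes-style passage from $C^k$ to $C^\infty$, with the identification of regular fibers with $W^u(x)\pitchfork W^s(y)$ giving the submanifold statement and the free, proper $\mathbb{R}$-action giving $\widetilde{\M}(x,y)$. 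What each approach buys: yours keeps $f$ fixed and produces an honest gradient pair $(f,g)$, and it is the formulation that transfers verbatim to Floer theory; the paper's method stays entirely finite-dimensional and elementary (Sard in $\mathbb{R}^m$, tubular neighbourhoods, explicit isotopies), which is precisely the dynamical-systems philosophy announced in the introduction and, more importantly, is what is reused in Chapter 4 to handle the Morse--Bott functions $F_k$ on $W_k$, at the cost of replacing the gradient by a merely gradient-like vector field. The only points worth spelling out in your write-up are the single-crossing localization of the metric bump just mentioned and the standard equivalence between surjectivity of $D_uF$ along $u$ and transversality of $W^u(x)$ and $W^s(y)$ at the corresponding intersection points; with those included your proof is complete.
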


This theorem is due to Smale. In \cite{Sm} he showed that either $f$ or the vector field $V$ can be $C^1$ approximated by $\tilde{f}$ or $\tilde{V}$ respectively to obtain transversality of all intersections. Equivalently one could also change the metric $g$ - this is not true for Morse-Bott functions: cf. Latschev \cite{Lat}, where some counter-examples are presented. For a discussion how the change of $g$ and $\nabla_g f$ is related see Wall \cite{Wall}.\\
Hence, for $\mu(x)=\mu(y)-1$ the moduli space $\widetilde{\M}(x,y)$ is $0$-dimensional. Moreover, it is compact and therefore a finite set. For arbitrary $x,y \in \C(f)$ all higher-dimensional moduli spaces are naturally compactifiable. The compactification of $\widetilde{\M}(x,y)$ is given by adding so-called ``broken flow lines'' to $\widetilde{\M}(x,y)$:

\begin{definition} \label{broken}
For $p \in M$ let \smash{$ O(p):=\bigcup_{t\in\mathbb{R}}\Phi_t(p)$} be the flow line through $p$. 
A sequence $(u_k)_{k\in \mathbb{N}} \subset \widetilde{\M}(x,y)$ \textit{converges to a broken flow line $(v_1, \dots , v_l)$ of order $l$} iff there exist critical points $x_0=x, x_1, \dots , x_{l-1}, x_l=y$ such that 
\begin{gather*}
v_i \in \M(x_{i-1},x_i)\ \forall i=1,\ldots,l\\
\text{and} \\
O(u_k) \to v_1 \cup \dots \cup v_l \text{ as } k \to \infty,
\end{gather*}
where convergence with relation to the Riemannian distance $d$ is meant.

A subset $K \subset \widetilde{\M}(x,y)$ is called \textit{compact up to broken flow lines} iff every sequence $(u_k)_k \subset \widetilde{\M}(x,y)$ possesses a subsequence converging in the above sence.
\end{definition}

\begin{theorem} \label{mcomp}
For $x,y \in \C(f)$ and a Morse-Smale pair $(f,g)$, $\widetilde{\M}(x,y)$ is compact up to broken flow lines of order at most $\mu(y)-\mu(x)$.
\end{theorem}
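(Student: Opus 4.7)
I would fix a regular value $a\in(f(x),f(y))$ and use the free $\mathbb{R}$-action on $\M(x,y)$ to pick representatives $u_k$ of the classes in $\widetilde{\M}(x,y)$ with $u_k(0)\in f^{-1}(a)$. Since $M$ is closed, a subsequence satisfies $u_k(0)\to p\in f^{-1}(a)$. Continuous dependence of the flow $\Phi_t$ on initial data gives $u_k(t)\to v(t):=\Phi_t(p)$ uniformly on every compact time interval. As noted in the discussion of \eqref{eq1}, $v^{\pm}:=\lim_{t\to\pm\infty}v(t)$ lie in $\C(f)$, and monotonicity of $f$ along the flow forces $f(x)\leq f(v^{-})<f(v^{+})\leq f(y)$.

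\textbf{Recovering missing pieces by reparametrization.} If $v^{-}=x$ and $v^{+}=y$, we are done with an unbroken limit ($l=1$). Otherwise, suppose $v^{-}\neq x$, hence $f(v^{-})>f(x)$. Choose a regular value $b\in(f(x),f(v^{-}))$ and define $\tau_k<0$ by $f(u_k(\tau_k))=b$; uniform convergence on compact sets together with the fact that $v$ spends unbounded backward time near $v^{-}$ forces $\tau_k\to-\infty$. Applying the extraction argument of the first paragraph to $\tilde u_k(t):=u_k(t+\tau_k)$ yields a new limit flow line $w\in\M(w^{-},w^{+})$ with $f(w^{+})\leq f(v^{-})$ and $f(w^{-})<b$. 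The symmetric construction on the $t\to+\infty$ side, if $v^{+}\neq y$, produces a limit on the upper side. Iterating, the $f$-values of successive endpoints strictly descend (resp.\ ascend) into the finite set $\C(f)$, so the process terminates after finitely many steps, yielding a sequence $v_1,\dots,v_l$ with $v_i\in\M(x_{i-1},x_i)$ and $x_0=x$, $x_l=y$.

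\textbf{Orbit convergence and the index bound.} To upgrade this to Hausdorff convergence $O(u_k)\to v_1\cup\dots\cup v_l$ with respect to $d$, I would cover $[u_k(0)]_{\mathbb R}$ by the finitely many reparametrization windows used above, together with small neighborhoods $U_i\ni x_i$ of the intermediate critical points. On the time intervals mapping into the $U_i$ the orbit $O(u_k)$ shadows $x_i$ arbitrarily closely because, by Remark \ref{tangent}, $A_t=D\Phi_t(x_i)$ is a strict contraction on $E^s$ and strict dilatation on $E^u$, so $u_k$ enters along $W^s(x_i)$, spends arbitrarily long time in $U_i$, and exits along $W^u(x_i)$; this is exactly the content of the $\lambda$-lemma. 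Outside the $U_i$, the reparametrized subsequences converge uniformly to the pieces $v_i$. Finally, the Morse-Smale condition forces each non-empty $\widetilde{\M}(x_{i-1},x_i)$ to have non-negative dimension $\mu(x_i)-\mu(x_{i-1})-1$, so $\mu$ is strictly increasing along the chain, giving $l\leq\mu(y)-\mu(x)$.

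\textbf{Main obstacle.} The subtle step is not the bookkeeping of the iterated extraction but the exclusion of \emph{hidden oscillations}: a priori, a tail of $u_k$ could enter a small neighborhood of an intermediate $x_i$, leave, and re-enter, producing additional orbit mass not accounted for by the $v_i$. Ruling this out is the real dynamical content of the theorem and is handled via the linearization at each hyperbolic fixed point described in Remark \ref{tangent}: once a trajectory enters a sufficiently small neighborhood of $x_i$ along $W^s(x_i)$, the forward flow must leave along $W^u(x_i)$ and cannot return, because $f$ strictly increases along the flow and recurrence to $U_i$ would force $f$-oscillations contradicting monotonicity. I would follow the Weber-style dynamical-systems route (estimates on $A_t$ on $E^{u,s}$) rather than the $L^2$-compactness arguments of Floer-type Morse theory, consistent with the paper's stated philosophy.
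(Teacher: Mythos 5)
Your proposal is correct and follows essentially the same route as the paper, which itself defers this theorem to Weber's dynamical-systems proof and to the generalized compactness argument of Chapter 4: representatives on a level set, subsequence extraction with reparametrization at intermediate regular values, the linearization/$\lambda$-lemma analysis at intermediate critical points (the paper's Hartman--Grobman contraction--dilatation claim), and Morse--Smale transversality for the bound $l\leq\mu(y)-\mu(x)$. The one caution is that the matching of endpoints of successive pieces and the no-return property near an intermediate $x_i$ really come from that local hyperbolic analysis (re-entry into $U_i$ does not by itself contradict monotonicity of $f$), which is precisely how the paper's proof organizes the argument, so your third paragraph rather than the level-iteration bookkeeping is carrying that load.
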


Hence, for $k=\mu(y)-\mu(x)$ we have the following description for the topological boundary of the connected components of the compactified moduli space $\widetilde{\M}(x,y)$:
\begin{equation*}
\partial \widetilde{\M}(x,y)= \bigcup_{\substack{z_1, \dots ,z_{k-1} \in \C(f) \\ z_i\neq z_j } } \widetilde{\M}(x,z_1) \times \widetilde{\M}(z_1,z_2) \times \ldots \times \widetilde{\M}(z_{k-1},y).
\end{equation*}
There is an opposite concept to convergence to broken flow lines called gluing: Given a flow line from $x$ to $y$ and one from $y$ to $z$ the gluing map $\Diamond_{\rho}$ produces a flow line in the higher-dimensional moduli space $\widetilde{\M}(x,z)$:

\begin{theorem}
Let $(f,g)$ be Morse-Smale and $x,y,z \in Crit(f)$, with Morse indices $\mu(x)=k=\mu(y)-1=\mu(z)-2$. There is $\rho_0 > 0$ and a map

\begin{equation*}
\Diamond: \widetilde{\M}(x,y) \times [\rho_0,\infty) \times \widetilde{\M}(y,z) \to \widetilde{\M}(x,z), \ (u,\rho,v) \mapsto u\Diamond_{\rho}v, 
\end{equation*}
such that
\begin{equation*}
u\Diamond_{\rho}v \to (u,v) \text{ for } \rho \to \infty 
\end{equation*}
and no other sequence in $\widetilde{\M}(x,z) \setminus u\Diamond_{[\rho_0,\infty)}v$ converges to $(u,v)$.
\end{theorem}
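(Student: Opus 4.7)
The plan is to construct the glued flow line by exploiting the local linear picture of the flow near $y$ together with Palis' $\lambda$-lemma (inclination lemma), and then to verify the uniqueness statement by running essentially the same analysis in reverse.

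First I would set up local coordinates near $y$: by the Stable Manifold Theorem together with Remark~\ref{tangent}, the flow on a small box $U$ around $y$ is $C^1$-conjugate to its linearization $A_t=\exp(tD\nabla_g f(y))$, which is a strict contraction on $T_yW^s(y)$ and a strict dilatation on $T_yW^u(y)$. I would fix an incoming transverse section $\Sigma^s\subset\partial U$ (say a small piece of the level set $f^{-1}(f(y)-\varepsilon)$) meeting $W^s(y)$ on the ``lower'' boundary of $U$, and similarly an outgoing section $\Sigma^u\subset\partial U$ at level $f(y)+\varepsilon$ meeting $W^u(y)$. After harmless time-shifts, $u$ crosses $\Sigma^s$ at a unique point $p_u\in W^s(y)\cap\Sigma^s$ and $v$ crosses $\Sigma^u$ at a unique point $p_v\in W^u(y)\cap\Sigma^u$.

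Next I would apply the $\lambda$-lemma to the disk $D:=W^u(x)\cap\Sigma^s$ through $p_u$, which by the Morse-Smale assumption meets $W^s(y)$ transversally at $p_u$ inside $M$. The $\lambda$-lemma asserts that as $\rho\to\infty$ the image $\Phi_\rho(D)$ $C^1$-accumulates onto $W^u(y)$ on compact subsets of $U$; in particular $\Phi_\rho(D)\cap\Sigma^u$ becomes $C^1$-close to $W^u(y)\cap\Sigma^u$. Since $W^u(y)$ meets $W^s(z)$ transversally near $v$ (Morse-Smale again), the intersection $W^u(y)\cap W^s(z)\cap\Sigma^u=\{p_v\}$ is cut out cleanly inside $\Sigma^u$, and transversality being a $C^1$-open condition, the implicit function theorem produces for each $\rho\geq\rho_0$ sufficiently large a unique point $q_\rho\in\Phi_\rho(D)\cap W^s(z)\cap\Sigma^u$ close to $p_v$, depending smoothly on $\rho$ and on the data $(u,v)$. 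I then define $u\Diamond_\rho v\in\widetilde{\M}(x,z)$ to be the flow line through $q_\rho$; smooth dependence of $q_\rho$ on parameters yields smoothness of $\Diamond$.

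Convergence $u\Diamond_\rho v\to(u,v)$ is then immediate from the construction: the glued orbit follows $u$ until it enters $U$ near $p_u$, spends transit time of order $\rho$ inside the linearized saddle, and exits near $p_v$ to follow $v$, so $O(u\Diamond_\rho v)$ Hausdorff-converges to $O(u)\cup O(v)$ in the sense of Definition~\ref{broken}. For the ``no other sequence'' clause I would argue by contradiction: any sequence $w_n\in\widetilde{\M}(x,z)$ converging to $(u,v)$ must eventually enter $U$ through $\Sigma^s$ at a point in $W^u(x)\cap\Sigma^s$ arbitrarily close to $p_u$, spend a transit time $\rho_n\to\infty$ inside $U$, and exit through $\Sigma^u$ arbitrarily close to $p_v\in W^s(z)$; the uniqueness of $q_{\rho_n}$ already established in the construction forces $w_n=u\Diamond_{\rho_n}v$ for $n$ large. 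The main obstacle is the passage from $C^0$- to $C^1$-proximity in the $\lambda$-lemma step: transversality of $\Phi_\rho(D)$ with $W^s(z)$ requires control of tangent planes, not just of base points, and the requisite exponential estimates coming from the strict contraction/dilatation of $A_t$ in Remark~\ref{tangent} are the technical heart of the finite-dimensional proof following Weber~\cite{We}.
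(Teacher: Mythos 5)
Your argument is correct and is essentially the paper's own route (worked out in Section~4.3 in the more general equivariant setting, following Weber): a local model around the intermediate critical point, the $\lambda$-Lemma to gain $C^1$-control of the forward-flowed unstable disk, a unique intersection point near the broken orbit defining $u\Diamond_{\rho}v$, and that uniqueness yielding both the convergence and the ``no other sequence'' statements. The only deviations are minor and harmless: the paper intersects a forward-flowed disk in $W^u(x)$ with a backward-flowed disk in $W^s(z)$ and locates the unique point via a contraction and the Banach fixed point theorem, rather than your one-sided implicit-function-theorem argument against $W^s(z)$ on an exit section, and it does not linearize the flow at $y$ --- your claimed $C^1$-conjugacy to the linearization is neither needed nor justified in general (hyperbolic saddles need not be $C^1$-linearizable); the paper only straightens the local stable and unstable manifolds by a smooth coordinate change, since the $\lambda$-Lemma applies directly to the nonlinear flow.
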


The last statement in the theorem above is crucial: Together with the Compactness Theorem this shows that for index difference $\mu(y)-\mu(x)=2$ the broken flow lines of order $2$ passing intermediate critical points $z_i$ are precisely the boundary components of the (compactification of the) one-dimensional manifold $\widetilde{\M}(x,y)$.
\begin{definition}[The Morse(-Thom-Smale-Witten) complex]
Let $CM^*(f,g)$ be the free $\mathbb{Z}_2$-module generated by $\C(f)$. Define a differential operator on a generator $x\in \C(f)$ by
\begin{equation} \label{md}
dx:=\sum_{\mu(y)=\mu(x)+1}n(x,y)y,
\end{equation}
where $n(x,y)$ is given by
$$
n(x,y):=| \{u\in \widetilde{\M}(x,y) \} | \bmod{2}
$$
and extend it to general cochains in $\mathbb{Z}_2 {\langle \C(f) \rangle }$ by linearity.
\end{definition}

\begin{theorem}
$d^2=0$.
\end{theorem}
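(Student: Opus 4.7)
The plan is to compute $d^2 x$ for a generator $x \in \C(f)$ and show that every coefficient vanishes modulo $2$. Expanding the definition,
\begin{equation*}
d^2 x = \sum_{\mu(z)=\mu(x)+2} \Bigl( \sum_{\mu(y)=\mu(x)+1} n(x,y)\, n(y,z) \Bigr) z,
\end{equation*}
so it suffices to fix $x, z \in \C(f)$ with $\mu(z) - \mu(x) = 2$ and show that the inner sum is even. The idea, standard for the Morse--Thom--Smale--Witten complex, is to identify this sum with the number of boundary points of a compact $1$-manifold, which is automatically even.

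The key geometric input is the pair of previously stated theorems. By the Morse--Smale hypothesis, $\widetilde{\M}(x,z)$ is a $1$-dimensional manifold without boundary. By the Compactness Theorem \ref{mcomp}, it admits a compactification $\overline{\widetilde{\M}(x,z)}$ obtained by adjoining broken flow lines of order at most $2$; order-$2$ breaks are precisely pairs $(u,v)$ with $u \in \widetilde{\M}(x,y)$, $v \in \widetilde{\M}(y,z)$ for some intermediate $y \in \C(f)$ with $\mu(y) = \mu(x)+1$ (the index-$2$ break is forced on us because $f$ strictly increases along flow lines, so all $z_i$ must be distinct and the indices are squeezed). The Gluing Theorem then provides, for each such pair, an embedded half-open arc $u \Diamond_{[\rho_0,\infty)} v$ in $\widetilde{\M}(x,z)$ converging to $(u,v)$, and its last statement guarantees that no other sequence in the moduli space limits to $(u,v)$. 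Together these say that $\overline{\widetilde{\M}(x,z)}$ is a compact topological $1$-manifold with boundary, and its boundary is in bijection with
\begin{equation*}
\bigsqcup_{\mu(y) = \mu(x)+1} \widetilde{\M}(x,y) \times \widetilde{\M}(y,z).
\end{equation*}

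Since each factor $\widetilde{\M}(x,y)$ and $\widetilde{\M}(y,z)$ is a compact $0$-manifold (hence a finite set) of cardinality $n(x,y)$ and $n(y,z)$ respectively, the total number of boundary points of $\overline{\widetilde{\M}(x,z)}$ equals $\sum_{\mu(y)=\mu(x)+1} n(x,y)\, n(y,z)$. Because the boundary of a compact $1$-manifold consists of an even number of points (the connected components are circles or closed intervals), this sum is $0 \bmod 2$, which is exactly the coefficient of $z$ in $d^2 x$. Hence $d^2 = 0$.

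The main obstacle in turning this sketch into a proof is the claim that $\overline{\widetilde{\M}(x,z)}$ is genuinely a manifold with boundary near the glued ends: one needs the gluing map $\Diamond$ to be an open embedding onto a collar of each boundary point, and one needs to know that no additional limit points appear beyond the order-$2$ breaks. Both of these are precisely the content of the stated gluing and compactness theorems (the ``no other sequence'' clause supplies injectivity/uniqueness of the collar), so modulo invoking those, the argument is purely combinatorial: boundary of a compact $1$-manifold has even cardinality.
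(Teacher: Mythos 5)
Your proposal is correct and follows essentially the same route as the paper: both identify the coefficient of $z$ in $d^2x$ with the number of boundary points of the compactified one-dimensional moduli space $\widetilde{\M}(x,z)$, using the Compactness Theorem to supply the order-$2$ broken flow lines and the final clause of the Gluing Theorem to guarantee that these and only these account for the boundary, and then conclude by the mod-$2$ parity of the boundary of a compact $1$-manifold. The only difference is presentational: you spell out the manifold-with-boundary structure near the glued ends slightly more explicitly than the paper does, but no new idea is involved.
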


\begin{proof}
By definition $d^2x$ is given by
\begin{equation*}
d^2x = \sum_{\mu(z)=\mu(y)+1}n(y,z)\sum_{\mu(y)=\mu(x)+1}n(x,y)z.
\end{equation*}

The last statement in the Gluing Theorem shows that this equals summing over the boundary components of the compactification of $\widetilde{\M}(x,z)$:

\begin{gather*}
d^2x=\sum_z \sum_y  \big(| \{u\in \widetilde{\M}(x,y) \} |\cdot |\{v\in \widetilde{\M}(y,z)\}| \bmod{2}\big) z \\
=\sum_z \big(\sum_{(u,v)\in \partial \widetilde{\M}(x,z)}1\bmod{2}\big) z =0,
\end{gather*}
because every $1$-dimensional manifold without boundary is diffeomorphic either to $(0,1)$ or $S^1$ and therefore
the number of its boundary components is always zero modulo $2$.
\end{proof}

\begin{remark}
If one is working with $\mathbb{Z}$-coefficients, then every flow line $u \in \widetilde{\M}(x,y)$ is counted with a sign given by comparing the orientations of $\dot{u}$ and $\widetilde{\M}(x,y)$ (here the moduli spaces inherit induced orientations as transversal intersections of the orientable and coorientable submanifolds $W^u(x)$ and $W^s(y)$). In this case $d^2=0$ holds because the boundary components of $\widetilde{\M}(x,z)$ come with alternating signs.
\end{remark}

Finally we conclude that $\big( CM^*(f,g),d \big) $ is a cochain complex. That the homology of this complex is an invariant of $M$ can be seen either by relating it to another invariant of $M$, say singular cohomology (see Hutchings \cite{Hu}), or by showing that it is independent of the involved data $(f,g)$. The latter idea is worked out in detail in Weber \cite{We} using a ``continuation map'', a cochain map between two Morse complexes associated to different input data which induces a canonical isomorphism on cohomology:

Let $(f_0,g_0)$ and $(f_1,g_1)$ be two Morse-Smale pairs and let $(f_s,g_s)$ be a homotopy between them. Use a help function $h:[0,1]\to \mathbb{R}$ with critical set $\{0,1\}$ and $\mu(0)=0$, $\mu(1)=1$, to obtain a Morse function $F(s,p)=f_s(p) + h(s)$ on $[0,1]\times M$ (cf. Remark \ref{just}). Counting flow lines of $\nabla_{1\oplus g_s} F$ from $\{0\}\times \C(f_0)$ to $\{1\}\times \C(f_1)$ produces a cochain map $P(f_s):CM^*(f_0,g_0)\to CM^*(f_1,g_1)$ with the following properties:
\\

1. A generic homotopy of homotopies between $f_s$ and another homotopy $f_s'$ induces a cochain homotopy
\begin{gather}
H:CM^k(f_0,g_0)\to CM^{k-1}(f_1,g_1),  \\
dH + Hd = P(f_s) - P(f_s').
\end{gather}

2. If $(f_s',g_s')$ is a homotopy from $(f_1,g_1)$ to $(f_0,g_0)$, then $H(f_s')\circ H(f_s)$ is cochain homotopic to the identity.

3. If $(f_s,g_s)$ is the constant homotopy, then $H(f_s)$ is the identity on cochains.
\\

These properties imply, that $CM^*(f_0,g_0)$ and $CM^*(f_1,g_1)$ are canonically isomorphic (cf. Hutchings \cite{Hu}).

\subsection{Morse-Bott theory}
\label{mbt}

\subsubsection{General Morse-Bott theory}
\label{gmbt}

Morse-Bott theory is a generalization of Morse theory to functions $f$ where $Df$ is allowed to vanish along submanifolds of $M$ while the non-degeneracy condition still holds on their normal bundle. 
For the construction of the equivariant Morse complex we will need some Morse-Bott theory. Therefore we introduce it here quickly. For a detailed exposition we refer to Banyaga and Hurtubise \cite{BH}.
 
\begin{definition}
A smooth function $f: M \to \mathbb{R}$ is called \textit{Morse-Bott}, iff $\C(f)$ is a finite disjoint union of connected submanifolds of $M$, such that on the normal bundle of every $C \subset \C(f)$ the Hessian matrix of $f$ is non-degenerate.
\end{definition}

The non-degeneracy condition implies that the Morse index $\mu(x)$ for $x\in C$ is constant on a connected critical submanifold. So the Morse index $\mu(C)$ of a critical submanifold is well defined. 
\\

As in Morse theory there is a nice description of $f$ near critical submanifolds:

\begin{lemma}[Morse-Bott Lemma] \label{mbl}
Let $f,C$ be as above and $x\in C$: There exist local coordinates around $x$ and a local splitting of the normal bundle of $C$
\begin{equation*}
\mathcal{V}(C)=\mathcal{V}^u \oplus \mathcal{V}^s,
\end{equation*}
such that, if we identify $p\in M$ with $(v^0,v^u,v^s)$ in the local coordinate system, then $f$ is given by
\begin{equation*}
f(p) = f(v^0,v^u,v^s) = f(C) - \|v^u\|^2 + \|v^s\|^2.
\end{equation*}
\end{lemma}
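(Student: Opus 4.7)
The plan is to prove a parameterized version of the classical Morse Lemma (Lemma \ref{ml}) along $C$. First, I would use the tubular neighborhood theorem: pick a Riemannian metric on $M$ and apply the normal exponential map to identify a neighborhood of $x$ with a neighborhood of the zero section of $\mathcal{V}(C) \to C$. This yields fibered coordinates $(v^0, \nu)$ with $v^0$ parametrizing $C$ near $x$ and $\nu \in \mathcal{V}(C)_{v^0}$. Because $Df|_C = 0$, the function $f|_C$ is locally constant and equals $f(C)$.

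Next, I would diagonalize the normal Hessian along $C$. At each $c$ in a neighborhood $U \subset C$ of $x$ the non-degenerate form $D^2 f_c |_{\mathcal{V}(C)_c}$ splits the fiber into negative and positive eigenspaces $\mathcal{V}^u_c \oplus \mathcal{V}^s_c$ of locally constant dimensions $\mu(C)$ and $n - \dim C - \mu(C)$. The spectral projectors depend smoothly on $c$, which produces a smooth local splitting $\mathcal{V}(C)|_U = \mathcal{V}^u \oplus \mathcal{V}^s$ and a smoothly varying orthonormal frame in which the normal Hessian is $\mathrm{diag}(-2 I_u, 2 I_s)$. A Taylor expansion in $\nu = (v^u, v^s)$, combined with $Df|_C = 0$ and $f|_C \equiv f(C)$, then gives
\[
f(v^0, v^u, v^s) = f(C) - \|v^u\|^2 + \|v^s\|^2 + R(v^0, v^u, v^s),
\]
with a remainder $R = O(\|\nu\|^3)$.

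Finally, I would remove $R$ by Moser's trick. Set $f_0 := f(C) - \|v^u\|^2 + \|v^s\|^2$ and $f_t := f_0 + t R$, and look for a time-dependent vector field $X_t$, tangent to the normal fibers and vanishing on $C$, whose flow $\phi_t$ fixes $C$ pointwise and satisfies $\phi_t^* f_t = f_0$. Differentiating in $t$ produces the homological equation $X_t \cdot f_t = -R$. The main obstacle is solving this equation smoothly with a vector field that vanishes on $C$, so that $\phi_1$ actually preserves the tubular neighborhood structure. Using Hadamard's Lemma to write $R = \sum_{i,j} \nu_i \nu_j\, b_{ij}(v^0, \nu)$ with $b_{ij} = O(\|\nu\|)$, and noting that $df_t$ has leading order linear in $\nu$, the homological equation reduces to a linear system in the components of $X_t$ whose principal part is invertible: the normal Hessian of $f_t$ agrees with $D^2 f_0$ on $C$ and, by continuity, stays non-degenerate near $x$ uniformly in $t \in [0,1]$. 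The resulting solution satisfies $X_t = O(\|\nu\|^2)$, in particular $X_t|_C = 0$, so integrating $X_t$ yields a smooth diffeomorphism $\phi_1$ fixing $C$ pointwise. Pulling back coordinates via $\phi_1$ puts $f$ into the required normal form, completing the proof.
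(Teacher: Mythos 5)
Your proof is essentially correct, but note that the paper itself does not prove Lemma \ref{mbl} at all: it is quoted as a known result with a pointer to Banyaga--Hurtubise \cite{BH}, whose argument is a parametrized version of the classical completing-the-square (Palais-style) proof of the Morse Lemma along the critical submanifold. You instead give a genuine proof by the Moser path method: tubular neighbourhood, smooth spectral splitting $\mathcal{V}(C)=\mathcal{V}^u\oplus\mathcal{V}^s$ of the normal Hessian, Taylor expansion with cubic remainder $R$, and a fibre-tangent time-dependent vector field solving $X_t\cdot f_t=-R$. This buys a conceptually cleaner "deformation to the quadratic model" and fits the dynamical-systems spirit of the thesis, at the cost of having to solve the homological equation carefully, whereas the cited completing-the-square argument is more elementary and purely algebraic. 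Two small points in your sketch deserve tightening. First, you cannot in general choose a $g$-orthonormal frame of eigenvectors depending smoothly on the base point (eigenvalues may cross); what you actually need, and what does exist, is a smooth frame adapted to the form, i.e.\ orthonormal with respect to the definite forms $\mp\tfrac12 D^2f$ on $\mathcal{V}^u$ and $\mathcal{V}^s$, which normalizes the quadratic part to $-\|v^u\|^2+\|v^s\|^2$. Second, since $d f_t$ vanishes along $C$, the homological equation cannot be inverted pointwise; one must apply Hadamard's Lemma not only to $R=\sum_{i,j}\nu_i\nu_j b_{ij}$ but also to the fibre derivatives, writing $\partial_{\nu_k}f_t=\sum_l G^t_{kl}(v^0,\nu)\nu_l$ with $G^t(v^0,0)=2\,\mathrm{diag}(-I_u,I_s)$ invertible uniformly in $t$, and then take the ansatz $X_t=c(v^0,\nu,t)\nu$ with $c=-\,(G^t)^{-T}b=O(\|\nu\|)$; this yields $X_t=O(\|\nu\|^2)$, tangent to the fibres and vanishing on $C$, so after shrinking the neighbourhood the flow exists up to time $1$ and $\phi_1$ gives the asserted normal form. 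With these standard completions your argument is a valid alternative to the proof in \cite{BH}.
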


As one might suspect, there are also generalizations of the other statements of the preceeding section, for example the ``Morse-Bott inequalities'', and there is a Morse-Bott complex computing the cohomology of $M$, see Banyaga and Hurtubise \cite{BH}. We end this subsection with one last important generalization of an aspect of Morse theory which we need in the following:

\begin{definition}    
The \textit{(un-)stable manifolds of a critical submanifold $C$} are defined as

\begin{equation*}
W^i(C):=\bigcup_{x\in C}W^i(x).
\end{equation*}
\end{definition}
\begin{prop} \label{usmbm}
$W^u(C)$ and $W^s(C)$ are smooth submanifolds of $M$ and their dimensions are given by 
\begin{gather*}
\dim W^u(C)=n - \mu(C),\\
\dim W^s(C)= \mu(C) + \dim C.
\end{gather*}
\end{prop}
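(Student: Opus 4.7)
The plan is to combine the local normal form of a Morse-Bott function with the fact that the gradient flow is a diffeomorphism, producing the global submanifolds by first constructing germs along $C$ and then flowing them out.

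First I would pick $x\in C$ and apply the Morse-Bott Lemma \ref{mbl} to obtain a chart $(v^0,v^u,v^s)$ on a neighbourhood $U$ of $x$ in which $C\cap U=\{v^u=0,\,v^s=0\}$ and $f=f(C)-\|v^u\|^2+\|v^s\|^2$. Here $\mu(C)=\dim\mathcal{V}^u$, hence $\dim\mathcal{V}^s=n-\dim C-\mu(C)$. If the metric $g'$ on $U$ is taken to be the orthogonal product of the induced metric on $C$ with the flat metric on $\mathcal{V}^u\oplus\mathcal{V}^s$, then $\nabla_{g'}f$ has components $-2v^u$ in the $\mathcal{V}^u$-direction and $+2v^s$ in the $\mathcal{V}^s$-direction, so its flow fixes $v^0$, contracts $v^u$ exponentially, and dilates $v^s$ exponentially. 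Consequently the local candidates
\begin{equation*}
 W^u_{\mathrm{loc}}(C):=\{(v^0,0,v^s)\in U\},\qquad W^s_{\mathrm{loc}}(C):=\{(v^0,v^u,0)\in U\}
\end{equation*}
are smooth submanifolds, flow-invariant, and they are exactly the set of points of $U$ that converge to $C$ as $t\to -\infty$ resp.\ $t\to +\infty$. Their dimensions work out to $\dim C+\dim\mathcal{V}^s=n-\mu(C)$ and $\dim C+\dim\mathcal{V}^u=\mu(C)+\dim C$.

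For the originally fixed metric $g$ the above decoupling is only first order, but $C$ remains a compact normally hyperbolic invariant submanifold of the flow of $\nabla_g f$: its normal bundle still splits into a contracting part and a dilating part, by the fibrewise version of the eigenvalue computation in Remark~\ref{tangent}. The stable-manifold theorem in this normally hyperbolic setting (as used in Banyaga--Hurtubise \cite{BH}) then produces smooth local unstable and stable manifolds tangent along $C$ to $TC\oplus\mathcal{V}^s$ and $TC\oplus\mathcal{V}^u$, of the same dimensions as in the adapted model, and independent of the choice of metric. Using compactness of $C$ I would cover a tubular neighbourhood of $C$ by finitely many such charts and patch the local pieces into global smooth submanifolds $W^u_{\mathrm{loc}}(C)$ and $W^s_{\mathrm{loc}}(C)$ of a fixed tubular neighbourhood.

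Finally I would globalize by pushing forward along the gradient flow:
\begin{equation*}
 W^u(C)=\bigcup_{t\geq 0}\Phi_t\bigl(W^u_{\mathrm{loc}}(C)\bigr),\qquad W^s(C)=\bigcup_{t\leq 0}\Phi_t\bigl(W^s_{\mathrm{loc}}(C)\bigr).
\end{equation*}
Because each $\Phi_t$ is a diffeomorphism, every term is a smooth submanifold of the asserted dimension, and the two families are increasing in $t$. Any flow line in $W^u(C)$ is eventually inside $W^u_{\mathrm{loc}}(C)$ (for $t\to-\infty$) and, once it leaves the tubular neighbourhood, cannot re-enter because $f$ is strictly increasing along non-constant trajectories; this prevents self-overlaps and upgrades the immersion to an embedding. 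The main obstacle in this plan is the middle step: without a metric adapted to the Morse-Bott splitting one cannot read the local (un-)stable manifolds off the normal form directly, and one has to appeal to the stable-manifold theorem for normally hyperbolic invariant submanifolds to guarantee smoothness, invariance and the correct dimension; everything else is flowing out a finite-dimensional local model and checking injectivity.
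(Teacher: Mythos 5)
The paper never proves Proposition \ref{usmbm}: it is quoted as background, with the reader referred to Banyaga--Hurtubise \cite{BH} at the beginning of Section \ref{gmbt}, so there is no internal proof to compare against. Your plan is essentially the standard argument from that literature and, as a sketch, it is sound: you keep the paper's positive-gradient conventions straight (so that $W^u(C)$ is the set flowing \emph{out of} $C$, is modelled on $TC\oplus\mathcal{V}^s$ and has dimension $n-\mu(C)$), you correctly recognize that the Morse--Bott Lemma \ref{mbl} only gives the flow-invariant local model for an adapted metric, and that for the given $g$ one must invoke the stable manifold theorem for the normally hyperbolic invariant submanifold $C$ (here a manifold of fixed points, so the spectral-gap hypotheses are automatic), before saturating the local manifolds by the flow. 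Two refinements are worth making if you write this out. First, only the dimensions are metric-independent: for a general $g$ the local (un-)stable manifolds, and even their tangent spaces along $C$, are determined by the eigenspace splitting of $D(\nabla_g f)$ and need not coincide with $\mathcal{V}^u,\mathcal{V}^s$ from the adapted chart, so the phrase ``independent of the choice of metric'' should be restricted to dimensions. Second, for embeddedness the assertion that a forward flow line ``cannot re-enter the tube'' is not quite the statement you need; the clean version is that $W^u(C)$ intersects a sufficiently small tube around $C$ exactly in $W^u_{\mathrm{loc}}(C)$, which follows by applying $f$-monotonicity to the \emph{backward} orbit of any point of $W^u(C)$ in the tube: a point with nonzero unstable coordinate leaves the tube backward at an $f$-level strictly below $f(C)$ and therefore can never limit onto $C$. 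With that local identification, pulling a convergent sequence back by $\Phi_{-T}$ into the tube gives embeddedness, and the dimension count $\dim C+\dim\mathcal{V}^s=n-\mu(C)$, $\dim C+\dim\mathcal{V}^u=\mu(C)+\dim C$ is exactly as you state. These are sharpenings of your plan, not corrections to it.
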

Therefore, if $B,C \subset \C(f)$ and the intersection of the associated unstable and stable manifolds is transversal, then the \textit{connecting space of flow lines from $B$ to $C$} is defined as $\M(B,C):= W^u(B)\ti W^s(C)$ and 
\begin{equation*}
\dim \M(B,C) = \mu(C) - \mu(B) + \dim C.
\end{equation*}
Like in the Morse case, flow lines in $\M(B,C)$ are $\mathbb{R}$-shift invariant and the quotient is called the \textit{moduli space of flow lines from $B$ to $C$}:
\begin{equation*}
\widetilde{\M}(B,C):=\M(B,C) / \mathbb{R}.
\end{equation*}

\subsubsection{Flow lines with cascades}
\label{flwc}

Given a Morse-Bott function $f$ on $M$, there is a nice way of using the gradient flow line approach of Morse theory to compute $H^*(M)$. This idea of flow lines with cascades (in the following called FLWC) is due to Frauenfelder; for details, see \cite{Ff}.

Let $(h,g_0)$ be a Morse-Smale pair on $\C(f)$. For $x\in \C(h)$ a new Morse-like index of $x$ is defined as

\begin{equation*}
\lambda(x):=\mu_f(x)+\mu_h(x),
\end{equation*}
the sum of the Morse indices of $x$ with relation to $h$ and $f$.

\begin{definition}
Let $x,y \in \C(h)$. A \textit{flow line with $m$ cascades} from $x$ to $y$ is a tuple $(\underline{u},\underline{T})$ with
\begin{equation*}
\underline{u}=(u_1, \ldots, u_m),\ \underline{T}\in (\mathbb{R}_0^+)^{m-1}.
\end{equation*}
Here the $u_i \in C^{\infty}(\mathbb{R},M)$ are nonconstant solutions of 
\begin{equation*}
\dot{u}_i=\nabla_g f(u_i),
\end{equation*}
satisfying 
\begin{equation*}
\lim_{t \to -\infty}u_1(t)=p, \ \lim_{t \to \infty}u_m(t)=q
\end{equation*}
for some $p\in W^u(x,\nabla_{g_0}h)$ and $q\in W^s(y,\nabla_{g_0}h)$.

Furthermore for $i \in \{1,\ldots,m-1\}$ there are ordinary Morse flow lines $v_i \in C^{\infty}(\mathbb{R},\C(f))$ of $\nabla_{g_0}h$, such that
\begin{equation*}
\lim_{t \to -\infty}u_i(t)=v_i(0), \ \lim_{t \to \infty}u_{i+1}(t)=v_i(T_i).
\end{equation*}

\end{definition}

\begin{remark}
1. A flow line with zero cascades is just an ordinary Morse flow line on $\C(f)$.

2. The flow lines on $\C(f)$ are allowed to be constant, i.e. a cascade is allowed to converge to a critical point of $h$, but it will stay there only for a finite time interval. 
\end{remark}

\begin{definition}
The \textit{space of flow lines with $m$ cascades} from $x$ to $y$ in $\C(f)$ is denoted by 
\begin{equation*}
\mathcal{M}_m(x,y).
\end{equation*}
The group $\mathbb{R}^m$ acts freely on $\mathcal{M}_m(x,y)$ by timeshift on each cascade. The quotient is the \textit{moduli space of flow lines with $m$ cascades}:
\begin{equation*}
\widetilde{\mathcal{M}}_m(x,y).
\end{equation*}

\end{definition}

The usual transversality arguments show that these spaces are smooth manifolds with
\begin{align*}
\dim \widetilde{\M}_m(x,y)&=\lambda(y)-\lambda(x)-1 \\
&=\mu_f(y)+\mu_h(y)-\mu_f(x)-\mu_h(x)-1\\
&=\mu_h(y)-\mu_h(x)+m-1.
\end{align*}
Again, these moduli spaces admit natural compactifications and there is an associated gluing map, such that one is able to define a differential operator on $CC^*:=CC^*(f,h,g,g_0)=\mathbb{Z}_2\langle \C(h) \rangle$ (graded by $\lambda$):

\begin{definition}
For a generator $x\in \C(h)$ of $CC^*$ define
\begin{align*}
d^c x&:=\sum_m d_mx=\sum_{m} \sum_{\mu_h(y)=\mu_h(x)-m+1}n_m(x,y)y, \\
&n_m(x,y):= |\widetilde{\M}_m(x,y)| \bmod{2},
\end{align*}
and extend it to general cochains by linearity.
\end{definition}

Using continuation maps between FLWC-complexes associated to different functions and metrics, one proves the following theorem:
\begin{theorem}
The homology of the complex $(CC^*,d^c)$ is naturally isomorphic to $H^*(M)$.
\end{theorem}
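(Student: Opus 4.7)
The plan is to mimic the scheme already laid out for ordinary Morse homology in the previous subsection, adapting each of its three ingredients (well-definedness, invariance, identification with a known invariant) to the cascade setting.

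First I would establish that $(CC^*, d^c)$ really is a complex, i.e. $(d^c)^2 = 0$. The strategy is the standard boundary-of-the-boundary argument: for $x, z \in \C(h)$ with $\lambda(z) - \lambda(x) = 2$, one looks at the one-dimensional moduli spaces $\widetilde{\M}_m(x,z)$ for all $m \geq 0$ such that $\mu_h(z) - \mu_h(x) = 2 - m$, compactifies them, and matches boundary components. Compared to the Morse case, broken configurations in $\partial \widetilde{\M}_m(x,z)$ are of two qualitatively different types: either a cascade becomes $\mathbb{R}$-translation-broken at an intermediate critical point $y \in \C(h)$ (this glues pieces of $\widetilde{\M}_{m'}(x,y) \times \widetilde{\M}_{m-m'}(y,z)$), or a length parameter $T_i \in \mathbb{R}_0^+$ goes to $0$ (merging two adjacent cascades and producing an element of $\widetilde{\M}_{m-1}(x,z)$) or to $\infty$ (breaking the connecting $h$-flow line at a critical point of $h$ on $\C(f)$, which again contributes a product $\widetilde{\M}_{m'}(x,y)\times \widetilde{\M}_{m-m'}(y,z)$). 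The interior $T_i = 0$ boundaries of $\widetilde{\M}_m(x,z)$ are glued to $T_{i-1} = \infty$ or $T_i = \infty$ boundaries of $\widetilde{\M}_{m-1}(x,z)$ by the same length parameter, so they cancel in pairs; what remains are exactly the factorized broken configurations counted by $(d^c)^2 x$, which come in pairs as boundary components of compact one-manifolds and therefore sum to zero mod~2.

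Second, I would prove invariance of the resulting cohomology under the choices $(f, h, g, g_0)$ by a continuation argument, precisely parallel to the one sketched at the end of Subsection~\ref{mt}. Given two admissible quadruples, I would connect them by a generic homotopy and count parametrized flow lines with cascades to produce a cochain map $P$; a homotopy of homotopies then yields a cochain homotopy between two such $P$'s, so any two admissible choices give canonically isomorphic cohomology groups.

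Finally, to identify this common cohomology with $H^*(M)$, the cleanest route is to compare $(CC^*, d^c)$ with the ordinary Morse complex of a small perturbation of~$f$. Concretely, extend $h$ to a tubular neighbourhood of $\C(f)$ and consider $f_\varepsilon := f + \varepsilon \, \tilde h$ for small $\varepsilon > 0$, where $\tilde h$ is a smooth extension of $h$ cut off away from $\C(f)$. For $\varepsilon$ small enough, $f_\varepsilon$ is Morse with $\C(f_\varepsilon) = \C(h)$ and $\mu_{f_\varepsilon} = \lambda$; furthermore, a standard adiabatic/implicit-function analysis in the spirit of Frauenfelder shows that rigid gradient flow lines of $f_\varepsilon$ between points of $\C(h)$ are in bijection with rigid flow lines with cascades of $(f,h)$. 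Consequently $(CC^*, d^c) \cong (CM^*(f_\varepsilon, g), d)$ as cochain complexes, and the Morse cohomology on the right is already known to equal $H^*(M)$ (by the comparison to singular cohomology discussed in Section~\ref{mt}). Invoking the invariance of the preceding step to pass from the specific pair $(f_\varepsilon, g)$ back to the original data completes the proof.

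I expect the main obstacle to be the adiabatic identification of Morse flow lines of $f_\varepsilon$ with FLWC of $(f, h)$: one must control how the cascade segments (along $\nabla_g f$) and the interpolating pieces along $\nabla_{g_0} h$ glue together as $\varepsilon \to 0$, establish uniform exponential estimates near $\C(f)$ using the Morse--Bott Lemma~\ref{mbl} and the splitting $\mathcal{V}(C) = \mathcal{V}^u \oplus \mathcal{V}^s$, and run an implicit function theorem to produce the bijection. The $(d^c)^2 = 0$ argument is essentially bookkeeping once the compactness-plus-gluing package for cascades is in place; the invariance under choices is formally identical to the Morse case; but the adiabatic limit is where real analytic work is needed.
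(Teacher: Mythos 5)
The paper does not actually prove this theorem: it is quoted from Frauenfelder \cite{Ff}, and the only indication given of the argument is the one-line remark preceding it --- continuation maps between FLWC-complexes associated to different functions and metrics. In that route one deforms the Morse--Bott function $f$ to an honest Morse function, for which the cascade complex degenerates to the ordinary Morse complex, and the continuation isomorphisms (plus homotopies of homotopies) identify $H_*(CC^*,d^c)$ with Morse cohomology and hence with $H^*(M)$. Your steps 1 and 2 (boundary bookkeeping for $(d^c)^2=0$, invariance via continuation) match that scheme, but your step 3 is genuinely different: instead of continuing to a Morse function you perturb $f$ to $f_\varepsilon=f+\varepsilon\tilde h$ and assert a chain-level bijection between rigid $\nabla f_\varepsilon$-trajectories and rigid flow lines with cascades. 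This adiabatic correspondence is correct in principle and is used elsewhere in the literature (perturbation approaches to Morse--Bott homology, and Bourgeois--Oancea-type arguments in the Floer setting); it buys more than the paper's route --- an identification of the two complexes at chain level for fixed data --- but at the cost of the analysis you yourself flag (uniform exponential estimates near $\C(f)$ via Lemma \ref{mbl}, $\varepsilon$-uniform gluing of cascade segments to the $\nabla_{g_0}h$-segments, an implicit function theorem), none of which is needed in the continuation argument, where only the already-available transversality/compactness/gluing package enters. One bookkeeping correction to your step 1: when $T_i\to 0$ the two adjacent cascades do not merge into an interior element of $\widetilde{\M}_{m-1}(x,z)$; the limit is a broken $\nabla_g f$-trajectory passing through a point of $\C(f)$, i.e. a cascade-breaking boundary face of the compactification of $\widetilde{\M}_{m-1}(x,z)$. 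It is these cascade-breaking faces (not the $T=\infty$ faces) that are glued to the $T_i=0$ faces and cancel in pairs, while the $T_i\to\infty$ faces, where the $\nabla_{g_0}h$-segment breaks at a critical point of $h$, are exactly the factorized configurations counted by $(d^c)^2$, which then cancel mod $2$ as boundary points of compact one-manifolds.
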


\section{An equivariant Morse complex}
\label{ecm} % So I can \ref{altrings} later.

In this chapter we introduce the $S^1$-equivariant Morse complex $(CM_{S^1},d_{S^1})$. As mentioned in the introduction $d_{S^1}$ counts flow lines ``jumping'' along orbits of the $S^1$-action. To model this jumping we use (higher) continuation maps of Morse homology. We set up the ``moduli spaces of $k$-jump flow lines'' which are the main topic of the next chapter.

\subsection{Definition of the equivariant Morse complex}
\label{defemc}

The $S^1$-equivariant Morse cochain groups are defined as follows: 

\begin{definition} 
Let $CM^*:=CM^*(f,g)=\mathbb{Z}_2 \langle \C(f) \rangle$ denote the Morse cochains associated to $(f,g)$, graded by their Morse index. Let $H^*(BG;\mathbb{Z}_2)=H^*_{S^1}(pt;\mathbb{Z}_2)=\mathbb{Z}_2[T]$ be the ring of polynomials in one variable with $\deg (T)=2$. We view elements $c\otimes p(T) \in CM^*\otimes \mathbb{Z}_2[T]$ as polynomials with coefficients in $CM^*$ and grade it by the sum of the Morse index of $c$ and twice the polynomial degree of $p$.
 
\begin{equation*}
CM_{S^1}^m:=\Big\{\sum_{i=0}^{j}a_iT^i | a_i \in CM^l,\ 2j+l=m\Big\} .
\end{equation*}
The differential 
\begin{equation*}
d_{S^1}: CM_{S^1}^* \to CM_{S^1}^{*+1}
\end{equation*}
is defined as 
\begin{equation} \label{deq}
d_{S^1} := d\otimes 1 + \sum_k R_{2k-1} \otimes T^k,
\end{equation}
where $d$ is the usual Morse differential. The operators $R_{2k-1}$ count $k$-jump flow lines of $\nabla_g f$ connecting critical points of index difference $2k-1$ - in contrast to $d$ they lower indices. Observe that the sum is finite, since for $2k-1>n=\dim M$ all $R_{2k-1}$ vanish, simply because there are no critical points with index greater than $n$ or with negative indices. 
\end{definition}

Of course this complex depends on input data like $f$ and $g$, but for notational convenience we omit this dependence and because the resulting complex will eventually turn out to be independent of these choices.

The remainder of this chapter is devoted to give a precise definition of the operators $R_{2k-1}$; mimicing the construction of the differential in ordinary Morse homology we now define $k$-jump flow lines and the associated moduli spaces.

\subsection{$k$-jump flow lines}
\label{ms}

We proceed in the following way: We start with the construction of $\widetilde{\M}_1(x,y)$, using a ``continuation map'' of Morse homology. Then we generalize these ideas to obtain $\widetilde{\M}_k(x,y)$ for $k\geq2$: To keep things simple, we carry out everything in full detail for $k=2$, the corresponding statements and proofs in the case $k> 2$ differ only by notational complexity since more dimensions are involved.

\subsubsection{$1$-jump flow lines}
\label{1jfl}

\begin{definition}
In the following we write 

\begin{equation*}
\sigma: S^1 \times M \to M, \ (s,p) \mapsto s.p:=\sigma(s,p)
\end{equation*}
for the action of $S^1$ on $M$ and $\sigma^p, \sigma_s$ for the maps 
\begin{equation*}
s \mapsto \sigma(s,p), \quad p \mapsto \sigma(s,p).
\end{equation*}
The defining properties for a $S^1$-action on $M$ translate to
\begin{align*}
\sigma_e=id_M \text{ and } \sigma_{s_2s_1}=\sigma_{s_2}\circ \sigma_{s_1},
\end{align*}
i.e. $ s \mapsto \sigma_s \in \text{Diff}(M) $ is a smooth group homomorphism.
\end{definition}

Let $x,y \in \C(f)$; for $s \in S^1$ choose a homotopy $f_t: M \times \mathbb{R} \to \mathbb{R} $ from $f$ to $f\circ \sigma_s $. The equation for a $1$-jump flow line then looks like this:
\begin{gather*}
\dot{u}(t)=\nabla f_t(u(t)), \\
u^-=x, 
u^+=s^{-1}.y.
\end{gather*}

In Morse theory this equation is known as a continuation equation. To avoid the time-dependency one translates this back into an autonomous equation on $M \times \mathbb{R}$ or $M \times S^1$ (to stay in the nice case of a closed manifold, cf. Remark \ref{just}). Since $s\in S^1$ works as a parameter we have to deal with a family of Morse functions smoothly parametrized by $S^1$, i.e a Morse-Bott function:

\begin{definition}\label{V_1}
For $(f,g)$ a Morse-Smale pair, let 
\begin{equation*}
\tilde{F}_1: M \times S^1 \times [0,1] \to \mathbb{R}, \quad
(p,s,r) \mapsto \tilde{F}_1(p,s,r)=\begin{cases}
    f(p) & \text{if $ r =0 $, }\\
    f(s.p) & \text{if $r=1$, }
  \end{cases} 
\end{equation*}
be a $S^1$-family of homotopies between $f$ and $f(s. \ \cdot \ )=f \circ \sigma_s$, such that $\tilde{F}_1$ is independent of $r$ for $r$ near $0$ and $1$.

Choose smooth $h: [0,1] \to \mathbb{R}$ with $h' \geq 0,\ h'^{-1}(0)=\{0,1\},\ h(0)=0$ and $h(1)> (\max{f} - \min f)$ (e.g. $h(r)=K(1+\sin \big(\pi(r-\frac{1}{2})\big) $ with $K>(\max f - \min f)$), such that $F_1: W_1 :=M \times S^1 \times [0,1] \to \mathbb{R}$ defined by 
\begin{equation*}
F_1(p,s,r):=\tilde{F}_1(p,s,r) + h(r)
\end{equation*}
has only critical points if $r=0,1$. 

Finally choose a product metric $G_1:=g\oplus1\oplus1$ on $W_1$.
\end{definition}

\begin{remark} \label{just}
Actually we should extend $F_1$ to be defined on $M\times S^1 \times S^1$ to continue working with a closed manifold. For this we would view the last $S^1$-factor as the interval $[-1,1]$ with endpoints identified, extend $\tilde{F}_1$ and $h$ symmetrically for $r < 0$ and restrict all following constructions to the subspace with $r \geq 0$. Keeping this in mind we continue working on $W_1 =M \times S^1 \times [0,1]$ for notational convenience and to emphasize the different roles of the $S^1$-factors involved - one is the jumping-parameter, while the other one parametrizes the homotopy. 
\end{remark}

\begin{lemma} \label{f1mb}
$F_1$ is a Morse-Bott function with 
\begin{equation*}
\C(F_1)= \bigcup_{x,y \in \C(f)} (A_x  \cup  B_y),
\end{equation*}
where 
\begin{align*}
A_x= & \{x\} \times S^1 \times \{0\}, \\
B_y= & \{(s^{-1}.y,s) | s\in S^1\} \times \{1\},
\end{align*} 
\end{lemma}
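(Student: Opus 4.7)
The proof is a direct coordinate computation, made transparent for $B_y$ by a diffeomorphism of $W_1$ that trivializes its graph structure. First I verify that $\C(F_1)\subset M\times S^1\times\{0,1\}$: since $\tilde F_1$ is independent of $r$ on $[0,\epsilon]\cup[1-\epsilon,1]$ for some small $\epsilon>0$, on the middle region $[\epsilon,1-\epsilon]$ one has $h'(r)\ge c>0$, and taking the amplitude $K$ of $h$ large enough ensures $h'(r)>\sup_{W_1}|\partial_r\tilde F_1|$ on $[\epsilon,1-\epsilon]$, forcing $\partial_rF_1>0$ there. Hence critical points lie in the boundary slices $r=0,1$.

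I then read off the critical set slice by slice. Near $r=0$, $F_1(p,s,r)=f(p)+h(r)$, so $dF_1=0$ reduces to $df(p)=0$ together with $h'(0)=0$ (automatic); this gives $\C(f)\times S^1\times\{0\}=\bigsqcup_x A_x$. Near $r=1$, $F_1(p,s,r)=f(s.p)+h(r)$, and $\partial_pF_1=0$ reads $(D\sigma_s)^{*}df(s.p)=0$; since $D\sigma_s$ is a diffeomorphism, this forces $s.p\in\C(f)$, i.e.\ $p=s^{-1}.y$ for some $y\in\C(f)$. Then $\partial_sF_1=df(s.p)\cdot X(s.p)$ (with $X$ the fundamental vector field of the $S^1$-action) vanishes automatically because $df(y)=0$, and $\partial_rF_1=h'(1)=0$; this yields $\bigsqcup_y B_y$.

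For the Morse-Bott property, both $A_x$ and $B_y=\{(s^{-1}.y,s,1)\mid s\in S^1\}$ are smooth closed $1$-submanifolds diffeomorphic to $S^1$. At a point of $A_x$, in coordinates $(u^1,\ldots,u^n,s,r)$ with $u$ Morse coordinates for $f$ about $x$, the Hessian of $F_1$ is block diagonal with blocks $\mathrm{Hess}(f)(x)$, $0$ in the $s$-$s$ block, and $h''(0)$; the $s$-direction is precisely $TA_x$, and on its complement the Hessian is non-degenerate since $\mathrm{Hess}(f)(x)$ is and $h''(0)>0$. For $B_y$ I apply the diffeomorphism $\Psi:(p,s,r)\mapsto(s.p,s,r)$ of $W_1$: near $r=1$ one has $F_1\circ\Psi^{-1}(q,s,r)=f(q)+h(r)$, and $\Psi(B_y)=\{y\}\times S^1\times\{1\}$, so the same model computation applies, now with $h''(1)<0$.

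The only genuine obstacle is the graph structure of $B_y$: working directly in $(p,s,r)$-coordinates there would produce cross terms involving $X$ in the Hessian that obscure non-degeneracy. The change of variables $\Psi$ absorbs the $s$-dependence of the critical submanifold into a flat product, reducing the verification to the same local model $f(q)+h(r)$ as at $A_x$.
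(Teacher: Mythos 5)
Your proposal is correct, and its skeleton is the same as the paper's: confine critical points to the slices $r=0,1$ using the growth of $h$, read off $\C(F_1)$ at $r=0$ from $df(p)=0$ and at $r=1$ from $(D\sigma_s)^*df(s.p)=0$ using that $\sigma_s$ is a diffeomorphism. Where you diverge is in how the Morse--Bott non-degeneracy is certified. The paper argues that $(p,r)\mapsto F_1(p,s,r)$ is Morse for every fixed $s$, so $F_1$ is a smooth $S^1$-family of Morse functions and hence Morse--Bott; this is terse, since one must still observe that the slicewise non-degenerate Hessian forces the kernel of the full Hessian to be exactly the tangent line to the (graph-like) critical circle. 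You instead compute the Hessian in a block form at $A_x$ and, for $B_y$, first apply the untwisting diffeomorphism $\Psi(p,s,r)=(s.p,s,r)$, which carries $F_1$ to $f(q)+h(r)$ near $r=1$ and flattens $B_y$ to $\{y\}\times S^1\times\{1\}$, so the same product model applies. This is the same trick the paper itself uses later (the isomorphism $\M_1^B(x,y)/S^1\cong\M_1^A(x,y)/S^1$ in Proposition \ref{ms1}), and it buys you an explicit identification of the normal Hessian, at the cost of nothing. One shared caveat: both arguments need $h''(0)>0$ and $h''(1)<0$ (equivalently, non-degeneracy of $h$ at the endpoints after the symmetric extension of Remark \ref{just}); the paper's hypotheses $h'\ge 0$, $h'^{-1}(0)=\{0,1\}$ do not force this in general, but it holds for the model $h(r)=K(1+\sin(\pi(r-\tfrac12)))$, and you are right to state it explicitly, as the index computations $\mu(A_x)=\mu(x)$, $\mu(B_x)=\mu(x)+1$ in Lemma \ref{dimsf1} depend on these signs.
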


\begin{proof}

By construction critical points occur only at $r=0,1$, where $\tilde{F}_1\upharpoonright_{M \times S^1} $ is given by 
\begin{equation*}
(p,s) \mapsto f(p) \text{ and } f(s.p) \text{ respectively.}
\end{equation*}

At $r=0$:
\begin{align*}
D_p F_1(p,s,0) & =D f(p), \\
D_s F_1(p,s,0) & \equiv 0.
\end{align*}
Therefore 
\begin{equation*}
DF_1(p,s,0)=0 \text{ for } (p,s)\in \C(f)\times S^1.
\end{equation*}

At $r=1$:
\begin{align*}
D_pF_1(p,s,1) & =D_p (f\circ \sigma_s)(p) \\
 & = Df(\sigma_s(p))\cdot D\sigma_s(p), \\
D_sF_1(p,s,1) & = D_s(f\circ \sigma^p)(s) \\
& = Df(\sigma^p(s)) \cdot D\sigma^p(s).
\end{align*}

$\sigma_s$ is a diffeomorphism, so $D\sigma_s(p) \neq 0$ for all $p\in M$ and we conclude
\begin{equation*}
D F_1(p,s,1)=0 \text{ for } (p,s)\in \{(s^{-1}.x,s) | s\in S^1, x\in \C(f)\}. 
\end{equation*}

This also shows that the function $(p,r) \mapsto F_1(p,s,r)$ is Morse for every $s\in S^1$. So $F_1$ is a family of Morse functions, smoothly parametrized by $S^1$ ($\sigma$ is smooth) and therefore a Morse-Bott function. Note, that although the $S^1$-orbit of $x\in \C(f)$ might hit another critical point $y\in \C(f)$, the corresponding critical submanifolds of $F_1$ do not intersect in $W_1$:
\begin{gather*}
B_x \cap B_y \neq \emptyset \Longrightarrow (s^{-1}.x,s)=(\tilde{s}^{-1}.y,\tilde{s}) \\
\Longrightarrow s=\tilde{s} \\ \Longrightarrow x=y.
\end{gather*}
\end{proof}

We want to study gradient flow lines of $F_1$. Since $s\in S^1$ plays the role of a parameter, instead of the vector field $\nabla_{G_1}F_1$ we use a simpler, but still ``gradient-like'' vector field:

\begin{definition}
A \textit{gradient-like} vector field with respect to a Morse-Bott function $f$ defined on a smooth $n$-dimensional manifold $M$ is a smooth vector field $v$ satisfying:
\begin{equation*}
\mathcal{L}_v(f) > 0 \text{ on } M\setminus \C(f)
\end{equation*}
and around every point $c \in C \subseteq \C(f)$ there exist local coordinates $(p^i)$ such that
\begin{equation*}
v(p)=-2\sum_{i>\dim C}^{\mu(C)}{p^i\partial_{p^i}} + 2 \sum_{j> \mu(C)}^n{p^j\partial_{p^j}}.
\end{equation*}
This means, there is essentially no difference in studying the global structure of flow lines of $\nabla_g f$ and $v$.  
\end{definition}

\begin{lemma}
Let $V_1$ be the vector field on $W_1$ defined by
\begin{equation*}
(p,s,r) \mapsto \big(\nabla_g F_1(p,s,r),0,F_1'(p,s,r) \partial_r \big),
\end{equation*}
where $'$ denotes the derivative with respect to $r$. Then $V_1$ is a gradient-like vector field for $F_1$.
\end{lemma}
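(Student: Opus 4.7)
The plan is to verify the two defining properties of a gradient-like vector field for $(W_1, F_1)$: positivity of $\mathcal{L}_{V_1} F_1$ on the complement of $\C(F_1)$, and the prescribed local model around each critical submanifold. Since $G_1 = g\oplus 1\oplus 1$ is a product metric and the $S^1$-component of $V_1$ vanishes, one computes directly
\begin{equation*}
\mathcal{L}_{V_1} F_1 \;=\; dF_1(V_1) \;=\; |\nabla_g F_1|_g^2 + (F_1')^2,
\end{equation*}
which is non-negative and vanishes iff both $\nabla_g F_1 = 0$ and $F_1' = 0$. I would show this joint vanishing forces $(p,s,r)\in\C(F_1)$ by splitting into cases. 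At $r=0$ and $r=1$, the term $F_1'$ vanishes identically because $\tilde F_1$ is locally constant in $r$ there (by Definition \ref{V_1}) and $h'(0)=h'(1)=0$; the equality therefore reduces to $\nabla_g F_1 = 0$, which via the calculations in the proof of Lemma \ref{f1mb} pins $(p,s,r)$ into some $A_x$ or $B_y$. On the interior $r\in(0,1)$, $h'>0$, and provided $\tilde F_1$ and $K$ are chosen so that $h'$ pointwise dominates $|\partial_r\tilde F_1|$ — as is implicit in the clause of Definition \ref{V_1} asserting that $F_1$ has no interior critical points — we get $F_1'>0$, ruling out the equality case there.

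For the local model, I fix a critical submanifold: say $A_x$; the case of $B_y$ is reduced to the $A_y$-case by the diffeomorphism $(p,s,r)\mapsto(s.p,s,r)$, which conjugates $F_1$ near $r=1$ to the product form $f(p')+h(r)$ on $M\times S^1\times[0,1]$. Using Morse coordinates $(p^1,\ldots,p^n)$ for $f$ around $x$ that are $g$-orthonormal at $x$, together with the coordinates $s$ and $r$, the identity $\tilde F_1\equiv f(p)$ for $r$ near $0$ combined with $h(0)=h'(0)=0$ and $h''(0)>0$ yields
\begin{equation*}
F_1(p,s,r) \;=\; f(x) - \sum_{i\le\mu(x)}(p^i)^2 + \sum_{j>\mu(x)}(p^j)^2 + \tfrac{h''(0)}{2}r^2 + O(|p|^3+r^3).
\end{equation*}
After the rescaling $\tilde r = \sqrt{h''(0)/2}\,r$ and the coordinate adjustment absorbing higher-order terms (as in the proof of the Morse-Bott Lemma \ref{mbl}), $V_1$ reads in these normal coordinates
\begin{equation*}
V_1 \;=\; -2\sum_{i\le\mu(x)}p^i\partial_{p^i} + 2\sum_{j>\mu(x)}p^j\partial_{p^j} + 2\tilde r\,\partial_{\tilde r} + 0\cdot\partial_s,
\end{equation*}
which is precisely the prescribed local model for a gradient-like vector field near a critical submanifold of dimension $1$ and Morse index $\mu(x)$.

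The main obstacle is the interior case in the first step. The conditions on $h$ recorded in Definition \ref{V_1} only ensure $h(1)>\max f-\min f$; what one really needs is the sharper pointwise estimate $h'(r)>|\partial_r\tilde F_1(p,s,r)|$ throughout $r\in(0,1)$ to guarantee $F_1'>0$. This is easily enforced by scaling $K$ up and should be read as implicit in the construction, but it is worth flagging. Everything else — unwinding the $B_y$-case via the diffeomorphism above and passing to Morse-Bott normal coordinates — is routine bookkeeping.
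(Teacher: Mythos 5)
Your proposal is correct and follows the same two-step verification as the paper, with the identical computation $\mathcal{L}_{V_1}F_1=|\nabla_g F_1|_g^2+(F_1')^2$; the differences are minor but worth recording. First, you spell out why the simultaneous vanishing of the two terms forces a point of $\C(F_1)$, and you correctly observe that on the interior $r\in(0,1)$ this needs the pointwise bound $h'(r)>|\partial_r\tilde F_1|$, i.e. $F_1'>0$ there; the paper simply asserts positivity, relying on the clause in Definition \ref{V_1} that $F_1$ has critical points only at $r=0,1$, which read literally only excludes common zeros of all three partial derivatives and not of the two entering $V_1$. Your sharper reading, enforceable by enlarging $K$, is the right way to make that step airtight. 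Second, for the local model the paper argues uniformly at all critical submanifolds: it freezes $s$, notes that $V_1\restriction_{M\times[0,1]}$ is the gradient of the Morse function $(p,r)\mapsto F_1(p,s,r)$, invokes Lemma \ref{ml}, and then lets the resulting coordinates vary smoothly with $s$; you instead expand explicitly near $A_x$ and reduce the $B_y$-case to the $A_y$-case via $(p,s,r)\mapsto(s.p,s,r)$. That conjugation does put the function into the product form $f(p')+h(r)$, but the pushforward of $V_1$ is then the gradient with respect to $(\sigma_s)_*g$ rather than $g$, so you still have to run the same normal-form step for an $s$-dependent metric --- the reduction is not wrong, but it buys no simplification over the paper's uniform treatment. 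Finally, both you and the paper pass from the Morse(-Bott) normal form of the function to the linear normal form of the vector field without further comment; strictly this flattening of the flow requires its own (standard) argument, so on that point your proof sits at exactly the paper's level of rigor.
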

\begin{proof}
The Lie derivative of $F_1$ in direction $V_1$ at $(p,s,r)\not\in \C(F_1)$ is given by
\begin{gather*}
\mathcal{L}_{V_1}(F_1)(p,s,r)= DF_1(p,s,r)\cdot V_1(p,s,r) \\
= D_pF_1(p,s,r)\nabla_g F_1(p,s,r) + D_rF_1(p,s,r)\nabla_rF_1(p,s,r) \\
= g\big( \nabla_g F_1(p,s,r), \nabla_g F_1(p,s,r)\big) +  \big( \frac{\partial}{\partial r}F_1(p,s,r)\big)^2 > 0.
\end{gather*}

Fix $s\in S^1$. At $c\in C \subset \C(F_1)$  the restriction of $V_1$ to $M \times [0,1]$ is the gradient vector field of the Morse function $(p,r)\mapsto F_1(p,s,r)$. Thus, Lemma \ref{ml} implies the existence of suitable coordinates $(q^i)_{i\in\{1, \ldots,n+1\}}$ around $pr_{M\times[0,1]}(c)$, such that 
\begin{equation} \label{vm}
V_1\restriction_{M\times[0,1]}(q)= -2\sum_{i=1}^k q^i \partial_{q^i} + 2\sum_{j=k+1}^{n+1} q^j \partial_{q^j} \ \text{ for } k:=\mu(c).
\end{equation}   
Let $U$ be a neighbourhood of $c$ in $W_1$. Since $F_1$ is smooth, $C$ is a submanifold of $W_1$ and $\mu(c)$ is constant along $C$, the coordinate system $(q^i)_{i\in\{1, \ldots,n+1\}}$ depends smoothly on $s\in S^1$. Therefore, we can find coordinates 
\begin{equation*}
Q=(Q_1(p,s,r),\ldots,Q_n(p,s,r),s,Q_{n+1}(p,s,r))
\end{equation*}
such that $V_1$ is locally given by
\begin{equation*}
V_1(Q)= -2\sum_{i=1}^k Q^i \partial_{Q^i} + 2\sum_{j=k+1}^{n+1} Q^j \partial_{Q^j}.
\end{equation*}
\end{proof}

\begin{definition}
A \textit{$1$-jump flow line} between $x$ and $y$ is a solution of
\begin{align}
u: \mathbb{R} \to W_1, & \quad \dot{u}(t)=V_1(u(t)); \label{v1} \\
u^- \in A_x, & \quad u^+ \in B_y. \nonumber
\end{align} 
\end{definition}

Observe that the construction of $F_1$ (especially the right ``increasing behaviour'' of $h$, i.e. $h(1)>(\max{f}-\min f)$) implies that there are precisely three types of flow lines possible: 
\\

1. starting at $A_x$, ending in $A_y$.

2. starting at $A_x$, ending in $B_y$.

3. starting at $B_x$  ending in $B_y$. 
\\

Moreover, since $\tilde{F}_1$ is independent of $r$ near $0$ and $1$, the subsets 
$$
M\times S^1 \times \{0\}, M\times S^1\times \{1\} \subset W_1
$$
are both flow invariant. Because of $F_1(p,s,0)=f(p)$, flow lines of the first type are just $S^1$-families of ordinary Morse flow lines solving equation \eqref{eq1}.

\begin{lemma} \label{dimsf1}
The dimensions of the (un-)stable manifolds of $V_1$ are given by
\begin{align*}
&\dim W^u(A_x)= n-\mu(x)+2, \\
&\dim W^s(A_x)= \mu(x)+1, \\
&\dim W^u(B_x)= n-\mu(x)+1, \\
&\dim W^s(B_x)= \mu(x)+2.
\end{align*}
\end{lemma}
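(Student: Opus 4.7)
The proof rests on the observation that the $S^1$-component of $V_1$ vanishes identically, so the flow of $V_1$ preserves the circle coordinate. Each slice $W_1^s := M \times \{s\} \times [0,1]$ is therefore $V_1$-invariant, and the restriction $V_1|_{W_1^s}$ coincides with the gradient of the function $F_1^s := F_1(\cdot,s,\cdot)$ on $M \times [0,1]$ with respect to $g \oplus 1$ (compare the proof of Lemma \ref{f1mb}, which already established that $(p,r) \mapsto F_1(p,s,r)$ is Morse for each fixed $s$). The critical points of $F_1^s$ are $(x,0)$ for $x \in \C(f)$ and $(s^{-1}.y, 1)$ for $y \in \C(f)$.

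Next I would compute the Morse indices of $F_1^s$ at these two critical points. Because $\tilde F_1$ is independent of $r$ in a neighborhood of $\{0,1\}$, the Hessian of $F_1^s$ splits block-diagonally into an $M$-block and an $r$-block at each critical point. At $(x,0)$ the $M$-block is the Hessian of $f$ at $x$, contributing $\mu(x)$ negative eigenvalues, while the $r$-block $h''(0) > 0$ (since $h$ has a strict local minimum at $0$, forced by the Morse-Bott condition on $F_1$) contributes none; thus $\mu_{F_1^s}(x,0) = \mu(x)$. At $(s^{-1}.y, 1)$ the $M$-block is the Hessian of $f \circ \sigma_s$ at $s^{-1}.y$, which has the same Morse index $\mu(y)$ as $f$ at $y$ because $\sigma_s$ is a diffeomorphism, while $h''(1) < 0$ supplies one further negative eigenvalue, giving $\mu_{F_1^s}(s^{-1}.y, 1) = \mu(y) + 1$.

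Applying the Stable Manifold Theorem slice-wise, on each $(n{+}1)$-dimensional slice the (un-)stable manifolds of these critical points have dimensions $\mu(x)$, $(n+1)-\mu(x)$, $\mu(y)+1$ and $n-\mu(y)$. Since the flow preserves $s$, the full (un-)stable manifold of $A_x$ (resp.\ $B_y$) for $V_1$ is the union over $s \in S^1$ of the corresponding slice-wise manifolds, picking up one additional smooth parameter; the four asserted dimensions follow immediately.

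The one subtle point is justifying that these $S^1$-parametrized families assemble into genuine smooth submanifolds of $W_1$ of the stated dimensions, rather than merely set-theoretic unions. This follows from smoothness of $\sigma$ and $\tilde F_1$ (hence of the family $F_1^s$ in $s$) together with the constancy of the slice-wise Morse index along $A_x$ and $B_y$, which guarantees that the slice-wise (un-)stable manifolds vary smoothly with $s$. An equivalent route bypassing this bookkeeping is to invoke Proposition \ref{usmbm} directly, using $\dim W_1 = n+2$, $\dim A_x = \dim B_y = 1$, and the Morse-Bott indices $\mu(A_x) = \mu(x)$, $\mu(B_y) = \mu(y)+1$ (which may be read off from the same Hessian computation as above, noting that the tangent direction to the critical submanifold contributes to the kernel of the Hessian and not to its index).
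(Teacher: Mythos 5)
Your proposal is correct, and its core index computation is exactly the paper's: $\mu(A_x)=\mu(x)$ because near $r=0$ the function is $f(p)+h(r)$ with $h''(0)>0$ and no $s$-dependence, while at $r=1$ the extra negative eigenvalue $h''(1)<0$ gives $\mu(B_y)=\mu(y)+1$ (the diffeomorphism $\sigma_s$ preserving the index of the $M$-block). Where you diverge is in how the dimension formulas are then extracted. The paper simply feeds these Morse--Bott indices, $\dim W_1=n+2$ and $\dim A_x=\dim B_y=1$, into Proposition \ref{usmbm} --- which is precisely your closing ``equivalent route''. Your primary argument instead exploits the vanishing $S^1$-component of $V_1$: each slice $M\times\{s\}\times[0,1]$ is flow-invariant, the slice-wise flow is the genuine gradient flow of the Morse function $(p,r)\mapsto F_1(p,s,r)$ (as already noted in the proof of Lemma \ref{f1mb}), and the (un-)stable manifolds of $A_x$, $B_y$ are the $S^1$-unions of the slice-wise ones, adding one dimension. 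This hands-on route is slightly longer but buys something: it re-proves, for this particular vector field, that the unions are smooth submanifolds of the stated dimension rather than importing that from the general Morse--Bott statement, and your remark about smooth $s$-dependence of the slice-wise invariant manifolds is the right point to flag there. Either route is acceptable; the paper's is the two-line version of your final paragraph.
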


\begin{proof}
We have $\mu(A_x)=\mu(x)$, since $F_1$ is increasing along the $r$-direction and independent of $s \in S^1$ near $r=0$. At $B_x$ we have $h''(1)<0$, because $h$ attains its maximum at $r=1$, and therefore $\mu(B_x)=\mu(x)+1$. 
The dimension formulae follow now from Proposition \ref{usmbm}.
\end{proof}

\begin{definition}
For $x,y \in \C(f)$ we define
\begin{align*}
\M_1^A(x,y):= & W^u(A_x)\cap W^s(A_y), \\
\M_1^B(x,y):= & W^u(B_x)\cap W^s(B_y), \\
\M_1(x,y):= & W^u(A_x)\cap W^s(B_y).
\end{align*}
\end{definition}

\begin{prop} \label{ms1}
Given $x,y \in \C(f)$. $\M_1^A(x,y)$ and $\M_1^B(x,y)$ are equipped with a free $S^1$-action given for $\theta \in S^1$ by 
\begin{gather*}
\theta.u_A(t) =\theta.(p(t),s,0):=(p(t),\theta s,0), \\
\theta.u_B(t) =\theta.(p(t),s,1):=(\theta^{-1}.p(t),\theta s,1).
\end{gather*}
Moreover, all three spaces $\M_1^A(x,y), \M_1^B(x,y), \M_1(x,y)$ are equipped with a free $\mathbb{R}$-action given by
\begin{equation*}
\tau.u(t):=u(t+\tau),
\end{equation*}
and the corresponding quotients of spaces of flow lines at $r=0$ and $r=1$ are isomorphic:
\begin{equation*}
\M_1^B(x,y)/S^1 \cong \M_1^A(x,y)/S^1.
\end{equation*} 
\end{prop}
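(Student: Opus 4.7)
My plan is to first get an explicit description of each of the three moduli spaces and then read off all the claims directly. Since $\tilde F_1$ is $r$-independent near $r=0,1$ and $h'(0)=h'(1)=0$, the $r$- and $s$-components of $V_1$ vanish on the slices $M\times S^1\times\{0,1\}$, so these slices are flow-invariant. On the $r=0$ slice $F_1(p,s,0)=f(p)$, so $V_1$ reduces to $(\nabla_g f(p),0,0)$, and a flow line in $\M_1^A(x,y)$ is simply an ordinary Morse flow line $p(t)\in\widetilde{\M}(x,y)\cdot \mathbb{R}$-orbit from $x$ to $y$ together with a fixed parameter $s\in S^1$. This yields a canonical diffeomorphism
\begin{equation*}
\M_1^A(x,y)\;\cong\;\M(x,y)\times S^1.
\end{equation*}
On the $r=1$ slice $F_1(p,s,1)=f(s.p)$, and I will use (either as an assumption or by first averaging $g$) that the $S^1$-action is isometric, so that $\nabla_g(f\circ\sigma_s)(p)=D\sigma_s^{-1}(p)\,\nabla_g f(s.p)$. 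Setting $q(t):=s.p(t)$ and differentiating, the chain rule shows $\dot q=\nabla_g f(q)$, so $q$ is an ordinary Morse flow line from $x$ to $y$. The map $(p(t),s,1)\mapsto(s.p(t),s)$ therefore gives a diffeomorphism
\begin{equation*}
\M_1^B(x,y)\;\cong\;\M(x,y)\times S^1.
\end{equation*}

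Next I verify the two $S^1$-actions. On $\M_1^A$ the action only rotates the $s$-coordinate, and it preserves the flow equation trivially because $V_1|_{r=0}$ is independent of $s$; freeness is the freeness of $S^1$ on itself. On $\M_1^B$ the action $\theta.(p(t),s,1)=(\theta^{-1}.p(t),\theta s,1)$ becomes, under the identification above, the action $(q,s)\mapsto(q,\theta s)$, which visibly preserves flow lines and is free. (If one prefers a direct verification on $W_1$, it is a short chain-rule computation using $\sigma_{\theta s}=\sigma_s\circ\sigma_\theta$ and the isometry property, exactly analogous to the computation of the previous paragraph.) The $\mathbb{R}$-action by time-shift commutes with the autonomous ODE $\dot u=V_1(u)$ on all three spaces and is free since flow lines are non-constant: $F_1$ is strictly increasing along any non-constant integral curve of the gradient-like vector field $V_1$, while the endpoints lie on distinct critical submanifolds.

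Finally, the isomorphism of quotients is read off from the two identifications above. Both $\M_1^A(x,y)/S^1$ and $\M_1^B(x,y)/S^1$ collapse onto $\M(x,y)$ by forgetting the $s$-coordinate in $\M(x,y)\times S^1$, giving the chain of canonical bijections
\begin{equation*}
\M_1^A(x,y)/S^1\;\cong\;\M(x,y)\;\cong\;\M_1^B(x,y)/S^1,
\end{equation*}
where the right-hand map sends the class of $(p(t),s,1)$ to $s.p(t)$, and the left-hand map sends the class of $(p(t),s,0)$ to $p(t)$. Smoothness of these bijections is clear from the smoothness of the action $\sigma$.

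The main obstacle is really only the $S^1$-equivariance of the vector field $V_1$ on the $r=1$ slice, which boils down to the chain-rule identity $D\sigma_\theta(\nabla_g(f\circ\sigma_s)(p))=\nabla_g(f\circ\sigma_{s\theta^{-1}})(\theta.p)$. This is where the assumption (or prior reduction) that $g$ is $S^1$-invariant is essential; without it the prescribed formula for $\theta.u_B$ would fail to preserve the flow equation. Once this point is settled, the remainder of the proof is the bookkeeping laid out above.
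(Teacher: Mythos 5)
Your proposal is correct and follows essentially the same route as the paper: the core is the identical chain-rule computation showing that $(p(t),s,1)\mapsto (s.p(t),s,0)$ carries solutions of the $r=1$ equation to ordinary flow lines of $\nabla_g f$, and your quotient identification $\M_1^B(x,y)/S^1\cong \M(x,y)\cong \M_1^A(x,y)/S^1$ agrees with the paper's map; packaging both spaces as $\M(x,y)\times S^1$ first is only a reorganization of the same argument. The one difference is that you state the $S^1$-invariance of $g$ explicitly (noting it can be arranged by averaging), whereas the paper uses it implicitly when it identifies $D\sigma_\theta(p)^*$ with $D\sigma_{\theta^{-1}}(\theta.p)$ --- a welcome clarification rather than a different method.
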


\begin{proof}

For $u \in  \M_1^A(x,y)$ we have $\theta.u \in \M_1^A(x,y)$ because $V_1$ does not depend on $s \in S^1$ at $r=0$.

For $u \in \M_1^B(x,y)$ we compute:
\begin{align*}
\frac{d}{dt}(\theta^{-1}.p)& =\frac{d}{dt}(\sigma_{\theta^{-1}}(p)) \\
& =D\sigma_{\theta^{-1}}(p) \cdot \dot{p} \\
& =D\sigma_{\theta^{-1}}(p) \cdot \Big(D\sigma_s(p)^* \cdot \nabla_g f(\sigma_s(p))\Big) \\
& =\big( D\sigma_s(p) \cdot D\sigma_{\theta^{-1}}(p)^* \big)^* \cdot \nabla_g f(\sigma_s(p)) \\
& =\Big(D\sigma_s(p) \cdot D\sigma_{\theta}\big(\sigma_{\theta^{-1}}(p)\big)\Big)^* \cdot \nabla_g f(\sigma_s(p)) \\
& =\Big (D\sigma_s \big( \sigma_{\theta}\big(\sigma_{\theta^{-1}}(p)\big) \big) \cdot D\sigma_{\theta} \big( \sigma_{\theta^{-1}}(p) \big) \Big)^*\cdot \nabla_g f(\sigma_s(p)) \\
& =\Big(D (\sigma_s \circ \sigma_{\theta}) \big( \sigma_{\theta^{-1}}(p) \big) \Big)^* \cdot \nabla_g f(\sigma_s(p)) \\
& = \big(D\sigma_{\theta s}(\theta^{-1}.p)\big)^*\cdot \nabla_g f( \sigma_{\theta s}(\theta^{-1}.p)) \\
& =\nabla_g F_1(\theta^{-1}.p,\theta s,1).
\end{align*}
Here we have used the chain rule and the fact, that for a diffeomorphism 
\begin{equation*}
\phi:M \to M
\end{equation*}
the pullback $\phi^*$ is given by the push-forward with $\phi^{-1}$:
\begin{gather*}
D\phi(p):T_pM \to T_{\phi(p)}M, \\
D\phi(p)^*:T_{\phi(p)}M \to T_pM = D\phi^{-1}(\phi(p)).
\end{gather*}

Clearly both actions are smooth (only smooth maps are involved) and free because they are free on the second factor.
\\

The second statement follows from the general fact that every solution $u: \mathbb{R} \to M $ of an autonomous dynamical system is $\mathbb{R}$-shift invariant and this shift clearly induces a free $\mathbb{R}$-action on the space of solutions.
\\

It remains to show the isomorphism between $\M_1^B(x,y)/S^1$ and $\M_1^A(x,y)/S^1$.

For $u=(p(t),s,0) \in \M_1^B(x,y)/S^1$ set $v:=(s.p(t),s,1)$, then
\begin{align*}
\frac{d}{dt}\big( s.p(t) \big) & = D\sigma_s(p(t))\cdot \dot{p}(t) \\
&=D\sigma_s(p(t)) \cdot \nabla_g \big(f\circ \sigma_s\big)(p(t)) \\
&=D\sigma_s(p(t)) \cdot \big(D\sigma_s(p(t)\big)^* \nabla_g f(\sigma_s(p(t))) \\
&=\nabla_g f(s.p(t)).
\end{align*}

Moreover, $(s.p)^-=s.p^-=s.(s^{-1}.x)=x$ and $s.p^+=s.s^{-1}.y$, so $v$ is in $\M_1^A(x,y)$. The same calculation for $u:=s^{-1}.v$ shows that this mapping is invertible and therefore an isomorphism.
\end{proof}

\begin{definition}
For $x,y \in \C(f)$ we define
\begin{gather*}
\widetilde{\M}_1(x,y):=\M_1(x,y) /\mathbb{R}\cong \M_1(x,y) \cap F_1^{-1}(a), \\
\widetilde{\M}_0(x,y):=\M_1^A(x,y)/ S^1 /\mathbb{R} \cong \M^A_1(x,y) /S^1 \cap F_1^{-1}(a),
\end{gather*}
where $a$ is any regular value between $F_1(A_x)$ and $F_1(B_y)$ or $F_1(A_y)$ respectively. The index $0$ in $\widetilde{\M}_0(x,y)$ emphasizes that this space is just $\widetilde{\M}(x,y)$, the moduli space of (non-jumping) flow lines used to construct the differential in ordinary Morse cohomology.
\end{definition}

\subsubsection{$2$-jump flow lines}
\label{2jfl}

Let $\Delta=\Delta^2$ denote the standard $2$-simplex as subset of $\mathbb{R}^2$ with vertices $(0,0)$, $(0,1)$ and $(1,1)$ and let $\vec{r}=(r_1,r_2)$ denote a point of $\Delta$. Similarly we write $\vec{s}$ for a point $(s_1,s_2)$ in the $2$-torus $T^2 \cong S^1\times S^1$.
\\

This time we use a $T^2$-family of homotopies, now parametrized by $\Delta$:

\begin{definition}[The vector field $V_2$]\label{V_2}
Choose $\tilde{F}_2: W_2 := M \times T^2 \times \Delta \to \mathbb{R}$ satisfying 

\begin{equation*}
\tilde{F}_2(p,\vec{s},\vec{r})=\begin{cases}
    f(p) & \text{if $ \| \vec{r} \| \leq \frac{1}{4} $ },\\
    f(s_1.p) & \text{if $\frac{7}{8} \leq \| \vec{r} \| \leq \frac{9}{8}$ }, \\
    f(s_2.s_1.p) & \text{if $ \frac{5}{4} \leq \| \vec{r} \| \leq \sqrt{2} $}. \\
  \end{cases}
\end{equation*}

Define $h_2: \Delta \to \mathbb{R}$ by $h_2(r_1,r_2):=h(r_1)+h(r_2)$ with $h$ from Section $\ref{1jfl}$. $h_2$ is smooth, strictly increasing with $\|\vec{r}\|$, $Dh_2(0,0)=Dh_2(0,1)=Dh_2(1,1)=0$ and $\mu(i,j)=i+j$. Furthermore $h_2$ satisfies the following: At $\partial \Delta$ the gradient of $h_2$ (with respect to the Euclidean metric) is always pointing along the boundary of $\Delta$. We set 

\begin{equation*}
F_2:=\tilde{F}_2+h_2 
\end{equation*}
and
\begin{equation*}
V_2:=(\nabla_g F_2,0,0,\nabla_{\vec{r}}F_2 ).
\end{equation*} 
\end{definition}

Recall that we work with the simplex $\Delta$ knowing that instead we could use the closed manifold $T^2$ to make everything precise without changing anything important. This is justified by the arguments in Remark \ref{just} and the tangent property of $\nabla_rh_2$ mentioned in the previous definition - here we would extend $h_2$ to $T^2$ (viewed as the unit square with opposing edges identified) by reflection on the diagonal $x=y$ and restrict attention to the subspace $x\leq y$. 

\begin{figure}[hbt]
	\centering
\includegraphics[width=0.50\textwidth]{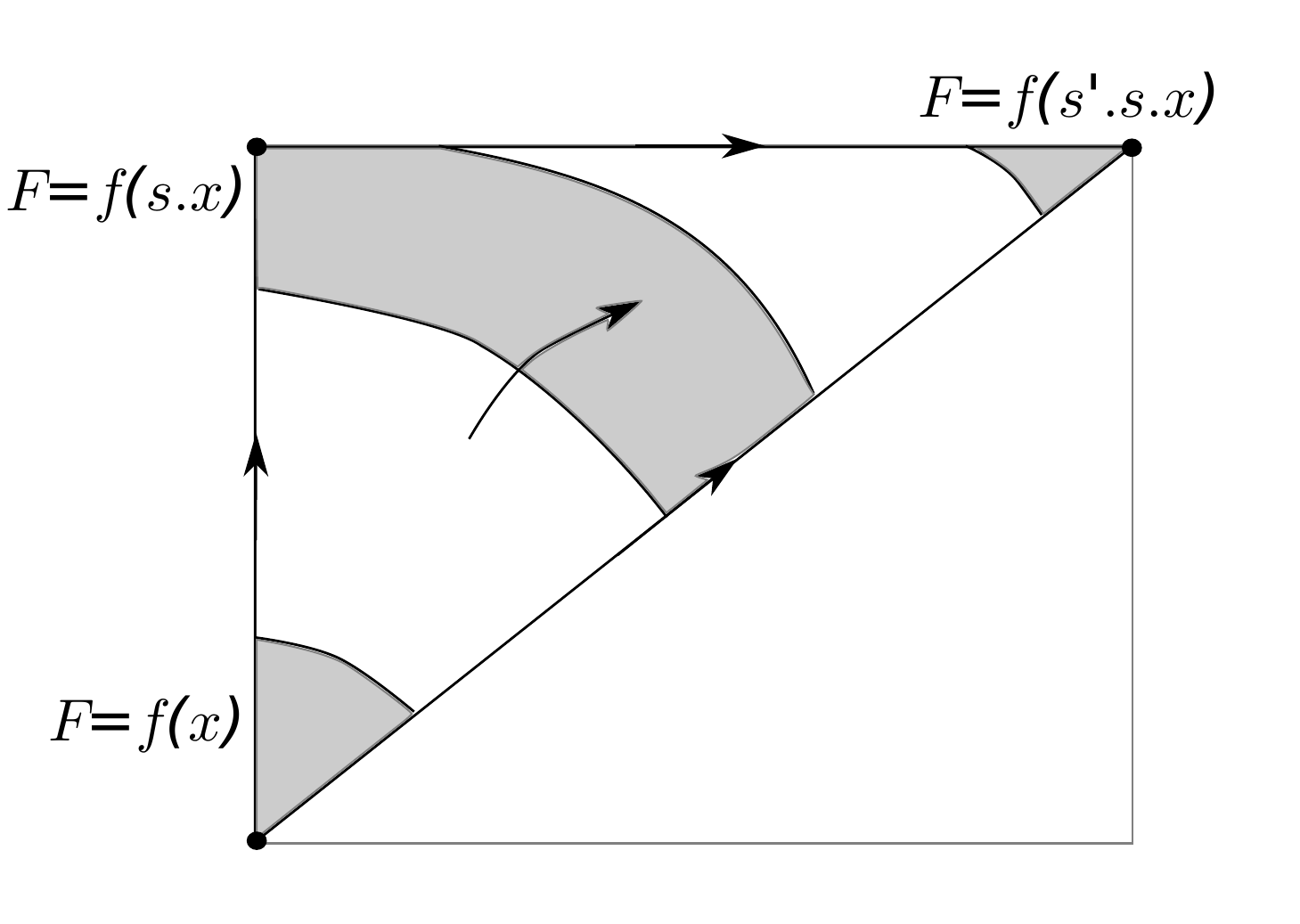}
	\caption{$\nabla h$ on $\Delta^2$}
	\label{bild6}
\end{figure}
 
\begin{lemma}
$F_2$ is Morse-Bott and, for $G_2=g \oplus 1^4$ a product metric on $W_2$, $V_2$ is gradient-like for $F_2$. Moreover, 
\begin{equation*}
\C(F_2)= \bigcup_{x,y,z \in \C(f)}(A_x \cup B_y \cup C_z),
\end{equation*}
where 
\begin{gather*}
A_x:= \{x\} \times T^2 \times \{(0,0)\}, \\
B_y:= \{ (s_1^{-1}.y,s_1) \in M \times S^1 \} \times S^1 \times \{(0,1)\}, \\
C_z:= \{ (s_1^{-1}.s_2^{-1}.z,s_1,s_2) \in M \times T^2 \} \times \{(1,1)\}. \\
\end{gather*}
The corresponding dimensions of the (un-)stable manifolds are given by
\begin{gather*}
\dim W^u(A_x)=n-\mu(x)+4, \\
\dim W^s(A_x)=\mu(x)+2, \\
\dim W^u(B_x)=n-\mu(x)+3, \\
\dim W^s(B_x)=\mu(x)+3, \\
\dim W^u(C_x)=n-\mu(x)+2, \\
\dim W^s(C_x)=\mu(x)+4.
\end{gather*}
\end{lemma}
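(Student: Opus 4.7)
The plan is to mirror the proof of Lemma~\ref{f1mb}, with more bookkeeping reflecting the three vertices of $\Delta$. I split the argument into three tasks: locating $\C(F_2)$, verifying nondegeneracy of the normal Hessians together with the Morse indices, and checking that $V_2$ is gradient-like for $F_2$.

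First I would show that $F_2$ has no critical points whenever $\vec r$ lies in the transition regions where $\tilde F_2$ genuinely depends on $\vec r$. On the three vertex neighbourhoods $\tilde F_2$ is $\vec r$-independent by construction, and there $\nabla_{\vec r} F_2=\nabla h_2$ vanishes only at the vertices themselves; on the transition annuli, by choosing the amplitude of $h$ large enough (as in the $1$-jump case, where we required $h(1)>\max f - \min f$) the term $\nabla h_2$ dominates $\nabla_{\vec r}\tilde F_2$. The tangency property $\nabla h_2\in T\partial\Delta$ together with Remark~\ref{just} rules out spurious criticality on $\partial\Delta$. It then remains to evaluate $DF_2$ on the three vertex slices: at $\vec r=(0,0)$ the function reduces to $f(p)+\mathrm{const}$, at $(0,1)$ to $f(\sigma_{s_1}(p))+\mathrm{const}$, and at $(1,1)$ to $f(\sigma_{s_2}\sigma_{s_1}(p))+\mathrm{const}$. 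The exact calculation from Lemma~\ref{f1mb}, using that each $\sigma_s$ is a diffeomorphism so $D\sigma_s$ is invertible, identifies the critical submanifolds as $A_x$, $B_y$, $C_z$; pairwise disjointness follows because the $T^2$-component of any critical point determines its $M$-component.

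For the Morse--Bott property I would argue that at each critical submanifold the normal Hessian of $F_2$ splits into a Morse block on $M$ coming from $f\circ\sigma_{\vec s}$ (of index $\mu(x),\mu(y),\mu(z)$ respectively, since $\sigma_{\vec s}$ is a diffeomorphism and so preserves the Morse structure of the underlying critical point) and a block $D^2h_2$ on the $\vec r$-factor, with no mixed terms because $\tilde F_2$ is locally $\vec r$-independent at the vertices. Since $h_2(r_1,r_2)=h(r_1)+h(r_2)$ with $h''(0)>0$ and $h''(1)<0$, the Hessian of $h_2$ is nondegenerate of index $0,1,2$ at $(0,0),(0,1),(1,1)$ respectively. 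Thus $F_2$ is Morse--Bott with $\mu(A_x)=\mu(x)$, $\mu(B_y)=\mu(y)+1$, $\mu(C_z)=\mu(z)+2$, and the six dimension formulae drop out of Proposition~\ref{usmbm} using $\dim W_2=n+4$ and $\dim A_x=\dim B_y=\dim C_z=2$. The gradient-like property of $V_2$ then follows as in the $1$-jump case: off $\C(F_2)$ we have $\mathcal{L}_{V_2}F_2=\|\nabla_g F_2\|_g^2+\|\nabla_{\vec r}F_2\|^2>0$, and the required local normal form near each critical submanifold comes from applying the parametric Morse lemma slicewise in $\vec s\in T^2$, exactly as in the derivation of \eqref{vm}.

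The step I expect to be the main obstacle is the exclusion of critical points in the transition regions. Making this rigorous requires an explicit estimate showing that the amplitude of $h$ can be chosen so that $\|\nabla h_2\|$ beats $\|\nabla_{\vec r}\tilde F_2\|$ on the two transition annuli $\tfrac14\leq\|\vec r\|\leq\tfrac78$ and $\tfrac98\leq\|\vec r\|\leq\tfrac54$; this is fiddly because $\tilde F_2$ is specified only on its flat regions, so one must invoke the existence of a smooth interpolation with uniformly controlled $\vec r$-derivative. One must further verify that the tangent-to-$\partial\Delta$ property of $\nabla h_2$ really prevents $\nabla F_2$ from vanishing on an edge, which is precisely where Remark~\ref{just} and the extension to $T^2$ come into play.
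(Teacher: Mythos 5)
Your argument is correct and follows essentially the same route as the paper: reduce to the vertex computations of Lemma~\ref{f1mb} (viewing $F_2$ as a smooth $T^2$-family of Morse functions on $M\times\Delta$), note that $h_2$ adds $0,1,2$ to the index at $(0,0),(0,1),(1,1)$, and read the six dimensions off Proposition~\ref{usmbm} with $\dim W_2=n+4$ and two-dimensional critical submanifolds. The one step you flag as delicate --- excluding critical points in the transition annuli --- is in the paper simply built into the construction (the choice of $h$ in Definition~\ref{V_1} is stipulated so that critical points occur only at the vertices), so no separate estimate is needed there.
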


\begin{proof}
This time $F_2$ is a smooth $T^2$ family of Morse functions on $M \times \Delta$. Using the same arguments as in Lemma \ref{f1mb} we see that $F_2$ is Morse-Bott and $V_2$ gradient-like. The structure of the critical submanifolds $A_x, B_x$ are derived in the same way, because 
\begin{align*}
F_2(p,s_1,s_2,0,0)&=f(p),\\
F_2(p,s_1,s_2,0,1)&=f(s_1.p).
\end{align*}
To calculate $DF_2(p,s_1,s_2,1,1)$ observe that 
\begin{align*}
\tilde{F}_2(p,s_1,s_2,1,1)& =f(s_2.s_1.p) \\
& =\big( f \circ \sigma_{s_2} \circ \sigma_{s_1} \big)(p) \\
& =\big( f \circ \sigma_{s_2s_1} \big)(p).
\end{align*}
Therefore, we can use the arguments of Lemma \ref{f1mb} again to conclude 
\begin{equation*}
C_x= \{ (s_1^{-1}.s_2^{-1}.x,s_1,s_2) \} \times \{(1,1)\}.
\end{equation*}

The dimensions of the (un-)stable manifolds are derived from Proposition \ref{usmbm} together with
\begin{equation*}
\dim W_2=n+4, \ \dim A_x=\dim B_x=\dim C_x=2,
\end{equation*}
and the fact that $h_2:\Delta \to \mathbb{R}$ ``adds'' $0,1,2$ to the Morse index of $A_x, B_x, C_x$ at the corresponding vertices $(0,0),(0,1),(1,1)$.  
\end{proof}

\begin{definition}
A \textit{$2$-jump flow line} from $x$ to $y$ is a solution of
\begin{align}
u: \mathbb{R} \to W_2, & \quad \dot{u}(t)=V_2(u(t)); \label{v2} \\
u^- \in A_x, & \quad u^+ \in C_y. \nonumber
\end{align} 
\end{definition}

Since we have choosen $h_2$ strictly increasing with $\|\vec{r}\|$ and scaled accordingly (cf. the condition $K>(\max f-\min f)$ in Definition \ref{V_1}) we can ensure that only flow lines of the three following types occur: 

The flow lines of $V_2$ live either in the subsets of $W_2$ containing the vertices of $\Delta$, travel from $A_x$ to $B_y$, from $B_y$ to $C_z$ (along the boundary of $\Delta$) or from $A_x$ to $C_z$. They correspond to $0$-,$1$- and $2$-jump flow lines respectively.

\begin{definition}
Regarding $F_2$ we set:
\begin{gather*}
\M^A_2(x,y):=W^u(A_x) \cap W^s(A_y), \\
\M^B_2(x,y):=W^u(B_x) \cap W^s(B_y), \\
\M^C_2(x,y):=W^u(C_x) \cap W^s(C_y), \\
\M^{AB}_2(x,y):=W^u(A_x) \cap W^s(B_y), \\
\M^{BC}_2(x,y):=W^u(B_x) \cap W^s(C_y), \\ 
\M_2(x,y):=W^u(A_x) \cap W^s(C_y).
\end{gather*}
\end{definition}

Observe that again on every space there is a $\mathbb{R}$-action by translation. In addition we have the following

\begin{prop}
The spaces $\M^A_2(x,y), \M^B_2(x,y)$ and $\M^C_2(x,y)$ are equipped with a free $T^2$-action. For $\vec{\tau}=(\tau_1,\tau_2) \in T^2$ the actions are given by
\begin{gather*}
u_A(t)=\big(p(t),s_1,s_2,0,0\big) \mapsto \big(p(t),\tau_1 s_1,\tau_2 s_2,0,0\big), \tag{1} \label{1} \\
u_B(t)=\big(p(t),s_1,s_2,0,1\big) \mapsto \big(\tau_1^{-1}.p(t),\tau_1 s_1,\tau_2 s_2,0,1\big), \tag{2} \label{2} \\
u_C(t)=\big(p(t),s_1,s_2,1,1\big) \mapsto \big(\tau_1^{-1}.\tau_2^{-1}.p(t),\tau_1 s_1,\tau_2 s_2,1,1\big) \tag{3} \label{3}.
\end{gather*}
The quotients are isomorphic to each other. Moreover $S^1$ acts freely on $\M^{AB}_2(x,y)$ and $\M^{BC}_2(x,y)$ by
\begin{gather*}
\theta.u_{AB}(t):=\big(p(t),s_1,\theta s_2,0,r_2(t)\big), \tag{4} \label{4} \\
\theta.u_{BC}(t):=\big(\theta^{-1}.p(t), \theta s_1, s_2,r_1(t),1\big) \tag{5} \label{5} ,
\end{gather*} 
and the quotients of both spaces are isomorphic to $\M_1(x,y)$.
\end{prop}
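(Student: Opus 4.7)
The strategy is to mirror the proof of Proposition \ref{ms1} with bookkeeping for two group factors rather than one, exploiting the fact that the $2$-jump data $\tilde F_2$ restricted to either edge of $\Delta$ emanating from the vertex $(0,1)$ reduces, after a substitution, to $1$-jump data of the form $\tilde F_1$. First I would verify that each of the maps in \eqref{1}--\eqref{5} sends a solution of \eqref{v2} to a solution. For \eqref{1} and \eqref{4} only $s$-coordinates are shifted while the flow takes place at $r_1=0$, where $\tilde F_2$ is independent of $\vec s$; invariance is then immediate. For \eqref{2}, \eqref{3} and \eqref{5} the calculation copies the one carried out in Proposition \ref{ms1}: repeated application of the chain rule, the identity $D\sigma_\theta(p)^* = D\sigma_{\theta^{-1}}(\sigma_\theta(p))$, and the group law $\sigma_{\tau_2\tau_1}=\sigma_{\tau_2}\circ\sigma_{\tau_1}$ produce precisely $\nabla_g F_2$ evaluated at the translated point. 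Freeness in every case is automatic since each action is free on the corresponding $T^2$- or $S^1$-factor.

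For the chain of isomorphisms $\M^A_2(x,y)/T^2 \cong \M^B_2(x,y)/T^2 \cong \M^C_2(x,y)/T^2$, I would imitate the bijection constructed in Proposition \ref{ms1}: a representative $u=(p(t),\vec s,\vec 0)$ of an orbit in $\M^A_2/T^2$ is sent to $(s_1.p(t),\vec s,(0,1))$ and to $(s_2 s_1.p(t),\vec s,(1,1))$. The chain-rule argument of Proposition \ref{ms1}, applied once or twice, shows these are flow lines landing in $\M^B_2(x,y)$ and $\M^C_2(x,y)$ respectively; the endpoint identifications are exactly the twists $s_1^{-1}.(\cdot)$ and $(s_2s_1)^{-1}.(\cdot)$ built into the definitions of $B_y$ and $C_y$. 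Inverses are given by pushing back with $\sigma_{s_1^{-1}}$ and $\sigma_{(s_2 s_1)^{-1}}$, and by construction the maps are equivariant with respect to the $T^2$-actions \eqref{1}--\eqref{3}, so they descend to bijections of quotients.

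The key step is the identification of the mixed moduli spaces with $\M_1(x,y)$. In the $AB$-case the flow stays on $r_1\equiv 0$, where $\tilde F_2(p,s_1,s_2,0,r_2)=\tilde F_1(p,s_1,r_2)$ carries no $s_2$-dependence, so $V_2$ restricts to $V_1$ in the $(p,s_1,r_2)$-directions and the $S^1$-action \eqref{4} kills the dummy $s_2$-factor; projection onto the $W_1$-coordinates then gives the desired bijection. In the $BC$-case, the substitution $q(t):=s_1.p(t)$ transforms $\dot u_{BC}=V_2(u_{BC})$ into $\dot{\tilde u}=V_1(\tilde u)$ for $\tilde u=(q,s_2,r_1)$, because $\tilde F_2(p,s_1,s_2,r_1,1)=\tilde F_1(s_1.p,s_2,r_1)$ by $\sigma_{s_2 s_1}=\sigma_{s_2}\circ\sigma_{s_1}$; the endpoints transform as $s_1^{-1}.x\mapsto x\in A_x$ and $s_1^{-1}s_2^{-1}.y\mapsto s_2^{-1}.y\in B_y$, and the $S^1$-action \eqref{5} exactly collapses the remaining $s_1$-freedom, yielding the bijection with $\M_1(x,y)$.

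The main obstacle is essentially notational: tracking which coordinate is killed by which quotient and verifying that every map constructed is simultaneously $\mathbb{R}$-equivariant (so that it descends after the implicit $\mathbb{R}$-shift quotient) and compatible with the new $T^2$- or $S^1$-actions. No new analytical difficulty beyond Proposition \ref{ms1} arises; all the infinitesimal computations are repetitions of the one performed there, combined with the group relation $\sigma_{s_2s_1}=\sigma_{s_2}\circ\sigma_{s_1}$.
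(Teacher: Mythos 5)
Your proposal is correct and follows essentially the same route as the paper: invariance of the equations under the actions \eqref{1}--\eqref{5} via the chain-rule computation of Proposition \ref{ms1} (using $D\sigma_{\theta}(p)^{*}=D\sigma_{\theta^{-1}}(\sigma_{\theta}(p))$ and $\sigma_{s_2s_1}=\sigma_{s_2}\circ\sigma_{s_1}$), freeness from the action on the torus factor, twist maps $p\mapsto s_1.p$, $p\mapsto s_2s_1.p$ between the quotients at the vertices, and identification of the mixed spaces with $\M_1(x,y)$ by discarding the redundant coordinate. The only cosmetic deviations are that you map $\M^A_2$ into $\M^B_2,\M^C_2$ rather than the reverse and identify $\M^{BC}_2(x,y)/S^1$ with $\M_1(x,y)$ directly via $q=s_1.p$ instead of passing through $\M^{AB}_2(x,y)/S^1$; note only that on the edge $r_1=0$ the function $\tilde F_2$ is independent of $s_2$ but not of all of $\vec{s}$, which is exactly what case \eqref{4} requires.
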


\begin{proof}
Obviously all actions are smooth (only smooth maps are involved) and free (on the $T^2$-factor).

The cases \eqref{1} and \eqref{4} follow directly from the fact that $F_2$ is independent of $\vec{s}$ at $\vec{r}=(0,0)$ and independent of $s_2$ at $\vec{r}\in \{0\}\times [0,1]$.

For \eqref{2} and \eqref{3} a straightforward generalization of the proof of Proposition \ref{ms1} shows that
\begin{align*}
& \frac{d}{dt}\big(\tau_1^{-1}.p_B(t)\big)=\nabla_g \big( f\circ \sigma_{\tau s_1} \big) (\tau_1^{-1}.p_B(t)), \\
& \frac{d}{dt}\big((\tau_1 \tau_2)^{-1}.p_C(t)\big)=\nabla_g \big( f\circ \sigma_{\tau_1 \tau_2 s_1 s_2}\big)\big((\tau_1 \tau_2)^{-1}.p_C(t) \big).
\end{align*}
The reason for this is that $F_1$ and $F_2$ have basically the same structure around the vertices of $[0,1]$ and $\Delta$.

The same argumentation works in the case $\eqref{5}$:
\begin{align*}
\frac{d}{dt}\big( \sigma_{\theta^{-1}}(p(t)) \big) & = D\sigma_{\theta^{-1}}(p(t)) \cdot \dot{p}(t) \\
& = D\sigma_{\theta^{-1}}(p(t)) \cdot \nabla_g F_2(p(t),s_1,s_2,r_1(t),1) \\
& = D\sigma_{\theta}(\theta^{-1}.p(t))^* \cdot \nabla_g F_2(p(t),s_1,s_2,r_1(t),1) \\
& = D\sigma_{\theta}(\theta^{-1}.p(t))^* \cdot \nabla_g F_2(\sigma_{\theta}(\theta^{-1}.p(t)),s_1,s_2,r_1(t),1) \\
& = \nabla_g \big( F_2 \circ \sigma_{\theta} \big) (\theta^{-1}.p(t),s_1,s_2,r_1(t),1) \\
& = \nabla_g  F_2 (\theta^{-1}.p(t),\theta s_1,s_2,r_1(t),1).
\end{align*}

Similiar to Proposition \ref{ms1} we define the isomorphism between the moduli spaces 
\begin{gather*}
\M_2^C(x,y)/T^2 \to \M_2^A(x,y)/T^2, \\
\M_2^B(x,y)/T^2 \to \M_2^A(x,y)/T^2,
\end{gather*}
by
\begin{align*}
&(p(t),s_1,s_2,1,1) \mapsto ((s_1s_2)^{-1}.p(t),s_1,s_2,0,0), \\
&(p(t),s_1,s_2,0,1)\  \mapsto (s_1^{-1}.p(t),s_1,s_2,0,0).
\end{align*}

The isomorphism between $\M^{BC}_2(x,y)/S^1$ and $\M^{AB}_2(x,y)/S^1$ is given by
\begin{equation*}
(p(t),s_1,s_2,r_1(t),1) \mapsto (s_2^{-1}.p(t),s_1s_2,s_2,0,r_1(t)).
\end{equation*}
Note, that this isomorphism is also a consequence of our construction, letting the subsets $M\times T^2 \times \{0\} \times [0,1],\ M\times T^2 \times [0,1] \times \{1\} \subset W_2$ be flow-invariant.

Finally, observe that $\M_2^A(x,y)/T^2 \cong \M_0(x,y)$, simply because both spaces consist of flow lines of $\nabla_gf$. 

Furthermore, $\M_2^{AB}(x,y)/S^1 \cong \M_1(x,y)$ via 

\begin{equation*}
\big(p(t),s_1,s_2,0,r_2(t)\big) \mapsto \big(p(t),s_1,r_2(t)\big).
\end{equation*}

\end{proof}

We conclude that the solutions of $\dot{u}=V_2(u)$ are either flow lines of equations $\eqref{eq1}$ or $\eqref{v1}$, or $2$-jump flow lines $\eqref{v2}$ travelling in $W_2$ from a critical submanifold at $(0,0)\in \Delta$ to a critical submanifold at $(1,1)\in \Delta$. 

\begin{definition}
For $x,y \in \C(f)$ the \textit{moduli space of $2$-jump flow lines} is defined as:
\begin{gather*}
\widetilde{\M}_2(x,y):=W^u(A_x) \cap W^s(C_y)/\mathbb{R} \cong \M_2(x,y) \cap F_2^{-1}(a),
\end{gather*}
where $a$ is any regular value between $(F_2(A_x),F_2(C_z))$.
\end{definition}

\subsubsection{$k$-jump flow lines for $k>2$}
\label{kjfl}

To obtain $\widetilde{\M}_k(x,y)$, proceed as above:
\\

Let $\Delta^k$ be the simplex obtained from $\Delta^{k-1}\subset \mathbb{R}^{k-1}\subset \mathbb{R}^k$ by adding the point $(1,\ldots,1)\in \mathbb{R}^k$ and connecting every vertex of $\Delta^{k-1}$ with it. Choose a $T^k$-family of ``homotopies'' $\tilde{F}_k: M\times T^k \times \Delta^k$ with 
\begin{equation*}
\tilde{F}_k(p,\vec{s},\vec{r})=\begin{cases}
    f(p) & \text{if $ \|\vec{r}\| < \delta $ },\\
    f(s_1.p) & \text{if $1-\delta \leq \|\vec{r}\| < 1+\delta$ }, \\
    f(s_2.s_1.p) & \text{if $ \sqrt{2}-\delta \leq \|\vec{r}\| < \sqrt{2}+\delta $}, \\
    \qquad \cdots & \\
    f(s_k. \cdots s_1.p) & \text{if  $\sqrt{k}-\epsilon \leq \|\vec{r}\|$ },
  \end{cases}
\end{equation*}
for some small $\delta > 0$.

Add $h_k:\Delta^k \to \mathbb{R}$, $h_k(r_1,\ldots,r_k):=\sum_{i=1}^k h(r_i)$. Again, $h_k$ is strictly increasing with $\|\vec{r}\|$, such that the gradient vector field $\nabla h_k$ (w.r.t. the Euclidean metric) is tangent to the boundary $\partial \Delta^k$ and the critical points of $h_k$ are the vertices $P\in \mathbb{R}^k$ of $\Delta^k$, satisfying $\mu(P)=\sum_{i=1}^{k}P_i$. 

Regarding the extension of $h_k$ to the $k$-cube (cf. Remark \ref{just}) we refer to Haiman \cite{Hai}: There is a subdivision of the $k$-cube into $k$-simplices, such that every vertex of a simplex is a vertex of the cube and the intersection of two simplices is a face of both of them - a very interesting problem is to find the minimum number of simplices one needs to do so. But for our purposes the sheer existence is enough to justify our definitions.
\\

This produces a Morse-Bott function $F_k: M\times T^k \times \Delta^k \to \mathbb{R}$, with $k$-dimensional critical submanifolds $A^0_x, A^1_x, \ldots, A^k_x$ for $x \in \C(f)$. Set 
\begin{equation*}
V_k(p,\vec{s},\vec{r}):=\big(\nabla_g F_k(p,\vec{s},\vec{r}),0,\ldots,0,\nabla_{\vec{r}}F_k(p,\vec{s},\vec{r})\big)
\end{equation*}
and define the \textit{moduli space of $k$-jump flow lines} from $x$ to $y$ as
\begin{equation*}
\widetilde{\M}_k(x,y):=W^u(A^0_x)\cap W^s(A^k_y)/\mathbb{R}.
\end{equation*}

Generalizing the above statements and proofs in the obvious manner we see that:
\\

1. The dimensions of the (un-)stable manifolds are given by
\begin{gather*}
\dim W^u(A^i_x)= n-\mu(x)+2k-i , \\
\dim W^s(A^i_x)= \mu(x)+k+i,\quad i=0,\ldots,k.
\end{gather*}
The crucial point here is that in a $\delta$-neighbourhood of the vertices of $\Delta^k$, where the critical submanifolds $A^i$ are situated, $F_k\upharpoonright_{M\times T^k}$ is always of the form 
\begin{equation*} 
(p,\vec{s}) \mapsto F(s_i. \cdots s_1.p).
\end{equation*}

2. Due to the construction, there are no flow lines possible from $A^i$ to $A^j$ for $i>j$.

3. The solutions of 
\begin{equation}
\dot{u}(t)=V_k\big(u(t)\big) \label{eqvk}
\end{equation}
are elements of (the corresponding quotients of the connecting spaces) $\M_0(x,y)$, $\M_1(x,y), \ldots$ or $\M_k(x,y)$ for some $x,y \in \C(f)$. This is because for $l<k,\ i+l<k$, the same argumentation as for $k=2$ shows that $\M^{A^iA^{i+l}}_l(x,y)/T^l$ defined by $F_k$ is isomorphic to $\M_l(x,y)$ defined by $F_l$. Note that the corresponding subsets of $W_k$ containing $l$-jump flow lines are flow-invariant.

\subsection{The operators $R_{2k-1}$}
\label{rks}

So far we have set up the moduli spaces of $k$-jump flow lines consisting of (equivalence classes of) solutions of \eqref{eqvk}.

Due to the construction of $V_k$ every $u \in \M_k(x,y)$ starts at a critical submanifold $A^0_x=\{x\} \times (S^1)^k \times \Delta^k \subset W_k$ and flows to a critical submanifold $A^k_y \subset W_k$, with its projection to $M$ being the $S^1$-orbit of $y$. In the next chapter we will see that given $x$ and $y$ in $\C(f)$ with $\mu(x)-\mu(y)=2k-1$ there are only finitely many such $u$ in $\M_k(x,y)$ and therefore we are able to make the following

\begin{definition}[The operators $R_{2k-1}$]
For a generator $x\in \C(f)$ of $CM^*$ we set
\begin{equation} \label{rk}
R_{2k-1}x:= \sum_{ \mu(y)=\mu(x)-(2k-1) }n_k(x,y) y,
\end{equation}
where $n_k(x,y):=|\widetilde{\M}_k(x,y)|\bmod{2}$, and extend it to general cochains by linearity.
\end{definition}

\section{The moduli spaces $\tilde{\M}_k(x,y)$}
\label{mods}

In this chapter we take a closer look at the moduli spaces defined in Section \ref{ms}. We state and proof according transversality, compactness and gluing theorems which justify the definition of $R_{2k-1}$ given in Section \ref{rks} and are needed for showing $d_{S^1}^2=0$. The proofs of compactness and gluing, using the theory of dynamical systems, follow closely Weber \cite{We}, where these theorems are proven for a Morse-Smale pair $(f,g)$. Throughout this chapter we omit the subscripts $k$ in $F,V,\ldots$, except for the moduli spaces $\widetilde{\M}_k(x,y)$. Moreover, in all following constructions for $\M_k(x,y)$, if $l < k$, we view $\M_l(x,y)$ as a subset of $W_k$.

\subsection{Transversality}
\label{tv}

In this section we show, that by altering the vector field $V$ it is possible to achieve transversality of the intersections of all unstable and stable manifolds of the critical submanifolds of $F$. We will change the vector field $V$ only on small open sets containing no critical submanifolds, so that the global structure of the flow is not harmed. In other words, the new vector field will still be gradient-like. 
\\

In the following we have to keep track of the vector fields to which we assign the (un-)stable manifolds. Therefore $W^i(C,v)$ will denote the (un-)stable manifold of the critical submanifold $C$ associated to the vector field $v$.

\begin{theorem}[Transversality]
There is a gradient-like vector field $V'$ (in the following sections again denotet by $V$) for $F$, differing from $V$ only on small open sets outside critical submanifolds, such that all unstable and stable manifolds intersect transversally.
\end{theorem}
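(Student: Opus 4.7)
The plan is to adapt Smale's classical transversality argument to this Morse--Bott setting by perturbing $V$ on small open regions inside $W_k\setminus\C(F)$, constraining the perturbations to be tangent to the level sets of $F$ so that the gradient-like condition $\mathcal{L}_{V'}(F)>0$ is automatically preserved. Since an unstable and a stable manifold intersect transversally in $W_k$ if and only if their slices by any regular level set $\Sigma=F^{-1}(a)$ between them intersect transversally inside $\Sigma$, the global problem reduces to a sequence of finite-dimensional transversality problems on regular level sets.

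First I would order the critical submanifolds $\{A^i_x\}_{x\in\C(f),\, i=0,\dots,k}$ by their $F$-values and induct on the difference $F(C')-F(C)$. For the base case, let $(C,C')$ be a pair with no critical value in between and let $\Sigma$ be a regular level between them. The slices $W^u(C,V)\cap\Sigma$ and $W^s(C',V)\cap\Sigma$ are submanifolds of $\Sigma$. Pick $p\in W^u(C,V)\cap W^s(C',V)$ where transversality fails, a small neighbourhood $U\ni p$ disjoint from $\C(F)$, and the Banach space $\mathcal{X}_U$ of $C^\ell$ vector fields on $W_k$ supported in $U$ and pointwise tangent to the level sets of $F$. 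I would then apply the Sard--Smale theorem to the universal moduli space
\begin{equation*}
\mathcal{U}=\bigl\{(q,X)\in\Sigma\times\mathcal{X}_U : q\in W^u(C,V+X)\cap W^s(C',V+X)\bigr\},
\end{equation*}
showing it is a Banach manifold whose projection to $\mathcal{X}_U$ is Fredholm. The key computation is that at a non-critical point the level sets of $F$ are codimension-one submanifolds and level-tangent compactly supported vector fields span their tangent spaces, so the linearised perturbation can realise any infinitesimal deformation of $W^s(C',V)\cap\Sigma$ within $\Sigma$. A finite cover of the compact non-transverse locus together with a diagonal sequence argument then yields a single small perturbation of $V$ on an open subset of $W_k\setminus\C(F)$ that makes $W^u(C)\pitchfork W^s(C')$ at $\Sigma$, and hence globally by the flow.

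For the inductive step, suppose transversality has been achieved for all pairs $(B,B')$ with $F(B')-F(B)$ below some bound. To handle a longer pair $(C,C')$, choose the perturbation support $U$ disjoint both from $\C(F)$ and from a tubular neighbourhood of the finitely many already-established transverse intersection loci near intermediate critical submanifolds (one can slide $U$ along the flow away from these, because transversality is an open condition in the $C^1$ topology of vector fields). Small enough perturbations of $V$ supported in this $U$ then preserve the induction hypothesis while, by the base-case argument applied to $(C,C')$, achieving transversality for this pair. Iterating through the finitely many pairs concludes the construction.

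The main obstacle is verifying the surjectivity of the linearised operator in the universal moduli space setup under the two simultaneous constraints that the perturbations be tangent to level sets of $F$ and be supported away from $\C(F)$, since these restrictions significantly reduce the admissible class of perturbations compared with the generic Smale setting. A secondary technical point is handling the $T^k$-symmetry structure visible at the critical submanifolds: the perturbation must not destroy the local Morse--Bott normal form used in Definition \ref{V_2} and Section \ref{kjfl}, but this is automatic because the perturbation is supported outside a neighbourhood of $\C(F)$, so the local coordinates and index computations from Lemma \ref{f1mb} and its generalisation remain valid. Once these points are checked, the induction yields the desired gradient-like vector field $V'$ for which all unstable and stable manifolds of $F$ intersect transversally.
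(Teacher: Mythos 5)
Your overall strategy --- reduce transversality of $W^u(C)$ and $W^s(C')$ to transversality of their slices in a regular level $\Sigma$, then achieve the latter by generic perturbations of $V$ supported away from $\C(F)$ and tangent to the levels of $F$ --- is a legitimate alternative to the paper's route, which follows Nicolaescu/Milnor instead: finite-dimensional Sard gives a diffeomorphism $h$ of the level set, isotopic to the identity, with $h\big(W^u(\cdot)_b\big)$ transverse to $W^s(\cdot)_b$ (Lemma 4.4), and Lemma 4.3 realizes $h$ as the holonomy $\phi^{V'}_{b,a}$ of a gradient-like field altered only in the slab $F^{-1}((a,b))$; no Banach manifolds are needed and gradient-likeness is automatic. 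As written, however, your proof has a genuine gap at its crux: in the Sard--Smale formulation the entire content of the theorem sits in the surjectivity of the linearized operator defining the universal moduli space $\mathcal{U}$, and you explicitly leave it unchecked. The pointwise observation that level-tangent compactly supported fields span $T\Sigma$ is not enough; one must integrate the linearized flow along the orbit segment between the support of the perturbation and $\Sigma$ and show that the resulting variations of the slice $W^s(C',V+X)\cap\Sigma$ fill out a complement of $T\big(W^u(C)\cap\Sigma\big)+T\big(W^s(C')\cap\Sigma\big)$ at the intersection point, under both constraints (tangency to levels, support away from $\C(F)$). This can be done --- a perturbation in a small flow box strictly between $\Sigma$ and $C'$ acts on the holonomy between nearby levels as an essentially arbitrary small shear in the level directions --- but it is precisely the step that must be written out; without it the Banach-manifold structure of $\mathcal{U}$ is also unestablished.

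The inductive bookkeeping is likewise weaker than the paper's. Appealing to openness of transversality in the $C^1$ topology is delicate here because the stable and unstable manifolds are noncompact and depend on the vector field globally, and ``sliding $U$ along the flow'' may fail: non-transverse orbits of a long pair $(C,C')$ can accumulate on broken orbits passing arbitrarily close to the previously treated intersection loci, so no uniform tubular neighbourhood need be avoidable. The paper sidesteps both issues by a different ordering: it sweeps through the critical submanifolds by increasing value of $F$ and, at each step, perturbs only in a slab $F^{-1}((a,b))$ lying strictly above all previously treated critical values and just below the current target, making \emph{all} unstable manifolds coming from below transverse to the one new stable manifold simultaneously (Lemma 4.4 applied to the union $X_b$). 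Since a stable manifold never rises above its critical value and the unstable manifolds are unchanged below the altered slab, every earlier intersection is literally untouched, and no stability or openness statement is needed. If you adopt this level-slab localization and ordering inside your Sard--Smale scheme, and carry out the surjectivity computation indicated above, your argument closes up and proves the same statement.
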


Following the ideas of Nicolaescu \cite{N} in the proof of a similiar statement we need two lemmata:

\begin{lemma}
Given a smooth function $f:M \to \mathbb{R}$ and $v$ a gradient-like vector field for $f$, let $[a,b]$ consist only of regular values of $f$ and for any interval $I$ of regular values set $M_I:=f^{-1}(I)$. 

If $h:M_b \to M_b$ is a diffeomorphism isotopic to the identity, then there exists a gradient-like vector field $w$ which equals $v$ outside $M_{(a,b)}=f^{-1}((a,b))$ such that:
\begin{equation*}
h \circ \phi^v_{b,a} = \phi^w_{b,a}.
\end{equation*}
\end{lemma}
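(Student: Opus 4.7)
The plan is to realize $w$ as a pushforward of $v$ by a level-preserving diffeomorphism of the strip $M_{[a,b]}$ built from an isotopy of $h$ to the identity, using a cutoff so that the modification dies away near $M_a$ and $M_b$.

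First I would pick a smooth isotopy $h_s:M_b\to M_b$, $s\in[0,1]$, with $h_0=\mathrm{id}$ and $h_1=h$, together with a smooth cutoff $\rho:[a,b]\to[0,1]$ satisfying $\rho\equiv 0$ near $a$ and $\rho\equiv 1$ near $b$. Since $[a,b]$ contains only regular values of $f$, the flow of $v$ furnishes diffeomorphisms $\phi^v_{c,c'}:f^{-1}(c')\to f^{-1}(c)$ between all level sets in the strip. Define
\begin{equation*}
\Phi:M_{[a,b]}\to M_{[a,b]},\qquad \Phi(x):=\phi^v_{c,b}\circ h_{\rho(c)}\circ \phi^v_{b,c}(x),\quad c:=f(x).
\end{equation*}
Then $\Phi$ is a smooth level-preserving diffeomorphism of the strip, equals $\mathrm{id}$ near $M_a$ and equals $h$ on $M_b$.

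Next I would set $w:=\Phi_* v$ inside $M_{[a,b]}$ and extend by $v$ outside. The key point for the smooth extension is that on a neighbourhood of $M_a\cup M_b$, where $\rho$ is locally constant, $\Phi$ is given by $\phi^v_{c,b}\circ (\text{fixed diffeo of }M_b)\circ\phi^v_{b,c}$, and a direct computation using the group property $\phi^v_t\circ\phi^v_{c,b}=\phi^v_{c',b}$ shows that $\Phi$ commutes with $\phi^v_t$ there. Hence $\Phi_*v=v$ in those neighbourhoods and the extension is $C^\infty$.

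It remains to check that $w$ is still gradient-like and produces the required flow map. Since $\Phi$ preserves the level sets of $f$ we have $df\circ D\Phi=df$, so $df(w)=df(v)\circ\Phi^{-1}>0$ on the strip; outside the strip $w=v$, so the local normal form near critical submanifolds is inherited unchanged. Finally, because $w=\Phi_*v$ gives $\phi^w_t=\Phi\circ\phi^v_t\circ\Phi^{-1}$, and $\Phi|_{M_a}=\mathrm{id}$ while $\Phi|_{M_b}=h$, we obtain $\phi^w_{b,a}=h\circ\phi^v_{b,a}$ as required. The only real obstacle is the smooth extension across $\partial M_{[a,b]}$, and this is exactly what the constant-$\rho$ assumption near the endpoints buys through the commutation with the $v$-flow.
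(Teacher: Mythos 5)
Your overall strategy -- transport the isotopy along the flow lines to obtain a level-preserving diffeomorphism $\Phi$ of the strip $M_{[a,b]}$ and push the vector field forward -- is the same as in the text, but the step that is supposed to make the extension across $M_b$ smooth is wrong as stated. The group property you invoke, and hence the commutation of $\Phi$ with the flow where $\rho$ is locally constant, holds only for the \emph{normalized} field $\langle v\rangle=\frac{1}{\mathcal{L}_v f}\,v$, whose time parameter is the value of $f$: for $v$ itself the time needed to travel between two fixed levels varies from flow line to flow line, so in general $\Phi\circ\phi^v_t\neq\phi^v_t\circ\Phi$ near $M_b$ even though $\rho\equiv 1$ there. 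Indeed, where $\rho$ is constant one only gets $\Phi_*\langle v\rangle=\langle v\rangle$, hence $\Phi_*v=(\mathcal{L}_vf\circ\Phi^{-1})\,\langle v\rangle$, while $v=(\mathcal{L}_vf)\,\langle v\rangle$; since $\Phi|_{M_b}=h$ moves points inside the level set and $\mathcal{L}_vf$ need not be constant on level sets, your $w=\Phi_*v$ does not agree with $v$ at $M_b$, so the proposed extension is in general not even continuous there (local picture: $f(x_1,x_2)=x_2$, $v=(0,1+x_1^2)$, $h(x_1,b)=(x_1+1,b)$ gives $\Phi(x_1,x_2)=(x_1+1,x_2)$ and $\Phi_*v=(0,1+(x_1-1)^2)\neq v$ near $M_b$). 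Near $M_a$ your argument is fine, because there $\Phi=\mathrm{id}$ outright.

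The repair is exactly the normalization used in the paper's proof: push forward $\langle v\rangle$ rather than $v$, i.e. set $\tilde w:=\Phi_*\langle v\rangle$, observe that $\tilde w=\langle v\rangle$ wherever $\rho$ is locally constant (your commutation computation is valid for the normalized flow), and then rescale by the globally defined positive function $\mathcal{L}_vf$, putting $w:=(\mathcal{L}_vf)\,\tilde w$ on the strip and $w:=v$ outside. Because $\Phi$ preserves the levels of $f$ one has $\mathcal{L}_{\tilde w}f\equiv 1$, hence $\langle w\rangle=\tilde w=\Phi_*\langle v\rangle$; therefore $w=v$ near $M_a\cup M_b$ and the extension is smooth, $\mathcal{L}_wf=\mathcal{L}_vf>0$ keeps $w$ gradient-like, and conjugating the normalized flow by the level-preserving $\Phi$ yields $\phi^w_{b,a}=\Phi|_{M_b}\circ\phi^v_{b,a}\circ(\Phi|_{M_a})^{-1}=h\circ\phi^v_{b,a}$. (Equivalently, keep your $w=\Phi_*v$ but rescale it by $\mathcal{L}_vf/(\mathcal{L}_vf\circ\Phi^{-1})$.) With this correction your argument coincides with the one in the text, where your $\Phi$ appears as $\psi\circ H\circ\psi^{-1}$ for $\psi(t,p)=\phi^v_{t,1}(p)$ and the rescaling is the line $w=(\mathcal{L}_vf)\tilde w$.
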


Here $\phi^v_{b,a}: M_a\to M_b$ is the ($t=1$)-flow of the normalized vector field $\langle v\rangle:= \frac{1}{\mathcal{L}_v f}\cdot v$. If there are no critical values in $[a,b]$ this is a diffeomorphism with inverse $\phi^v_{a,b}$ given by the backward-flow.

\begin{proof}
Without loss of generality assume $a=0, b=1$ and that the isotopy $h_t$ is independent of $t$ near $0$ and $1$.

Define a diffeomorphism 
\begin{equation*}
\psi : [0,1] \times M_1 \to M_{[0,1]}, \quad (t,p) \mapsto \phi^v_{t,1}(p).
\end{equation*}
$\psi^{-1}$ is given by 
\begin{equation*}
y \mapsto \big(f(y),\phi^v_{1,f(y)}(y)\big).
\end{equation*}

Regarding the isotopy $h_t$ there is another diffeomorphism 
\begin{equation*}
H : [0,1] \times M_1 \to [0,1] \times M_1, \quad (t,p) \mapsto (t,h_t(p)).
\end{equation*}

Now let $\tilde{w}$ be the pushforward of $\langle v \rangle$ by $ \psi \circ h_t \circ \psi^{-1}$. Rescale it, 
\begin{equation*}
w:=(\mathcal{L}_v f) \tilde{w},
\end{equation*}
and extend it to a vector field on $M$ coinciding with $v$ outside $M_{[0,1]}$. Then $\langle w \rangle=\tilde{w}$ and therefore 
\begin{equation*}
h \circ \phi^v_{1,0} = \phi^w_{1,0}.
\end{equation*}
\end{proof}

\begin{lemma}
Let $X,Y$ be submanifolds of $M$ and assume that $X$ has a tubular neighbourhood. Then there exists a diffeomorphism $h: M \to M$ smoothly isotopic to the identity such that $h(X) \ti Y$.
\end{lemma}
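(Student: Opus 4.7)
The plan is to perturb $X$ inside its tubular neighbourhood by a small normal displacement chosen generically, then realize this perturbation as an ambient isotopy via the isotopy extension theorem.

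First, fix the tubular neighbourhood $\phi: V \to U \subset M$, where $V$ is an open neighbourhood of the zero section in the normal bundle $NX$. I would introduce a finite-dimensional family of small sections of $NX$ as follows: choose a finite open cover $\{U_i\}_{i=1}^N$ of $X$ trivializing $NX$, a subordinate partition of unity $\{\rho_i\}$, and parametrize by $\mathcal{A} := \mathbb{R}^{kN}$ (with $k = \operatorname{codim} X$) via $a = (v_1,\dots,v_N) \mapsto s_a := \sum_i \rho_i \cdot v_i$. For $a$ in a small neighbourhood $\mathcal{A}_0$ of the origin, $s_a$ takes values in $V$, so we obtain a smooth family of embeddings $h_a := \phi \circ s_a : X \to M$ with $h_0$ the inclusion.

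Next, consider the evaluation map
\begin{equation*}
\Psi : X \times \mathcal{A}_0 \to M, \quad (x,a) \mapsto \phi(s_a(x)).
\end{equation*}
At any $(x,0)$, varying $a$ hits every normal vector at $x$ (since some $\rho_i(x) > 0$), so $d\Psi$ is surjective onto a complement of $T_xX$ in $T_xM$; combined with tangential directions this makes $\Psi$ a submersion. Hence $\Psi \pitchfork Y$. By the parametric transversality theorem (Sard applied to $\Psi^{-1}(Y) \to \mathcal{A}_0$), for almost every $a \in \mathcal{A}_0$ the restriction $h_a = \Psi(\cdot,a)$ is transverse to $Y$. Pick such an $a$, arbitrarily small. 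The straight-line path $t \mapsto h_{ta}$, $t\in[0,1]$, is then a smooth isotopy of embeddings $X \hookrightarrow M$ supported inside $U$, from the inclusion to $h_a$, with $h_a(X) \pitchfork Y$.

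Finally, extend this isotopy of $X$ to an ambient isotopy of $M$. Since $X$ is compact (it is a submanifold of the closed manifold $M$ in our setting) and the isotopy stays inside $U$, the isotopy extension theorem produces a smooth isotopy $H_t : M \to M$ with $H_0 = \operatorname{id}$ and $H_1(X) = h_a(X)$; concretely one may take the flow of a time-dependent vector field on $U$ extending $\tfrac{d}{dt}h_{ta}$ along $h_{ta}(X)$, cut off by a bump function equal to $1$ on $h_{ta}(X)$ and vanishing outside a compact subset of $U$. Setting $h := H_1$ gives a diffeomorphism of $M$, smoothly isotopic to the identity, with $h(X) \pitchfork Y$.

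I expect the main obstacle to be the transversality-with-parameters step: one has to verify that the finite-dimensional perturbation family $\mathcal{A}$ really produces a submersion $\Psi$, which hinges on the partition-of-unity trick so that every point of $X$ receives a full range of normal perturbations. The extension step is essentially bookkeeping once one invokes isotopy extension, and the restriction ``$h$ isotopic to the identity'' is automatic from the construction through $H_t$.
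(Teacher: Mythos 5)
Your proof is correct, but it follows a genuinely different route from the paper's. The paper reproduces Milnor's argument from the h-cobordism lectures: it takes a \emph{product} tubular neighbourhood $k:X\times\mathbb{R}^m\to U$, applies Sard's theorem directly to the map $g=pr_2\circ (k^{-1})\restriction_{U\cap Y}$ (the fiber translate $k(X\times\{x\})$ fails to be transverse to $Y$ exactly when $x\in g(\C(g))$), and then writes down the ambient diffeotopy explicitly as the flow, in the $\mathbb{R}^m$-factor only, of a cut-off constant vector field carrying $0$ to a regular value $x$. You instead generate a finite-dimensional family of normal perturbations via a partition of unity, get transversality for almost every parameter from the parametric transversality theorem applied to the evaluation map $\Psi$, and then invoke the isotopy extension theorem. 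Each approach buys something: yours does not need the tubular neighbourhood to be trivialized (the paper's written proof implicitly assumes the normal bundle of $X$ is a product $X\times\mathbb{R}^m$), while the paper's construction is more elementary and fully explicit and does not use compactness of $X$, whereas you use it twice (finite trivializing cover, isotopy extension) -- harmless here, since the submanifolds fed into this lemma in the transversality proof are compact torus-families of spheres. Two small points to tidy up in your version: you verify that $\Psi$ is a submersion only at points $(x,0)$, so you should shrink $\mathcal{A}_0$ (openness of the submersion condition plus compactness of $X$) to ensure $\Psi\pitchfork Y$ on all of $X\times\mathcal{A}_0$ before applying parametric Sard; and $\mathcal{A}_0$ should be taken star-shaped about the origin so that the straight-line path $t\mapsto h_{ta}$ stays inside the family and each $h_{ta}$ remains an embedding into $U$.
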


\begin{proof}
The proof is from Milnor \cite{Mi3}. 

Let $m$ be the dimension of the normal bundle of $X$ in $M$ and let
\begin{gather*}
k:X \times \mathbb{R}^m \to U \subset M, \\
k\restriction_{X\times \{0\}}=id_X
\end{gather*}
be a tubular neighbourhood of $X$. Set $Y_0:=U \cap Y$ and $g:= pr_2 \circ (k^{-1})\restriction_{Y_0}$, where $pr_2$ is the projection onto $\mathbb{R}^m$.

For $x\in \mathbb{R}^m ,\ k(X\times \{x\})$ does not intersect $Y$ transversally iff there is $\xi\in Y_0$ with $x=g(\xi)$ and $Dg(\xi)$ has not maximal rank. Since $k$ is a diffeomorphism and $\operatorname{rank} (pr_2)$ is constant, this means $\xi \in \C(g)$. Therefore, Sard's theorem (cf. Milnor and Stasheff \cite{Mi1}) implies that for $x$ outside a set of measure zero the intersection will be transversal.

We construct the isotopy $h_t$ as follows:

Choose $x \in \mathbb{R}^m \setminus g\big(\C(g)\big)$ and a smooth vector field $v$ on $\mathbb{R}^m$, satisfying:
\begin{equation*}
v(y)=\begin{cases}
    x & \text{if $ \| y \| \leq \| x \| ,$ }\\
    0 & \text{if $ \| y \| \geq 2\| x \| .$ }
  \end{cases}
\end{equation*}
The flow $\phi_t$ of $v$ is defined for all $t \in \mathbb{R}$, $\phi_0=id$ and $\phi_1$ maps $0$ to $x$. 

Finally we define $h_t : M \to M$ by
\begin{equation*}
h_t(p)=\begin{cases}
    k\big(q,\phi_t(y)\big) & \text{if $p=k(q,y) \in U ,$ }\\
    p & \text{if $ p \not\in U .$ }
  \end{cases}
\end{equation*}
Clearly, $h_t$ is a diffeomorphism for all $t\in [0,1]$, depending smoothly on $t$, satisfying $h_0=id$ and $h_1(X)\ti Y.$

\end{proof}

Now we are able to prove the Transversality Theorem:

\begin{proof} 
In the last chapter we have seen that \begin{equation*}
\C(F)=\C(F_k)=A^0 \cup A^1 \cup \cdots \cup A^k
\end{equation*} 
with $A^i:=\bigcup_{x\in \C(f)}A^i_x$ and flow lines from $A^i$ to $A^j$ only occur if $i \leq j$. More precisely, we have constructed $F$ such that, for $x,y \in \C(f)$, the inequality $F(A^i_x)<F(A^j_y)$ implies $i<j$ or $i=j$ and $\mu(x)< \mu(y)$.

Next, observe that in general 
\begin{equation*}
W^u(A^i) \ti W^s(A^j) \Longleftrightarrow W^u(A^i)_c \ti W^s(A^j)_c \quad \forall i\leq j, 
\end{equation*}
where $W^i(A)_c:=W^i(A)\cap F^{-1}(c)$ for $c$ a regular value. 
\\

Since we have assumed that $(f,g)$ is Morse-Smale, we have
\begin{align*}
&W^u\big( (x,\vec{s},\vec{0})\in A^0_x,V \big)  = W^u(x,\nabla_g f)\times T^k \times \{(0,\ldots,0)\} \\
\ti \ &W^s(y,\nabla_g f)\times T^k \times \{(0,\ldots,0)\} = W^s\big( (y,\vec{s},\vec{0}) \in A^0_y,V\big), \\
& \quad \Longrightarrow  W^u(A^0_x,V)  \ti W^s(A^0_y,V) \quad \forall x,y \in \C(f) .
\end{align*}

Now start with $y\in \C_0(f)$, the minimum of $F$, choose a regular value 
\begin{equation*}
\displaystyle{ c \in \big(\max_{x\in \C(f)}F(A^0_x),F(A^1_y)\big) } 
\end{equation*}
and $\epsilon > 0$ so small that $(c-2\epsilon,c)$ contains no critical values. 

Set $a:=c-2\epsilon,\ b:=c-\epsilon$. Then $X_b:=\bigcup_{x\in \C(f)}W^u(A^0_x,V)_b$ and $W^s(A^1_y,V)_b$ are smooth $T^k$-families of spheres embedded in $W_b$. 

Therefore, they are smooth submanifolds (with boundary) of $W_b$ admitting tubular neighbourhoods and we can use Lemma $4.4$ to obtain $h: W_b \to W_b$ isotopic to the identity, making the intersection of $X_b$ and $W^s(A^1_y,V)_b$ transversal. 

Lemma $4.3$ asserts the existence of a gradient-like vector field $V'$, equal to $V$ outside $W_{(a,b)}$, with 
\begin{equation*}
\phi^{V'}_{b,a}=h\circ \phi^{V}_{a,b}.
\end{equation*}
We have
\begin{gather*}
W^u(A^0_x,V')_a=W^u(A^0_x,V)_a \ \forall x \in \C(f), \\
W^s(A^1_y,V')_b=W^s(A^1_y,V)_b, 
\end{gather*}
and therefore for all $x\in \C(f)$:
\begin{align*}
W^u(A^0_x,V')_b & =\phi^{V'}_{b,a}\big(W^u(A^0_x,V')_a\big) \\
& =h\circ \phi^{V}_{a,b}\big( W^u(A^0_x,V')_a \big) \\
& =h\circ \phi^{V}_{a,b}\big( W^u(A^0_x,V)_a \big) \\
& =h\big( W^u(A^0_x,V)_b \big)\\
&\ \ti W^s(A^1_y,V)_b=W^s(A^1_y,V')_b.
\end{align*}

Repeating these steps, first for all $y\in \C(f)$ with $X_b$ uniting all unstable manifolds 
\begin{equation*}
\bigcup_{x\in \C(f)}W^u(A^0_x,V)_b\ \text{ and} \bigcup_{\mu(z)<\mu(y)}W^u(A^1_z,V)_b,
\end{equation*}
then continuing successively with $A^2, \ldots, A^k $, in every step choosing the values $a,b$ such that $V$ has not been altered on $W_{(a,b)}$ previously. After a finite number of steps ($\C(f)$ is finite, $\C(F)$ is a finite union of submanifolds of $W$), we have achieved transversality of all intersections of unstable and stable manifolds.
\end{proof}

Transversality of the intersection of $W^u(A^i_x,V)$ and $W^s(A^j_y,V)$ implies the following dimension formulae:
\begin{cor} \label{dims}
$\M(A^i_x,A^j_y)$ and $\widetilde{\M}(A^i_x,A^j_y)$ are smooth finite-dimensional submanifolds of $W$ with
\begin{align*}
\dim \M(A^i_x,A^j_y)&= \dim\big( W^u(A^i_x,V)\ti W^s(A^j_y,V) \big) \\
&= \mu(y)-\mu(x)+k+j-i,\\
\dim \widetilde{\M}(A^i_x,A^j_y)&= \mu(y)-\mu(x)+k+j-i -1.
\end{align*}
In particular, 
\begin{equation*}
\dim \widetilde{\M}_k(x,y)=\mu(y) - \mu(x) + 2k -1.
\end{equation*}

\end{cor}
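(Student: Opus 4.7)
The plan is that the corollary is a direct computational consequence of the Transversality Theorem together with the dimension formulas for the unstable and stable manifolds established at the end of Section 3.2. The ambient manifold is $W=W_k=M\times T^k\times \Delta^k$, so $\dim W = n+2k$. Since $W^u(A^i_x,V)$ and $W^s(A^j_y,V)$ now meet transversally for every $i\le j$, their intersection $\M(A^i_x,A^j_y)$ is a smooth submanifold of $W$, and its dimension is given by
\begin{equation*}
\dim \M(A^i_x,A^j_y)=\dim W^u(A^i_x,V)+\dim W^s(A^j_y,V)-\dim W.
\end{equation*}
Substituting the formulas $\dim W^u(A^i_x,V)=n-\mu(x)+2k-i$ and $\dim W^s(A^j_y,V)=\mu(y)+k+j$ gives $\mu(y)-\mu(x)+k+j-i$, as claimed.

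For the moduli space $\widetilde{\M}(A^i_x,A^j_y)$ I would argue that the $\mathbb{R}$-action by time shift acts freely: since $V$ is gradient-like with respect to $F$ and any $u\in \M(A^i_x,A^j_y)$ has $u^-\in A^i_x$ and $u^+\in A^j_y$, the Lie derivative $\mathcal{L}_VF$ is strictly positive along $u$ away from critical submanifolds, so $u$ is non-constant and its $\mathbb{R}$-orbit is embedded in $W$. The quotient is therefore a smooth manifold of one less dimension, giving
\begin{equation*}
\dim \widetilde{\M}(A^i_x,A^j_y)=\mu(y)-\mu(x)+k+j-i-1.
\end{equation*}

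Finally, by the very definition of the moduli space of $k$-jump flow lines,
\begin{equation*}
\widetilde{\M}_k(x,y)=W^u(A^0_x,V)\cap W^s(A^k_y,V)/\mathbb{R}=\widetilde{\M}(A^0_x,A^k_y),
\end{equation*}
so setting $i=0$ and $j=k$ yields $\dim \widetilde{\M}_k(x,y)=\mu(y)-\mu(x)+2k-1$.

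The only real point requiring care is the dimension bookkeeping (counting both torus factors and simplex factors in $\dim W_k$) and noting that the $\mathbb{R}$-action is free; the transversality of the intersection has already been secured by the preceding theorem, so there is no genuine geometric obstacle here. Everything else is a plug-and-chug verification.
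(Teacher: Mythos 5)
Your proposal is correct and matches the paper's (implicit) argument: the corollary is stated there as an immediate consequence of the Transversality Theorem, obtained exactly as you do by the transversal-intersection count $\dim W^u(A^i_x,V)+\dim W^s(A^j_y,V)-\dim W_k$ with the formulas $\dim W^u(A^i_x)=n-\mu(x)+2k-i$, $\dim W^s(A^j_y)=\mu(y)+k+j$ and $\dim W_k=n+2k$, followed by dividing out the $\mathbb{R}$-shift. The only cosmetic difference is that the paper secures smoothness of the quotient and the drop in dimension by one via the identification $\widetilde{\M}(A^i_x,A^j_y)\cong \M(A^i_x,A^j_y)\cap F^{-1}(a)$ for a regular value $a$ (using that $\mathcal{L}_VF>0$ away from the critical submanifolds), rather than appealing only to freeness of the $\mathbb{R}$-action.
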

\newpage

\subsection{Compactness}
\label{comp}

We continue to examine the properties of the moduli spaces by showing that $\widetilde{\M}_k(x,y)$ can be compactified using so-called ``broken flow lines'' $(u_1,\ldots,u_m)$. These broken flow lines are defined as in Definition \ref{broken}, except that now they are allowed to consist of elements of different moduli spaces $\M_l(x',y')$ with $0\leq l \leq k$.

\begin{definition}
A \textit{broken flow line of type $(m;\Gamma)$} is a $m$-tuple $(u_1,\ldots,u_m)$, such that
\begin{gather*}
u_i \in \M_{\Gamma_i}(x_i,x_{i+1}), \\ 
 x_i \in \C(f)\ (i=1,\ldots,m), \\
 \Gamma \in \mathbb{N}^m: \Gamma_i \in \{0,1,\ldots,k\}. 
\end{gather*}
A subset $K \subset \widetilde{\M}_k(x,y)$ is called \textit{compact up to broken flow lines of type $(m;\Gamma)$}, if for every sequence $(w_j)_{j\in\mathbb{N}}\subset K$ there exists a broken flow line $(u_1,\ldots,u_m)$ of type $(m;\Gamma)$ with
\begin{equation*}
O(w_j) \longrightarrow (u_1,\ldots,u_m) \text{ for } j \to \infty.
\end{equation*}
Here convergence is meant with relation to the Riemannian distance $d$ on $W_k$ (again secretly identifying $W_k=M\times T^k \times \Delta^k$ with the closed manifold $M\times T^k\times T^k$ as discussed in Section $3.2$). This implies $u^+_i=u^-_{i+1}$, i.e. a broken flow line which is limit of a sequence in $\widetilde{\M}_k(x,y)$ does not ``jump'' along critical submanifolds, although it will in general consist of jumping parts (cf. Fig. \ref{bild2}).
\end{definition}

Recall that $\widetilde{\M}_0(x,y)$ is naturally isomorphic to the moduli space $\widetilde{\M}(x,y)$ of ordinary flow lines associated to $(f,g)$, which we assumed to be a Morse-Smale pair. Therefore, Theorem \ref{mcomp} already tells us how to compactify these spaces.

In the following we restrict attention to the case $\dim \widetilde{\M}_k(x,y)=0,1$, because higher-dimensional moduli spaces are irrelevant for our purposes.

\begin{figure}[hbt]
	\centering
\includegraphics[width=0.90\textwidth]{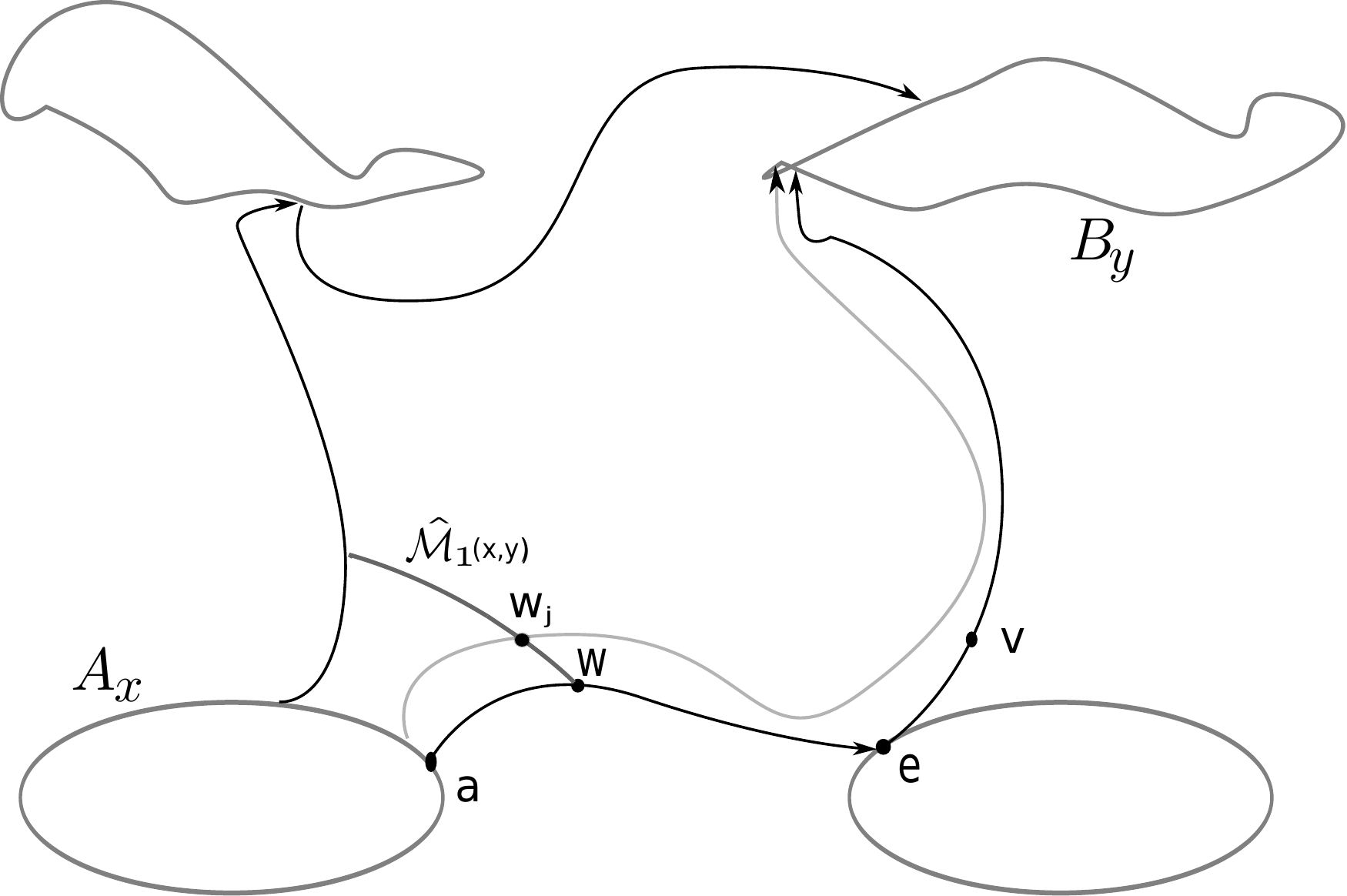}
	\caption{Broken flow lines for $k=1$}
	\label{bild2}
\end{figure}

\begin{theorem}
The moduli spaces of $k$-jump flow lines $\widetilde{\M}_k(x,y)$ are compact up to broken flow lines of type  $(m;\Gamma)$, where
\begin{align*}
&m=1,\Gamma=k, \text{ that is $\widetilde{\M}_k(x,y)$ is already compact, or } \\
&m=2,\Gamma \in \{(i,j)| i + j=k \text{ and } i,j \in \{0,1,\ldots,k\}\}.  
\end{align*}
\end{theorem}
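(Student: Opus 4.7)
The plan is to follow Weber's dynamical-systems compactness argument \cite{We}, adapted from the Morse to our Morse-Bott setting in which the critical set of $F$ consists of the $k$-dimensional tori $A^0_x,A^1_x,\ldots,A^k_x$. Fix a sequence $(w_j)_{j\in\mathbb{N}}$ in $\widetilde{\M}_k(x,y)$. Since $W_k$ is compact (via the closing trick of Remark \ref{just}), I would choose regular values $F(A^0_x)=c_0<c_1<\cdots<c_N=F(A^k_y)$ of $F$, record the times $t_{j,\ell}$ at which $w_j$ crosses $F^{-1}(c_\ell)$, and extract a subsequence on which every $w_j(t_{j,\ell})$ converges. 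Either the time differences $t_{j,\ell+1}-t_{j,\ell}$ stay bounded (in which case the corresponding piece of $w_j$ converges locally uniformly to a genuine flow-line segment) or they diverge; the latter forces the trajectory to spend arbitrarily long time inside a small tube around some critical submanifold $A^i_z$.

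In the divergent case I would invoke the Morse-Bott Lemma \ref{mbl} together with the explicit local normal form of $V$ established in Section \ref{1jfl} to pass to linearized dynamics: on $E^s$ the flow is a strict contraction, on $E^u$ a strict dilation, while the tangential $TA^i_z$-direction is flow-invariant with no horizontal drift (cf.\ Remark \ref{tangent}). The resulting exponential dichotomy rules out drift along $A^i_z$ in the limit and forces the two pieces of $w_j$ on either side of the lingering interval to converge, after reparametrization, to flow-line segments whose endpoints coincide at a single point of $A^i_z$. Iterating, the subsequence converges to a broken flow line $(u_1,\ldots,u_m)$ with $u_\ell\in\M_{\Gamma_\ell}(x_\ell,x_{\ell+1})$ and $u_\ell^+=u_{\ell+1}^-$ as required by the definition.

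Next, the jump count is preserved: by construction a flow line of $V$ can move from $A^i$ to $A^j$ only if $i\leq j$, so the intermediate critical submanifolds encountered are $A^{i_1}_{z_1},\ldots,A^{i_{m-1}}_{z_{m-1}}$ with $0=i_0\leq i_1\leq\cdots\leq i_m=k$, whence $\sum_\ell\Gamma_\ell=\sum_\ell(i_\ell-i_{\ell-1})=k$. The transversality Corollary \ref{dims} gives $\dim\widetilde{\M}_{\Gamma_\ell}(x_\ell,x_{\ell+1})\geq 0$; summing yields
\begin{equation*}
\sum_{\ell=1}^{m}\bigl(\mu(x_{\ell+1})-\mu(x_\ell)+2\Gamma_\ell-1\bigr)=\mu(y)-\mu(x)+2k-m\geq 0,
\end{equation*}
so $m\leq\dim\widetilde{\M}_k(x,y)+1\leq 2$. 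When $m=1$ the broken flow line is an honest element of $\widetilde{\M}_k(x,y)$ (hence $\Gamma=k$); when $m=2$ the pair $(\Gamma_1,\Gamma_2)=(i,j)$ satisfies $i+j=k$ with $i,j\in\{0,\ldots,k\}$, which is exactly the allowed type in the statement.

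The main obstacle will be the rigorous treatment of the long-lingering case at a Morse-Bott critical submanifold, i.e.\ ruling out horizontal drift along $A^i_z$. This demands quantitative exponential contraction/dilation estimates on the normal bundle, together with control of how the $T^k$-coordinates of $w_j$ may move before the flow escapes the tube. Because the submanifolds $A^i_z$ are the flow-invariant slices $\{\text{orbit of }z\}\times\{\text{vertex}\}$ on which $V$ has identically vanishing tangential component (as seen from Definition \ref{V_2} and its $k$-fold analogue), the drift problem reduces to pure normal-bundle estimates and the analysis is parallel to the Morse case in \cite{We}, with the torus factor merely carrying parameters.
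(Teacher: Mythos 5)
Your proposal is correct and follows essentially the same route as the paper: Weber's dynamical-systems compactness scheme, reduced to the Morse case on slices with frozen torus coordinates (since the $\vec{s}$-component of $V$ vanishes identically), the monotone-vertex combinatorics giving $\sum_\ell\Gamma_\ell=k$, and the index count via Corollary \ref{dims}; your telescoping inequality is just a compact reformulation of the paper's chained estimates over jumping and non-jumping pieces. Note only that your final step $m\leq\dim\widetilde{\M}_k(x,y)+1\leq 2$ implicitly uses the restriction $\dim\widetilde{\M}_k(x,y)\leq 1$, which the paper likewise imposes immediately before the theorem.
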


\begin{proof}
We adapt the proof from Weber \cite{We} to our situation: For a regular value $c\in (F(A^0_x),F(A^k_y))$ set 
\begin{equation*}
\widetilde{\M}_k(x,y):=W^u(A^0_x)\cap W^s(A^k_y) \cap F^{-1}(c)
\end{equation*}
and let $(w_j)_{j\in \mathbb{N}}$ be a sequence in $\widetilde{\M}_k(x,y)$. 

Then there is a subsequence converging to some element $w$ of the compact set $F^{-1}(c)$. We denote the subsequence again by $(w_j)$, as we will always do in the following when choosing a subsequence. 

Clearly, $O(w) \in \M_l(z,z')$ for some $z,z' \in \C(f)$ and $l\in\{0,1,\ldots,k\}$, because $V$ is gradient-like. Let $\Phi_t$ denote the flow of $V$; this map is continuous and therefore 
\begin{equation*}
O(w)=\bigcup_{t\in \mathbb{R}}\Phi_t(w) \subset \overline{\M_k(x,y)}
\end{equation*}
is a subset of the closure of $\M_k(x,y)$. Using continuity again we conclude 
\begin{equation*}
a:=\lim_{t\to -\infty}\Phi_t(w) \text{ and } e:=\lim_{t\to \infty}\Phi_t(w) \in \overline{\M_k(x,y)}
\end{equation*}
are also elements of the closure of $\M_k(x,y)$.

\textbf{Claim:} If $z' \neq y$, then there is a flow line in $\M_{l'}(z',y)$ for some $l'\in \{0,1,\ldots,k\}$. In other words:
\begin{equation*}
z'\neq y \Longrightarrow  \ \exists v \in W^u(e) \cap \overline{\M_k(x,y)} \text{ and } v\neq z'.
\end{equation*}

\textbf{Proof:} (by contradiction)
At $e$ we have a splitting of the tangent space (cf. Lemma \ref{mbl})

\begin{equation*}
T_eW= E^0 \oplus E^u \oplus E^s.
\end{equation*}

The Hartman-Grobman theorem for non-hyperbolic critical points (see Kirchgraber and Palmer \cite{KP}) asserts that there exists a homeomorphism $h: U \subset W \to N \subset T_eW$ with $h(e)=0$, such that 

\begin{equation*}
h\big(\Phi_t(p)\big) = \mathcal{A}_t\big(h(p)\big),
\end{equation*}
where $\mathcal{A}_t$ is the flow of the linearization $DV(e)$ of $V$ on $T_eW$. The restriction of $\mathcal{A}_t$ to $E^u \oplus E^s$ is the linear map $A_t$ of Remark \ref{tangent} and $h$ identifies a neighbourhood of $e$ in $W^i(e)$ with a neighbourhood of zero in $E^i$, where $\mathcal{A}_t$ acts strictly expanding or contracting respectively.

Assume that $w_j,w \in U$ (otherwise let them flow with $\Phi_t$ sufficiently long and choose a subsequence) and identify in the following $w_j$ and $w$ with their images under $h$.

Now suppose the above statement is not true. Since the flows $\Phi_t$ and $\mathcal{A}_t$ are conjugate we are able to transfer the problem to $T_eW$. This means, that for every $\epsilon > 0$ there exists $\delta > 0$, such that $S \subset E^u$, a sphere of radius $\epsilon$, admits a $\delta$-neighbourhood $B \subset T_eW$ with $B\cap \M_k(x,y)=\emptyset$.

Choose $S,B \subset N$ and assume $\|w\| < \frac{\delta}{2}$ (otherwise use again the flow and choose a subsequence).
Now $\mathcal{A}_t$ is linear on $E^u \oplus E^s$, so we can write it in the following form 
\begin{equation*}
\mathcal{A}_t(q)\restriction_{E^u \oplus E^s} = (A^u_tq^u, A^s_tq^s)
\end{equation*}
for $q=(q^u, q^s) \in E^u \oplus E^s \cap N$ and $A^u_t\oplus A^s_t=A_t$.
Since $A^u_t$ is a strict dilatation for $t> 0$ and $0$ is its only fixed point, there exists $t_0 > 0$ with $\|A^u_tw^u_j\| > \epsilon$. On the other hand $A^s_t$ is a strict contraction for $t> 0$. Therefore, for $j$ large enough, such that $\|w_j\| < \delta$, we have $\|A^s_tw^s_j\| < \delta$ for positive $t$.
But this implies that the flow line $O(w_j)$ through $w_j$ hits $B$, contradicting our assumption.

Moreover, this shows that locally near critical points in critical submanifolds the flow lines through $w_j$ converge uniformly to the broken flow line $\big(O(w),O(v)\big)$.
Outside neighbourhoods of critical submanifolds the flow lines through $w_j$ converge to $O(w)$ on compact time intervals $J\subset \mathbb{R}$, because the map
\begin{gather*}
W\times W\times J \to \mathbb{R}, \\
(p,p',t) \mapsto d(\Phi_t(p),\Phi_t(p'))
\end{gather*}
is smooth and therefore Lipschitz continuous since $W\times W\times J$ is compact. This means
\begin{equation*}
d(\Phi_t(p),\Phi_t(p')) \leq C\cdot d(p,p'),
\end{equation*}
for all $(p,p',t) \in W\times W\times J$ and a constant $C=C(W,V,J)>0$,
implying the asserted convergence.

To show convergence to $O(v)$ replace the regular value $c$ with $c'=F(v)$ and $w_j$ with $\widetilde{w}_j=O(w_j)\cap F^{-1}(c')$ and argue as above.
\\

Now repeat everything with respect to the backward flow $\Phi_{-t}$ and $a=O(w)^-$, the starting point of $O(w)$, to see that if $z \neq x$, then there is $v' \in W^s(a) \cap \overline{\M_k(x,y)}, \ v'\neq z$ and we have uniform convergence of $O(w_j)$ to $(O(v'),O(w))$ on compact time intervals.
\\

Finally, after a finite number of steps ($\C(f)$ is finite, $\C(F)$ is a finite collection of submanifolds of $W$), we end up with a broken flow line of type $(m,\Gamma)$, starting at $A^0_x$ and ending in $A^k_y$ in its most general form
\begin{equation} \label{bfl}
(u^0_1,\ldots,u^0_{n_0},v_1,u^1_1,\ldots,u^1_{n_1},v_2,u^2_1,\ldots,u^{m_b-1},v_{m_b},u^{m_b}_1,\ldots,u^{m_b}_{n_{m_b}}).
\end{equation}
Here the $u$'s are $0$-jump flow lines, the $v_i$ are elements of $\M_{l_i}(z_{l_i},z_{l_i}')$ with $i=1,\ldots,m_b \ (1 \leq m_b,l_i \leq k),$ and 

\begin{equation*}
m=m_b+m_0=m_b+n_0 +\ldots+n_{m_b}. 
\end{equation*}
Here $m_b\leq k$, simply because $\Delta^k$ has only $k+1$ vertices and $V$ is gradient-like.
\\

To finish the proof we must show that only those types mentioned in the theorem can occur. During the next paragraph let $m_0=0$, i.e. $m=m_b$ - in the following ordinary flow lines are irrelevant.
\\

That there are broken flow lines of type $(m,\Gamma)$ with $m>2$ is due to our construction using the simplices $\Delta^k$ and situating the critical submanifolds at their vertices - for $k>2$ we get a little bit more than we need: 

In $\Delta^2$ two edges correspond to $1$-jump flow lines and one to $2$-jump flow lines; here a $2$-jump flow line can break up into two $1$-jump flow lines and this is exactly how we want it to be. But in $\Delta^3$ we have one redundant edge (corresponding to $1$-jump flow lines from $f(s_1.p)$ to $f(s_2.s_1.p)$) which makes a broken flow line of the type $(3;(1,1,1))$ possible (cf. Fig. \ref{bild1}). Likewise for $k=4$ we encounter broken flow lines of type $(4;(1,1,1,1)),\ (3;(1,1,2)),\ (3;(1,2,1))$ and $(3;(2,1,1))$ and so on for $k>4$.

\begin{figure}[hbt]
	\centering
\includegraphics[width=0.60\textwidth]{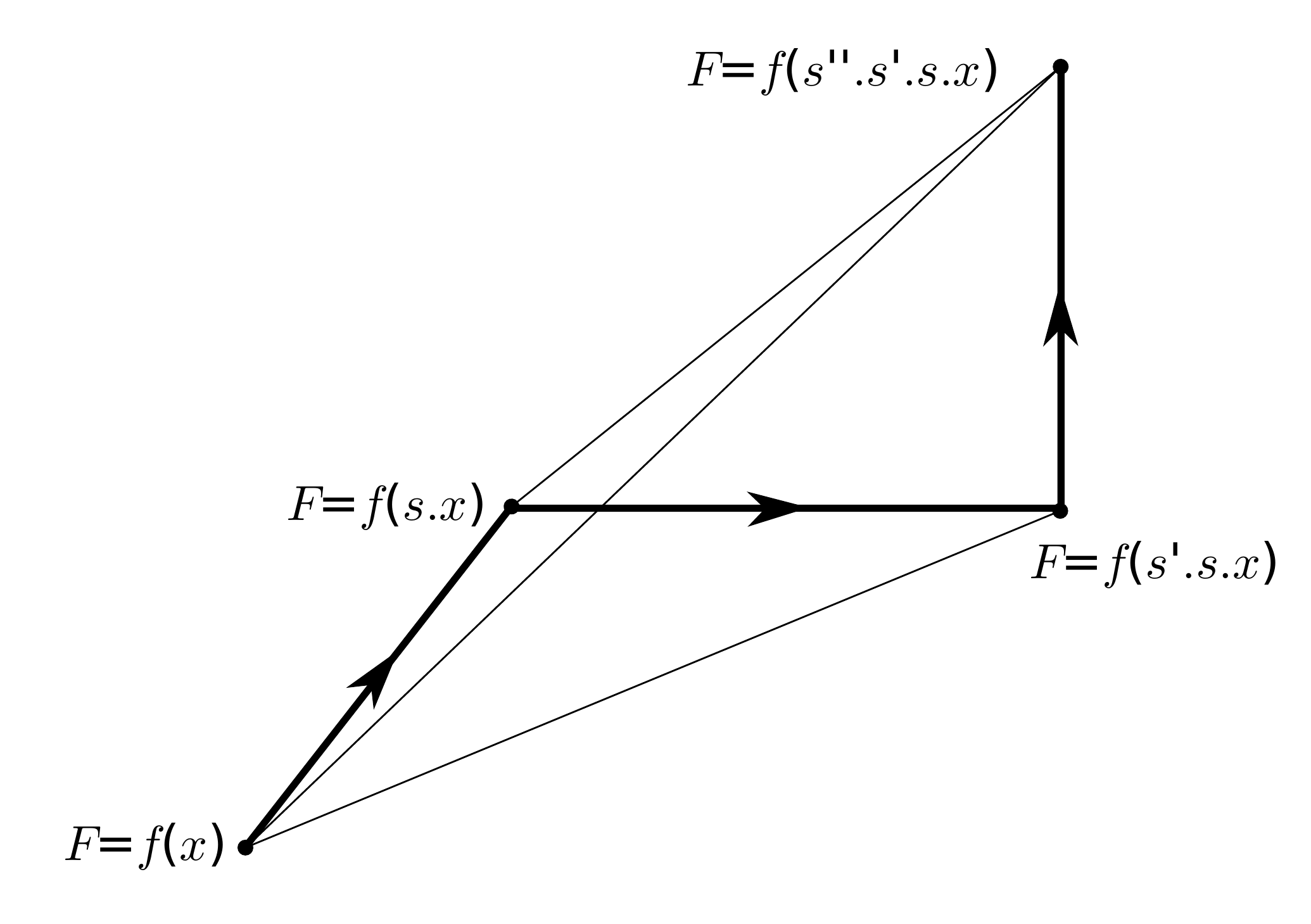}
	\caption{A broken flow line of type $(3;1,1,1)$}
	\label{bild1}
\end{figure}

Fortunately, these broken flow lines do not appear as boundary points of the relevant moduli spaces. To show this we need some combinatorics of simplices:

Recall the construction of the simplex $\Delta^k$ in Subsection \ref{kjfl} and let the vertices of $\Delta^k$ be numbered in the following way: Start with $0=(0, \ldots,0) \in \mathbb{R}^k$ and let $i$ be the vertex which is added to $\Delta^{i-1}$ to obtain $\Delta^i$.

Now given a broken flow line of type $(m,\Gamma)$ from $A^0_x$ to $A^k_y$ we identify it with a sequence of $m+1$ natural numbers starting with $0$, ending with $k$. Every number in this sequence corresponds to the vertex at the critical submanifold the broken flow line passes. Therefore the sequence is strictly increasing. For $m \in \{1,\ldots,k\}$ let $(i_0\negthickspace=\negthickspace0,\ i_1,\ \ldots\ ,i_{m-1},\ i_m\negthickspace=\negthickspace k)$ denote this sequence. 

Then $\Gamma_j=i_{j}-i_{j-1}$ and we have 
\begin{align*}
 \sum_{j=1}^m\Gamma_j&= i_1 - i_0 + i_{2}-i_{1} + \ldots + i_m-i_{m-1} \\
& = i_1 + i_2 - i_1 + i_3 - i_2 + \ldots + i_{m-1} - i_{m-2} + k - i_{m-1}\\
& = k.  
\end{align*}

More generally, for $m=m_b + m_0$:
\begin{equation} \label{comb}
\sum_{j=1}^m\Gamma_j=k.
\end{equation}

Now we are able to prove the final claim: Observe that Corollary \ref{dims} implies
\begin{equation} \label{cc}
\widetilde{\M}_l(z,z')\neq \emptyset \Longleftrightarrow \mu(z')\geq \mu(z)+1-2l.
\end{equation} 
The structure of the broken flow line \eqref{bfl} and the inequality \eqref{cc} imply
\begin{gather*}
 \mu(x)+n_0  = \mu(z_{l_1}), \ \mu(z_{l_1}) \leq \mu(z_{l_1}') + 2l_1-1; \\
\mu(z_{l_1}')+n_1 = \mu(z_{l_2}),\ \mu(z_{l_2}) \leq \mu(z_{l_2}') + 2l_2-1; \\
\ldots \\
\mu(z_{l_{m_b}}) \leq \mu(z_{l_{m_b}'}) + 2l_{m_b}-1 \text{ and} \\
\mu(z_{l_{m_b}'}) + n_{m_b} =  \mu(y).
\end{gather*}
Combining these equations,
\begin{gather*}
\mu(x)+n_0 \leq  \mu(z_{l_1}') + 2l_1-1=  \mu(z_{l_2}) + 2l_1-1 - n_1 \\
\ldots \\
\leq  \mu(z_{l_{m_b}}') + 2l_1-1 + \ldots +2l_{m_b}-1 - n_1 - \ldots - n_{m_b-1}\\
=  \mu(y) + 2l_1-1 + \ldots 2l_{m_b}-1 - n_1 - \ldots - n_{m_b}.
\end{gather*}
Thus, \begin{equation*} \mu(x) \leq  \mu(y) + 2\sum_{i=1}^{m_b}l_i -m_b - \sum_{i=0}^{m_b}n_i.\end{equation*}

Now if $\dim \widetilde{\M}_k(x,y)=0$, that is $\mu(y)=\mu(x)+1-2k$, then the last inequality reads
\begin{align*}
0 \leq\ & 1-2k+2\sum_{i=1}^{m_b}l_i - m_b - \sum_{i=0}^{m_b}n_i \\
\overset{\eqref{comb}}{\Longleftrightarrow} 0 \leq\ & 1-2k + 2k -m_b - \sum_{i=0}^{m_b}n_i\\
\Longleftrightarrow 1 \geq\ & m_b +\sum_{i=0}^{m_b}n_i.
\end{align*}

If $m_b=0$, then $n_0$ is either $0$ or $1$. But $n_0=0$ means there is no flow line at all and $n_0=1$ leads to a contradiction, because we have shown that if $O(w) \in \M_0(x,z')$, i.e. $z'\neq y$, then $m_b$ must be greater zero.
Therefore we conclude that $m_b=1, n_0=n_1=0$, hence $\widetilde{\M}_k(x,y)$ is already compact.

For $\dim \widetilde{\M}_k(x,y)=1$ we have
\begin{align*}
\mu(y)= & \mu(x)+2-2k, \\
\Longrightarrow 0 \leq & 2 - m_b -\sum_{i=0}^{m_b}n_i.
\end{align*}

Again, $m_b=0$ makes no sense. If $m_b=1$, then either $n_0=n_1=0$ or $n_0=0,n_1=1$ and vice versa. In the first case $O(w) \in \M_k(x,y)$ and $\widetilde{\M}_k(x,y)$ is already compact. Moreover, \eqref{comb} shows that the latter case and $m_b=2$ $(n_0=n_1=n_2=0)$ are precisely the $m=2$ broken flow lines stated in the theorem. This completes the proof.  

\end{proof}

\newpage

\subsection{Gluing}
\label{gluing}

We now come to the complementary concept of convergence to broken flow lines: The gluing map. Given $x,y \in \C(f)$ with $\mu(y)-\mu(x)=2-2k$ and a broken flow line of type $(2,\Gamma)$ from $x$ to $y$, the gluing map produces a flow line in the one-dimensional moduli space $\widetilde{\M}_k(x,y)$. Observe that the broken flow lines used to compactify $\widetilde{\M}_k(x,y)$ are ``connected'', that is $u_i^+=u_{i+1}^-$. The gluing map will be defined only for such special broken flow lines - this reduces the gluing to more or less the Morse case and allows us to prove the following theorem with the methods from Weber \cite{We}, i.e. local constructions around the ``connecting point'' of two flow lines: 

\begin{theorem} 
Given critical points $x,y,z_j \in \C(f)$ with $\mu(y)-\mu(x)=2-2k$ and $\mu(z_j)=\mu(x)+1-2j$ and $j\in J=\{0,1,\ldots,k\}$, there exists a real number $\rho_0 > 0$ and for every $j\in J$ an embedding
\begin{gather*}
\Diamond^j: K \times [\rho_0,\infty) \to \widetilde{\M}_k(x,y), \\ (u,v,\rho) \mapsto u\Diamond^j_{\rho}v.
\end{gather*}

Here $K\subset \widetilde{\M}_j(x,z_j) \times \widetilde{\M}_l(z_j,y)$ is the subset of connected flow lines and $l:=k-j$.
\\

The map $\Diamond^j$ satisfies
\begin{equation*}
u\Diamond^j_{\rho}v \longrightarrow (u,v)\ \text{ as }\ \rho \longrightarrow \infty,
\end{equation*} 
and no other sequence in $\widetilde{\M}_k(x,y)\setminus (u\Diamond^j_{[\rho_0,\infty)}v)$ converges to $(u,v)$.

\end{theorem}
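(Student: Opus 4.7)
The plan is to adapt Weber's local dynamical-systems gluing construction \cite{We} from the Morse case to our Morse--Bott setting on $W_k$, working in normal coordinates around the connecting critical submanifold $A^j_{z_j}$. Since $\widetilde{\M}_j(x,z_j)$ and $\widetilde{\M}_l(z_j,y)$ are both moduli spaces of flow lines of the same gradient-like vector field $V$, the problem reduces essentially to the Morse case once we quotient out the tangent directions to $A^j_{z_j}$.

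First, fix $(u,v)\in K$ and set $p:=u^+=v^-\in A^j_{z_j}$. By the Morse--Bott Lemma \ref{mbl} together with the local form of the gradient-like field $V$ near critical submanifolds, choose a coordinate chart $U=U^0\times U^u\times U^s$ around $p$ in $W_k$ in which $A^j_{z_j}\cap U=U^0\times\{0\}\times\{0\}$, the local stable/unstable manifolds $W^{u,s}_{\text{loc}}(A^j_{z_j})$ are the obvious coordinate slices, and $V$ restricts to the hyperbolic linearization $-2w^u\partial_{w^u}+2w^s\partial_{w^s}$ in the normal directions while vanishing along the $w^0$-direction. Shift $u$ and $v$ so that $u(-\rho)$ lies in $W^s_{\text{loc}}(A^j_{z_j})$ and $v(\rho)$ lies in $W^u_{\text{loc}}(A^j_{z_j})$.

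The heart of the construction is then an application of the $\lambda$-lemma (inclination lemma) in the hyperbolic normal directions: as $\rho\to\infty$, the forward flow image of a small disk in $W^u(A^0_x)$ through $u(-\rho)$ converges $C^1$ to $W^u(A^j_{z_j})\cap U$, and the backward flow image of a disk in $W^s(A^k_y)$ through $v(\rho)$ converges to $W^s(A^j_{z_j})\cap U$. The transversality result of Section \ref{tv} combined with the transverse intersection $W^u(A^j_{z_j})\ti W^s(A^j_{z_j})=A^j_{z_j}$ in the normal factor produces, for every $\rho\geq\rho_0$, a unique nearby transverse intersection point of the two perturbed disks. The integral curve through this intersection is the sought glued flow line $u\Diamond^j_\rho v\in\widetilde{\M}_k(x,y)$, and smooth dependence on $(u,v,\rho)$ is inherited from the $C^\infty$-dependence in the $\lambda$-lemma. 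The convergence $u\Diamond^j_\rho v\to(u,v)$ as $\rho\to\infty$ is then immediate, and the ``no other sequence'' statement follows by running the compactness proof of Section \ref{comp} backwards: any sequence in $\widetilde{\M}_k(x,y)$ converging to $(u,v)$ must eventually enter $U$ and cross a fixed regular level there, and the crossing time uniquely recovers the gluing parameter $\rho$.

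The principal obstacle is not the hyperbolic local gluing, which is Morse-theoretic once the normal form is in place, but two bookkeeping issues specific to the Morse--Bott framework: (i) the connectedness assumption $u^+=v^-$ must be used crucially to pin down the $T^k$-component of the connection point, since without it one would recover a $k$-dimensional family of gluings parametrized by $A^j_{z_j}$ rather than a one-parameter family; and (ii) one has to confirm that the pre-glued path lies in the flow-invariant stratum of $W_k$ containing genuine $k$-jump flow lines, so that $u\Diamond^j_\rho v$ indeed lands in $\widetilde{\M}_k(x,y)$ and not in some $\widetilde{\M}_{k'}(x,y)$ with $k'<k$. Both points are controlled by the local form of $V$ near $A^j_{z_j}$ together with the ordering of the vertices of $\Delta^k$ established in Section \ref{kjfl}.
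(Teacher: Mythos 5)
Your proposal follows essentially the same route as the paper: Weber's local dynamical-systems gluing adapted to the Morse--Bott setting, with a flattened normal form around $A^j_{z_j}$ from the Morse--Bott Lemma, the $\lambda$-lemma giving $C^1$-closeness of the forward/backward flowed disks in $W^u(A^0_x)$ and $W^s(A^k_y)$ to the local unstable/stable slices, a unique nearby intersection point defining $u\Diamond^j_{\rho}v$, and its uniqueness yielding the ``no other sequence'' statement. The only places the paper is more explicit are the uniqueness of the intersection point, which it proves by writing both disks as graphs and applying the Banach fixed point theorem to a contraction $c_t$ on $B^u$ (with the $A$-coordinate pinned at the connecting point, which is exactly your point (i)), and the verification that $\rho\mapsto p_\rho$ is an embedding via transversality of $V$ to the disks and $\dim\widetilde{\M}_k(x,y)=1$.
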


\begin{proof}

Fix $j$, and for $u\in \widetilde{\M}_j(x,z_j), v\in \widetilde{\M}_l(z_j,y)$ let $a:=O(v)^-=O(u)^+$ denote the starting point of $v$ in $A^j_{z_j}$. Here $A^j_{z_j}$ is the critical submanifold associated to $z_j$, containing $a$ and lying in $M\times T^k \times \Delta^k$ at the $j$-th vertex. We will work locally around $a$, so let $a=0 \in \mathbb{R}^{n+2k}$ and $A\subset A^j_{z_j}$ be a small neighbourhood of $a$. Furthermore, since $A^j_{z_j}$ is a submanifold of $W$ we can assume without loss of generality that $A=\{p\in \mathbb{R}^{n+2k} | p_{k+1}=\ldots=p_{n+2k}=0\}$. The proof has three steps:
\\

\textbf{1. Local model}
\\

We can locally ``straighten out'' the stable and unstable manifolds of $A$:
\\

Let $E^i=T_aW^i(A)$; the Morse-Bott Lemma implies that we can describe a small tubular neighbourhood $N$ of $A$ by coordinates $(p^0,p^u,p^s)$ with $p^0\in A$, $p^i\in B^i\subset E^i$, $B^i$ small neighbourhoods of zero in $E^i$. Moreover, locally the stable and unstable manifolds of $A$ are graphs - this is a consequence of the ``Stable Manifold Theorem'' (cf. Cresson and Wiggins \cite{CW}). This means, there are smooth maps $\eta_i: A \times B^i \to E^{i*}$ ($i*=u$ if $i=s$ and vice versa) with $\eta_i(p^0,0)=0,\ D\eta_i(p^0,0)=0$ and
\begin{gather*}
W^u(A)\cap N = \{(p^0,p^u) | p^s=\eta_u(p^0,p^u))\},\\
W^s(A)\cap N = \{(p^0,p^s) | p^u=\eta_s(p^0,p^s))\}.
\end{gather*}
The graphs of $\eta_i$ are called \textit{local (un-)stable manifolds} $W^i_{\text{loc}}(A)$. 

Define 
\begin{gather*}
\eta : N \to N, \\ 
\eta(p_0,p_u,p_s):= (p_0,p_u - \eta_s(p_0,p_s),p_s - \eta_u(p_0,p_u)).
\end{gather*} 
The map $\eta$ satisfies $\eta(0)=0$ and $D\eta(0)=id$, hence it is a diffeomorphism on some neighbourhood of zero, ``flattening out'' the local (un-)stable manifolds.

Now define a flow $\Psi_t := \eta \circ \Phi_t \circ \eta^{-1}$. It satisfies $\Psi_t(0)=0,\ D\Psi_t(0)=D\Phi_t(0)$ and a small neighbourhood of zero in $W^i(A,\frac{d\Psi_t}{dt}\restriction_{t=0})$ is a small neighbourhood of zero in $E^i$.
This is our locally flat model.

\begin{figure}[hbt]
	\centering
\includegraphics[width=1.00\textwidth]{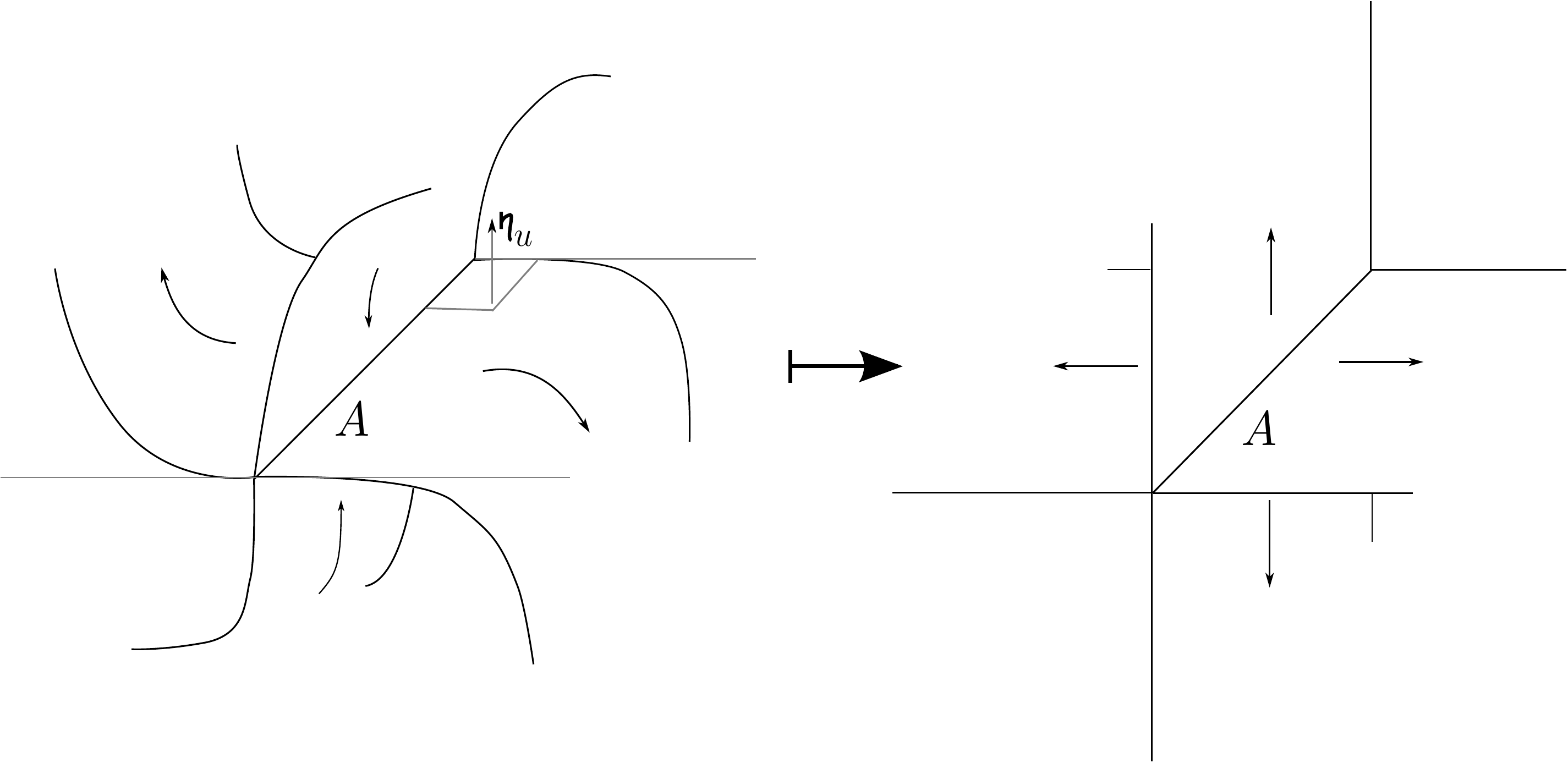}
	\caption{Flattenend out (un-)stable manifolds}
	\label{bild4}
\end{figure}

\textbf{2. Unique intersection point}
\\

Fix closed balls $B^i \subset W^i_{\text{loc}}(A) \subset E^i$ around zero and set $U:=A \times B^u \times B^s $.
For $u\in \widetilde{\M}_j(x,z_j)$ assume that $u \in B^s$ (if not let it flow into $B^s$ with $\Phi_t$).
Choose a $(\mu(z_j)+n+k-j)$-disk $D^{\mu(z_j)+n+k-j}\subset W^u(A^0_x)$ transversally intersecting $A\times B^s$ at $u$. For $t > 0$ denote by $D_t$ the connected component of $\Phi_t(D^{\mu(z_j)+n+k-j})\cap U$ containing $\Phi_t(u)$.

For $v \in \widetilde{\M}_l(z_j,y)$ do the same with a $(\mu(z_j)+j)$-disk in $W^s(A^k_y)$ with respect to the backward flow to obtain $D_{-t}$, the connected component of $\Phi_{-t}(D^{\mu(z_j)+j})\cap U$ containing $\Phi_{-t}(v)$ ($t>0$).
\\
 
Then there exists $t_0>0$, such that for all $t > t_0$ there is a unique intersection point 
\begin{equation*}
p_t:= D_t \cap D_{-t} \cap \{0\}\times B^u\times B^s.
\end{equation*}
\begin{figure}[hbt]
	\centering
\includegraphics[width=0.80\textwidth]{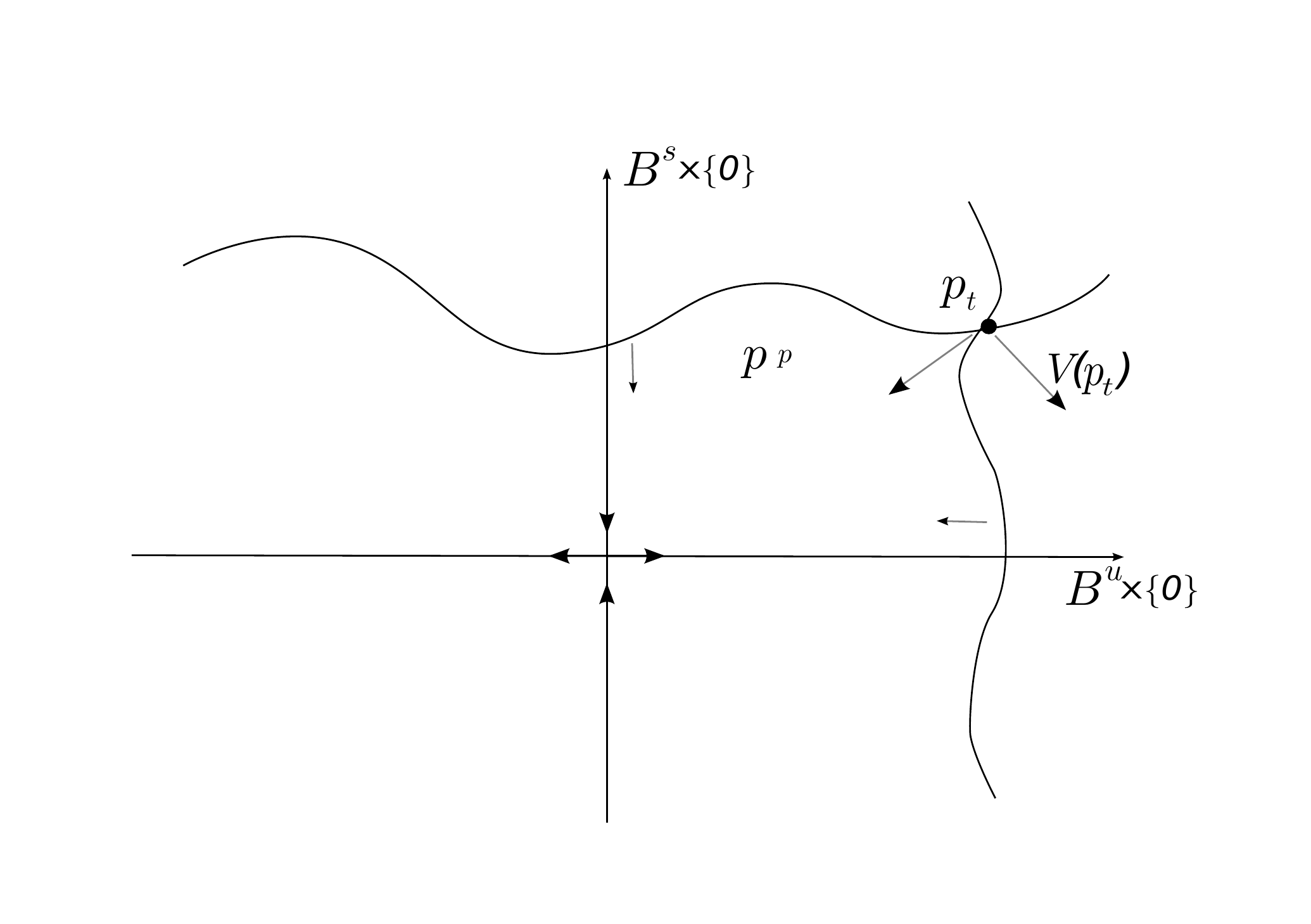}
	\caption{Unique intersection point}
	\label{bild5}
\end{figure}

To prove that $p_t$ is a unique point we want to express both disks as graphs of smooth maps $a_t:A\times B^u \to B^s$ and $b_t:A\times B^s \to B^u$, for then the intersection of both disks in $B^u\times B^s$ equals the fixed point set of the map $c_t: B^u \to B^u$, $q\mapsto b_t(0,a_t(0,q))$. So if $c_t$ is a strict contraction, we are done.
To express both disks as graphs we use the $\lambda$-Lemma (see Cresson and Wiggins \cite{CW}), which in our case can be stated in the following way:

Since $D^{\mu(z_j)+n+k-j}$ intersects $A\times B^s$ transversally, there exists for every $\epsilon > 0$ a $t_0 > 0$, such that $D_t$ is $C^1\ \epsilon$-close to $A\times B^u$ for all $t \geq t_0$. A simliar statement holds for $D_{-t}$.
This is equivalent to the existence of diffeomorphisms
\begin{gather*}
\alpha_t: A\times B^u \to D_t,\quad p=(p^0,p^u) \mapsto (\alpha_t^0(p),\alpha_t^u(p),\alpha_t^s(p)), \\
\beta_t: A\times B^s \to D_{-t},\quad p'=(p'^0,p'^s) \mapsto (\beta_t^0(p'),\beta_t^u(p'),\beta_t^s(p')),
\end{gather*}
satisfying for all $p \in A\times B^u$ and $p' \in A \times B^s $
\begin{gather*}
\|(p,0)-(\alpha_t^0(p),\alpha_t^u(p),\alpha_t^s(p))\| < \epsilon,\ ||(id,0)-(D\alpha_t^0(p),D\alpha_t^u(p),D\alpha_t^s(p)) || < \epsilon, \\
\|(p',0)-(\beta_t^0(p'),\beta_t^s(p'),\beta_t^u(p'))\| < \epsilon, ||(id,0)-(D\beta_t^0(p'),D\beta_t^s(p'),D\beta_t^u(p')) || < \epsilon.
\end{gather*}

Hence for $\epsilon$ small, the maps $p\mapsto (\alpha_t^0(p),\alpha_t^u(p))$ and $p' \mapsto (\beta_t^0(p'),\beta_t^s(p'))$ are invertible and the graph maps are given by 
\begin{equation*}
a_t:=\alpha_t^s \circ (\alpha_t^0,\alpha_t^u)^{-1},\quad b_t:=\beta_t^u \circ (\beta_t^0,\beta_t^s)^{-1}.
\end{equation*}
Now $c_t$ is a strict contraction, because
\begin{align*}
|| Dc_t(q) ||&= || D\big( b_t\circ(0\times id)\circ a_t \circ (0\times id) \big)(q) || \\
&= || Db_t\big(0,a_t(0,q)\big) \cdot Da_t(0,q)  || \\
& =|| D(\beta_t^u \circ (\beta_t^0,\beta_t^s)^{-1})\big(0,a_t(0,q)\big) \cdot D(\alpha_t^s \circ (\alpha_t^0,\alpha_t^u)^{-1})(0,q) || \\
&= ||D\beta_t^u\big(\big(\beta_t^0,\beta_t^s\big)^{-1}\big(0,a_t(q)\big)\big) \cdot D(\beta_t^0,\beta_t^s)^{-1}\big(0,a_t(q)\big)\\
& \quad  \ \cdot D\alpha_t^s\big((\alpha_t^0,\alpha_t^u)^{-1}(0,q)\big) \cdot D(\alpha_t^0,\alpha_t^u)^{-1}(0,q)|| \\
&\leq ||D\beta_t^u\big(\big(\beta_t^0,\beta_t^s\big)^{-1}\big(0,a_t(q)\big)\big)||\cdot ||D(\beta_t^0,\beta_t^s)^{-1}\big(0,a_t(q)\big)|| \\
&\quad \cdot ||D\alpha_t^s\big((\alpha_t^0,\alpha_t^u)^{-1}(0,q)\big)|| \cdot||D(\alpha_t^0,\alpha_t^u)^{-1}(0,q)|| \\
&\leq \epsilon^2 \cdot||D(\beta_t^0,\beta_t^s)^{-1}\big(0,a_t(q)\big)|| \cdot ||D(\alpha_t^0,\alpha_t^u)^{-1}(0,q)||\ \text{ ($\epsilon$-closeness)} \\
&\leq \epsilon^2\frac{1}{(1-\epsilon)^2} \ <1
\end{align*} 
holds for all $\epsilon < \frac{1}{2}$. The last inequality is due to a corollary of the properties of a Neumann series: 
\begin{equation*}
||id - T|| < 1 \Longrightarrow T \text{ is invertible and } ||T^{-1}|| < \frac{1}{1-q}, \text{ where } q=||id-T||.
\end{equation*}

The Banach Fixed Point Theorem asserts that $ c_t$ possesses a unique fixed point $p_t$ with $\|p_t\| < \sqrt{2}\epsilon$ for all $t > t_0$.
\\

\textbf{3. The gluing map}
\\

Set $\rho_0:= t_0$ and
\begin{equation*}
u\Diamond^j_{\rho}v:=p_{\rho}\quad \forall \rho \in [\rho_0,\infty). 
\end{equation*}

Clearly $p_{\rho} \in \widetilde{\M}_k(x,y)$ for all $\rho > \rho_0$. It remains to show that $\Diamond^j$ is an embedding and the convergence property asserted in the theorem.

The vector field $V$ is transverse to $D_t$ and $D_{-t}$ (otherwise choose $D^{\mu(z_j)+n+k-j}$ and $D^{\mu(z_j)+j}$ smaller in the previous steps). This implies $\frac{dp_t}{dt} \neq 0$, because $D_t$ and $D_{-t}$ travel in different (time-)direction, hence are displaced from themselves by the flow. In other words, the map $ t \mapsto p_t$ is an immersion onto $\M_k(x,y)$. 

Moreover, because of the different travelling directions, $\frac{dp_t}{dt}$ and $V(p_t)$ cannot be linearly dependent (cf. Fig. \ref{bild5}) - so $p_t$ varies with $t$ along $\widetilde{\M}_k(x,y)$ and $\dim{\widetilde{\M}_k(x,y)}=1$ ensures that there is no possibility of self-intersections or self-returns preventing $ t \mapsto p_t$ from being a homeomorphism onto its image. Therefore 
\begin{gather*}
\Diamond^j: \widetilde{\M}_j(x,z_j) \times [\rho_0,\infty) \times \widetilde{\M}_l(z_j,y) \to \widetilde{\M}_k(x,y), \\
(u,v,\rho) \mapsto u\Diamond^j_{\rho}v
\end{gather*}
is an embedding.

To prove the convergence statement choose a sequence 
\begin{equation*}
\epsilon_l \longrightarrow 0,\ \epsilon_l > 0\quad  \forall l \in \mathbb{N}.
\end{equation*} 
The $\lambda$-Lemma produces a sequence $(t_{0,l})_l$, such that $D_t$ is $C^1 \ \epsilon_l $-close to $A\times B^u$ for $t > t_{0,l}$ and the same holds for $D_{-t}$ and $A\times B^s$.

For any sequence of sufficiently large real numbers $t_i \longrightarrow \infty$ choose a subsequence $(t_{0,l_i})_i$ of $(t_{0,l})_l$, such that $t_i \geq t_{0,l_i}$. Then 
\begin{gather*}
D_{t_i} \text{ is }  C^1\ \epsilon_{l_i} \text{-close to } A\times B^u, \\
\Longrightarrow \|p_{t_i}\| < \sqrt{2}\epsilon_{l_i} \text{, converging to zero as } i \longrightarrow \infty. \\
\Longrightarrow \|p_t\| \longrightarrow 0 \text{ as } t \longrightarrow \infty, 
\end{gather*}
and similiar for $D_{-t}$.
Using the same arguments as in the proof of the Compactness Theorem we conclude that
\begin{equation*}
u\Diamond^j_{\rho}v \longrightarrow (u,v) \text{ as } \rho \longrightarrow \infty 
\end{equation*}
and no other sequence converges to $(u,v)$ since the intersection point $p_t$ was shown to be unique.

Finally, repeat this construction to obtain $\Diamond^j$ for all $j \in J$ and set 
\begin{equation*}
\rho_0:=\max_{j\in J}(\rho_0(j)).
\end{equation*}

\end{proof}

The essence of the last two sections is the following corollary:

\begin{cor} \label{bound}
For $\dim \widetilde{\M}_k(x,y)=1$ the connected components of the compactified moduli spaces are diffeomorphic either to $S^1$ or $[0,1]$ and in the second case every boundary component corresponds to precisely one broken orbit $(u,v)$ of type $(2,\Gamma)$ with $\Gamma=(i,j) \ (i+j=k)$.
\end{cor}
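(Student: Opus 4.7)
The plan is to assemble the corollary from the three main pillars that have just been proved: transversality, compactness, and gluing, combined with the classification of compact one-dimensional manifolds.

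First I would note that by the Transversality Theorem (Corollary \ref{dims}), under our standing hypothesis that $\mu(y)-\mu(x)=2-2k$ we have $\dim \widetilde{\M}_k(x,y)=(\mu(y)-\mu(x))+2k-1=1$, so each connected component is a smooth one-dimensional manifold without boundary. The Compactness Theorem identifies the ``missing points'' of its compactification: any sequence in $\widetilde{\M}_k(x,y)$ has a subsequence converging either to an element of $\widetilde{\M}_k(x,y)$ itself, or to a broken flow line of type $(2,\Gamma)$ with $\Gamma=(i,j)$, $i+j=k$. In particular the compactification $\overline{\widetilde{\M}}_k(x,y)$ is obtained by adjoining precisely the set $\mathcal{B}$ of such connected broken orbits.

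Next I would invoke the Gluing Theorem to turn each point of $\mathcal{B}$ into a topological boundary point of a one-dimensional manifold with boundary. For a broken orbit $(u,v)$ with $u\in \widetilde{\M}_j(x,z_j)$, $v\in \widetilde{\M}_l(z_j,y)$, $j+l=k$, the map $\rho \mapsto u\Diamond^j_{\rho}v$ is an embedding of $[\rho_0,\infty)$ into $\widetilde{\M}_k(x,y)$ whose image, together with the limit point $(u,v)$, gives a half-open neighbourhood of $(u,v)$ in $\overline{\widetilde{\M}}_k(x,y)$ homeomorphic to $[0,1)$. Crucially, the last clause of the Gluing Theorem guarantees that no other sequence converges to $(u,v)$, so this collar is the \emph{only} way to approach $(u,v)$; hence $\overline{\widetilde{\M}}_k(x,y)$ is a topological one-manifold with boundary $\mathcal{B}$, and in fact smooth on the interior with a well-defined collar at each boundary point.

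Finally, I would apply the classification of compact one-dimensional manifolds with boundary: every connected component of $\overline{\widetilde{\M}}_k(x,y)$ is diffeomorphic either to $S^1$ or to $[0,1]$. Components of the first type contain no broken orbits, while components of the second type have exactly two boundary points, each corresponding by the previous paragraph to a unique broken orbit of type $(2,\Gamma)$ with $\Gamma=(i,j)$ and $i+j=k$. The only subtle point — and in some sense the main obstacle — is ensuring that the gluing collars at distinct broken orbits are disjoint and that approaching \emph{any} boundary point forces convergence to an honest broken flow line of the stated type; but this is exactly the content of the uniqueness clauses in the Compactness and Gluing Theorems, so no new work is needed.
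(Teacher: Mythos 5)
Your proposal is correct and follows essentially the same route as the paper: dimension count via transversality, the classification of one-dimensional manifolds, the Compactness Theorem to identify the added points as connected broken orbits of type $(2,\Gamma)$, and the uniqueness clause of the Gluing Theorem to match each boundary point with exactly one such broken orbit. The only difference is that you spell out the collar structure at the boundary points more explicitly than the paper does, which is a welcome clarification rather than a deviation.
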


\begin{proof}
Assume $\widetilde{\M}_k(x,y)$ is connected; since it is a one-dimensional manifold without boundary it must be diffeomorphic to $S^1$ or $(0,1)$. By the Compactness Theorem it is compactifiable using broken flow lines and the last statement in the Gluing Theorem shows that there is exactly one broken flow line corresponding to a boundary point of $[0,1]$. 
\end{proof}

\newpage

\section{$S^1$-Equivariant Morse Cohomology}
\label{emcoh} 

So far we have completed and justified the definition of the equivariant Morse chain groups and the differential $d_{S^1}$ associated to a Morse-Smale pair $(f,g)$ on a closed manifold $M$. In this chapter we show $d_{S^1}^2=0$ and that the homology of this complex equals the equivariant cohomology of $M$. 

\subsection{The $S^1$-equivariant Morse complex}
\label{temc}

\begin{theorem} \label{ds12}

Recall the definition of $d_{S^1}$ in $\eqref{deq}$. For a general equivariant Morse chain $\alpha(T)=\sum_i a_i \cdot T^i$ with $a_i \in CM^*$ we have
\begin{equation*}
d_{S^1}^2 \alpha(T) =0.
\end{equation*}

\end{theorem}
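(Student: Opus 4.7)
The plan is to reduce to a generator $x \in \C(f)$ (viewed as $x \otimes 1$), since $d_{S^1}$ is $\mathbb{Z}_2[T]$-linear: by construction neither $d$ nor the $R_{2k-1}$ touch the $T$-factor, so $d_{S^1}(a \otimes T^j) = (d_{S^1}a) \cdot T^j$ and it suffices to show $d_{S^1}^2 x = 0$.

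Expanding the square of $d_{S^1} = d \otimes 1 + \sum_{k\geq 1} R_{2k-1} \otimes T^k$ and collecting powers of $T$ yields
\begin{equation*}
d_{S^1}^2 x \;=\; d^2 x \;+\; \sum_{k\geq 1}\Bigl(dR_{2k-1} + R_{2k-1}d + \!\!\sum_{\substack{i+j=k\\ i,j\geq 1}}\!\! R_{2i-1}R_{2j-1}\Bigr)\!(x)\,\otimes T^k.
\end{equation*}
The $T^0$-term vanishes since $d^2=0$ is the classical Morse identity. So I would fix $k\geq 1$ and a target $y \in \C(f)$ with $\mu(y)=\mu(x)-2k+2$ (the only Morse index at which the coefficient of $T^k$ can be nonzero) and aim to show that the coefficient of $y$ in the parenthesized expression is even.

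The key step is to identify this coefficient as the mod-$2$ count of the boundary of the compactified $1$-dimensional moduli space $\widetilde{\M}_k(x,y)$, which has dimension $\mu(y)-\mu(x)+2k-1=1$ by Corollary~\ref{dims}. By the Compactness Theorem, the topological boundary consists of broken flow lines of type $(2,\Gamma)$ with $\Gamma=(i,j)$, $i+j=k$, $i,j\in\{0,\dots,k\}$. For each such $(i,j)$, the broken flow line passes through some intermediate $z \in \C(f)$, and the transversality/dimension count forces both pieces to be rigid: $u \in \widetilde{\M}_i(x,z)$ and $v \in \widetilde{\M}_j(z,y)$ are both $0$-dimensional, giving $\mu(z) = \mu(x) - 2i + 1$. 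Summing contributions:
\begin{itemize}
\item $(i,j)=(0,k)$ contributes $\sum_z n(x,z)\, n_k(z,y) = $ coefficient of $y$ in $R_{2k-1}d(x)$;
\item $(i,j)=(k,0)$ contributes $\sum_z n_k(x,z)\, n(z,y) = $ coefficient of $y$ in $dR_{2k-1}(x)$;
\item $1 \leq i,j$ contributes $\sum_z n_i(x,z)\, n_j(z,y) = $ coefficient of $y$ in $R_{2j-1}R_{2i-1}(x)$.
\end{itemize}
By the Gluing Theorem, each broken flow line $(u,v)$ of this type is the unique limit of a gluing family $u\Diamond^i_\rho v$, so it appears as exactly one endpoint of a component of $\overline{\widetilde{\M}_k(x,y)}$. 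By Corollary~\ref{bound}, each connected component is diffeomorphic to $S^1$ (no boundary) or to $[0,1]$ (two boundary points); hence the total number of boundary points is even, and the sum above vanishes mod $2$.

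The main obstacle, and the only nontrivial input, is verifying that the broken flow lines enumerated in Case analysis above exhaust \emph{all} boundary contributions of $\widetilde{\M}_k(x,y)$ and that each appears with multiplicity one---this is precisely what the Compactness and Gluing Theorems of Section~4 provide. The combinatorial bookkeeping (that only splittings $i+j=k$ occur, and that ``redundant'' higher-order breakings such as type $(3;1,1,1)$ do not arise on the boundary of a $1$-dimensional stratum) was already handled in the proof of the Compactness Theorem via the index inequality $\mu(x)\leq \mu(y)+2\sum l_i - m_b - \sum n_i$, so no further work is needed here.
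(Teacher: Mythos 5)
Your proposal is correct and follows essentially the same route as the paper's proof: reduce by $\mathbb{Z}_2[T]$-linearity to a generator, collect powers of $T$ into the identity $dR_{2k-1}+R_{2k-1}d+\sum_{i+j=k}R_{2i-1}R_{2j-1}=0$, and identify each term with a boundary point of the compactified one-dimensional moduli space $\widetilde{\M}_k(x,y)$, which is even in number by the Compactness and Gluing Theorems together with Corollary~\ref{bound}. The only difference is cosmetic: you make the dimension bookkeeping for the intermediate critical point and the exclusion of higher-order breakings slightly more explicit than the paper does.
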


\begin{proof}
It suffices to show this for a generator $x\in \C(f)$ of $CM^*$ - the general case then follows from linearity and $T$-independence of $d_{S^1}$. 
\begin{align*}
d_{S^1}(d_{S^1}x)= & d_{S^1}( dx + \sum_k R_{2k-1}x \cdot T^k) \\
= & d(dx) + \sum_k d(R_{2k-1}x)\cdot T^k + \sum_l R_{2l-1}(dx) \cdot T^l \\
&+ \sum_l R_{2l-1}\big(\sum_k R_{2k-1}x\cdot T^k\big)\cdot T^l \\
= &0 + \sum_k d(R_{2k-1}x)\cdot T^k + \sum_l R_{2l-1}(dx)\cdot T^l \\
& + \sum_{i,j} R_{2i-1}(R_{2j-1}x)\cdot T^{i+j} \\
= & \sum_k \big(d(R_{2k-1}x) + R_{2k-1}(dx)\big)\cdot T^k \\
& + \sum_{i,j} R_{2i-1}(R_{2j-1}x)\cdot T^{i+j}.
\end{align*}

This expression vanishes iff for all $k$:
\begin{equation*} 
(d\circ R_{2k-1})(x) + (R_{2k-1}\circ d)(x)+ \sum_{i+j=k} (R_{2i-1}\circ R_{2j-1})(x)=0.
\end{equation*}

Insert definitions \eqref{rk} and $\eqref{md}$ of $R_{2k-1}$ and $d$: 

\begin{align*}
d(R_{2k-1}x) &= d(\sum_z n_k(x,z)z) \\
&= \sum_y \sum_z n_k(x,z) n_0(z,y)y \\
&=  \sum_y \sum_z \big( | \{u \in \widetilde{\M}_k(x,z)\}|\cdot| \{v \in \widetilde{\M}_0(z,y)\} | \bmod{2}\big) y  \\
&= \sum_y \sum_z \big( | \{(u,v) \in \widetilde{\M}_k(x,z) \times \widetilde{\M}_0(z,y)\} | \bmod{2}\big) y, \\
R_{2k-1}(dx) &= R_{2k-1} ( \sum_{z'}n_0(x,z') z' )\\
&= \sum_y \sum_{z'} \big( | \{(u,v) \in \widetilde{\M}_0(x,z') \times \widetilde{\M}_k(z',y)\} | \bmod{2}\big)y, \\
R_{2i-1}(R_{2j-1}x) &= R_{2i-1}(\sum_{z_j} n_j(x,z_j)z_j) \\
&=\sum_y  \sum_{z_j} n_j(x,z_j)n_i(z_j,y) y \\
&=\sum_y \sum_{z_j} \big( | \{(u,v) \in \widetilde{\M}_j(x,z_j) \times \widetilde{\M}_i(z_j,y)\} |\bmod{2}\big)y,
\end{align*}

with $\mu(z)=\mu(x)-2k+1$, $\mu(z')=\mu(x)+1$, $\mu(z_j)=\mu(x)-2j+1$ and $\mu(y)=\mu(x)-2k+2$. 

Fix $y$; by Corollary \ref{bound} every summand corresponds to a boundary component of the compactification of the one-dimensional moduli space $\widetilde{\M}_k(x,y)$. Hence,

\begin{gather*}
(d\circ R_{2k-1})(x) + (R_{2k-1}\circ d)(x)+ \sum_{i+j=k} (R_{2i-1}\circ R_{2j-1})(x)= \\
\sum_y \big( \sum_{(u,v) \in \partial \widetilde{\M}_k(x,y)} 1\bmod{2}  \big)y=0
\end{gather*}
vanishes, because boundary components (i.e. broken flow lines) always come in pairs and therefore their sum is zero modulo $2$.

\end{proof}

\begin{remark} \label{orient}
For $S^1$-equivariant Morse cohomology with $\mathbb{Z}$-coefficients one would need an orientation of the moduli spaces. Then, mimicing the definition of the differential in ordinary Morse homology,
\begin{equation*}
n_i(x,y):=\sum_{u \in \widetilde{\M}_k(x,y)}\epsilon(u)
\end{equation*}
where $\epsilon(u)=\pm 1$ wether or not $u$ is positively oriented with relation to the orientation on $\widetilde{\M}_k(x,y)$. Then, two broken flow lines corresponding to two boundary components of a connected component of $\widetilde{\M}_k(x,y)$ come with alternating signs, such that their sum vanishes. 

The problem here is to assign an orientation to the moduli spaces: In the ordinary Morse case the (un-)stable manifolds are contractible and therefore orientable (inducing an orientation of the corresponding moduli spaces), whereas in the equivariant case they are $k$-tori-families of open discs which possibly includes non-orientable objects such as the Moebius strip or the Klein bottle. 
\end{remark}

\subsection{$H_*(CM_{S^1}^*,d_{S^1})$}
\label{eqmoco}

Since $d_{S^1}^2=0$ we can take homology of the complex $(CM_{S^1}^*,d_{S^1})$:

\begin{definition}
The \textit{$S^1$-equivariant Morse cohomology groups} (with $\mathbb{Z}_2$-coefficients) are defined as

\begin{equation*}
HM_{S^1}^n:= \ker ( d_{S^1}\restriction_{CM_{S^1}^n} )/  \text{ im}(d_{S^1}\restriction_{CM_{S^1}^{n-1}}).
\end{equation*}

\end{definition}

\begin{theorem} \label{emt}
$HM^*_{S^1}(M)=H^*_{S^1}(M)$. 
\end{theorem}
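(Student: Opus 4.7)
The plan is to identify $(CM_{S^1}^*, d_{S^1})$ with a Frauenfelder flow-lines-with-cascades (FLWC) complex for a natural Morse-Bott setup on a finite-dimensional Borel approximation of $M_{S^1}$, then invoke the FLWC theorem of Section \ref{flwc}.

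Fix $n \gg 0$ and pass to $M_{S^1}^n := (M \times S^{2n+1})/S^1$, the $M$-bundle $\pi: M_{S^1}^n \to \mathbb{CP}^n$ from Borel's construction. Since $S^{2n+1} \to ES^1$ is $(2n)$-connected, one has $H^k(M_{S^1}^n;\mathbb{Z}_2) \cong H^k_{S^1}(M;\mathbb{Z}_2)$ in the stable range $k \leq 2n$, so it suffices to compute $H^*(M_{S^1}^n)$ for $n$ arbitrarily large. On $\mathbb{CP}^n$ choose a perfect Morse function $h$ with critical points $p_0, \ldots, p_n$ of indices $\mu_h(p_i) = 2i$. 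On $M_{S^1}^n$ construct a Morse-Bott function $F$ of the form $K h \circ \pi$ (for $K \gg 0$) plus a fiberwise perturbation equal to $f$ near each critical fiber $\C_i := \pi^{-1}(p_i) \cong M$, so that $\C(F) = \bigsqcup_i \C_i$ with Morse-Bott index $2i$ along $\C_i$ and $F|_{\C_i} = f$. Using $f$ itself as the auxiliary Morse function on $\C(F)$, the critical points of the FLWC data are pairs $(x, p_i)$ with $x \in \C(f)$, graded by
\[
\lambda(x, p_i) = 2i + \mu_f(x).
\]
This matches the grading of $x \otimes T^i \in CM_{S^1}^*$, so the FLWC chain groups coincide with $CM_{S^1}^*$ truncated to $i \leq n$.

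The central step is matching the FLWC differential $d^c$ with $d_{S^1}$. The zero-cascade contributions are ordinary $f$-flow lines inside a single $\C_i$, reproducing $d \otimes 1$. For an $m$-cascade flow line from $(x, p_i)$ to $(y, p_{i+k})$, each cascade projects under $\pi$ to a non-constant $h$-flow line on $\mathbb{CP}^n$ connecting some $p_{i_{l-1}}$ to $p_{i_l}$, and the parallel transport of the principal $S^1$-bundle $S^{2n+1} \to \mathbb{CP}^n$ along this projected trajectory acts on the fiber by an element $s_l \in S^1$. The full cascade thus consists of $f$-flow segments on $M$ punctuated by $S^1$-shifts, totalling a $\mathbb{CP}^n$-index jump of $k$ -- exactly the data of a $k$-jump flow line in the sense of Section \ref{kjfl}. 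Summing over all cascade combinatorics from $(x, p_i)$ to $(y, p_{i+k})$ recovers $n_k(x,y) \bmod 2$, and therefore the cascade part of $d^c$ equals $\sum_k R_{2k-1} \otimes T^k$, yielding $d^c = d_{S^1}$.

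Granting this identification, Frauenfelder's theorem gives $HM_{S^1}^* \cong H^*(M_{S^1}^n)$ in the stable range; passing to $n \to \infty$ identifies the right-hand side with $H^*_{S^1}(M)$. The technically delicate step is the bijection of moduli spaces implicit in matching $d^c$ with $d_{S^1}$: while the correspondence (cascades $\leftrightarrow$ jumps, bundle holonomies $\leftrightarrow$ jump parameters $s_l$) is geometrically transparent, promoting it to an equality of $\mathbb{Z}_2$-counts requires matching the FLWC compactifications and gluing data with the compactness and gluing theorems of Section \ref{mods}. Those theorems are engineered precisely for this matching, so the identification should ultimately reduce to combinatorial bookkeeping of cascade sequences inside $\mathbb{CP}^n$.
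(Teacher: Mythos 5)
Your overall strategy is the same as the paper's: replace $M_{S^1}$ by the Borel approximations $X^m=(S^{2m+1}\times M)/S^1$, pull back the standard Morse function on $\mathbb{CP}^m$ to a Morse--Bott function with critical fibers $\cong M$, run Frauenfelder's FLWC complex with $f$ as auxiliary function on the critical set, and observe that the grading $\lambda=2i+\mu_f$ matches that of $CM_{S^1}^*$ truncated at $T^{m}$. Up to this point you are reproducing the paper's argument (modulo a small inconsistency in your setup: if you perturb $Kh\circ\pi$ fiberwise by $f$ near the critical fibers, the critical set is no longer the whole fiber $\C_i$; in the FLWC framework one keeps the Morse--Bott function $\phi_m\circ\pi_m$ and uses $f$ only as the auxiliary Morse function on $\C(\Phi_m)$).

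The genuine gap is the sentence where you declare $d^c=d_{S^1}$ and defer the rest to ``combinatorial bookkeeping.'' That identity is the actual content of the theorem and it is not combinatorial. The operators $R_{2k-1}$ count solutions of the smooth system $\dot u=V_k(u)$ on $M\times T^k\times\Delta^k$ built from a chosen family of homotopies between $f$ and $f\circ\sigma_{s}$ (Definitions \ref{V_1} and those of Section \ref{kjfl}), whereas the FLWC differential on $X^m$ counts a different kind of object: once the critical fibers are identified with $M$ by parallel transport, a flow line with cascades becomes a piecewise-smooth curve made of $\nabla_g f$-flow segments joined by $S^1$-jumps whose parameters are holonomy differences of the bundle $S^{2m+1}\to\mathbb{CP}^m$. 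Equating the two $\mathbb{Z}_2$-counts requires, as in the paper: (i) the holonomy property of the bundle, which the paper verifies by explicit computation only for the Hopf bundle ($m=1$); (ii) an analytic bijection between the ``non-smooth'' solutions and the solutions of the smooth system, proved via a reparametrization (smoothing of the discontinuous vector field) and continuous dependence of ODE solutions on the parameter $\rho$, and valid only for \emph{small} homotopies $F_\rho$; and (iii) a proof that the equivariant complex is independent of the chosen homotopy, via a continuation cochain map $K=P+Q\cdot T$ and a cochain homotopy $\mathcal{K}=\mathcal{P}+\mathcal{Q}\cdot T$ --- this step is indispensable because $d_{S^1}$ was defined with an arbitrary admissible homotopy, while the bijection in (ii) only applies to a special one. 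Your appeal to the compactness and gluing theorems of Section \ref{mods} does not fill this hole: those theorems concern only the smooth moduli spaces $\widetilde{\M}_k(x,y)$ and are used to prove $d_{S^1}^2=0$ and within the continuation argument, not to compare with the FLWC compactification. Note also that the paper carries out steps (i)--(iii) in detail only for $m=1$ and explicitly leaves the case $m>1$ open, so your claim for arbitrary $n$ would require precisely the generalization the paper defers.
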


\begin{proof}
Since homology commutes with direct limits (see Spanier \cite{Sp}) we understand the right hand side of this equation as direct limit of the directed system ($H^*(X^i),p_{ij})$, where $X^m$ is the total space of the fiber bundle
\begin{equation*}
\begin{CD}
 (S^{2m+1}\times M)/S^1 \\
 @VV \pi_m V \\
 \mathbb{CP}^m,
\end{CD}
\end{equation*}
and the maps $p_{ij}: H^*(X^i) \to H^*(X^j)$ are induced by the projections $S^{2j+1}\to S^{2i+1}$. On the other hand, $ M_{S^1} = \varinjlim X^m$ is the direct limit of the directed system $(X^i,\iota_{ij})$, where $\iota_{ij}: X^i \to X^j$ are the inclusions induced by $S^1 \subset S^3 \subset \ldots \subset S^{\infty}$. Therefore,
\begin{gather*}
H^*_{S^1}(M)=H^*(\varinjlim X^m)= \varinjlim H^*(X^m).
\end{gather*}
Let $\mathcal{C}[T]$ denote the equivariant Morse cochains. We need to show that the restricted complex 
\begin{equation*}
(\mathcal{C} [T] / (T^{m+1}),d+\sum_{k=1}^{m}R_{2k-1}\cdot T^k)
\end{equation*}
computes the cohomology of $X^m$. We will do this in the case $m=1$, but first we introduce the idea in general:
\\ 

Let $\phi_m$ be the standard Morse function on $\mathbb{CP}^m$,
\begin{gather*}
\phi_m: \mathbb{CP}^m \to \mathbb{R}, \\
[z_0: \ldots: z_m] \mapsto \sum_{k=1}^m k\|z_k\|^2.
\end{gather*}
$\phi_m$ has critical points $\displaystyle{z^0=[1:0:\ldots],\ \ldots\ , z^m=[0:\ldots:1]}$ with Morse indices $\mu(z^i)=2i$. Therefore the Morse differential vanishes in all degrees and every moduli space $\widetilde{\M}(z^i,z^{i+1})$ is isomorphic to $S^1$ - this is a consequence of the cell structure of $\mathbb{CP}^m$:
\begin{equation*}
\mathbb{CP}^m=e^0 \cup e^2 \cup \ldots \cup e^{2m}. 
\end{equation*}
$\phi_m$ lifts to a Morse-Bott function $\Phi_m:=\phi_m\circ \pi_m$ on $X^m$ with critical set 
\begin{equation*}
\C(\Phi_m)=\bigcup_{i=0}^m \pi_m^{-1}(z^i) := \bigcup_{i=0}^m M_i,\ \mu(M_i)=2i.
\end{equation*} 

We use FLWC to obtain $H^*(X^m)$: Choose a Morse-Smale pair $(f,g)$ on $M$ and a connection on the principal bundle 
\begin{equation*}
\begin{CD}
 S^{2m+1} \\
 @VV p_m V \\
 \mathbb{CP}^m.
\end{CD}
\end{equation*}
and let $A_m$ denote the induced connection on the associated bundle $\pi_m: X^m \to \mathbb{CP}^m$. Now identify $\pi_m^{-1}(z^0)$ with $M$ and use parallel transport along the unique horizontal lift with respect to $A_m$ of a flow line $u$ from $z^0$ to $z^1$ in $\mathbb{CP}^m$ to identify the two fibers over $z^0$ and $z^1$. Continue with a flow line from $z^1$ to $z^2$ and so on until all critical fibers $M_i$ are identified with $M$. This defines a Morse-function on $\C(\Phi_m) \cong \dot{\cup}_m M$, which is just $f$ on every fiber. 
\\

We need two important properties of the bundles $p_m$: 
\\

\textbf{1.} Using (the lift of) another flow line $u'$ from $z^0$ to $z^i$ the difference between the corresponding parallel transports of $p \in M$ is given by the $S^1$-action on $M$: If $q\in M$ is the parallel transport $\mathcal{P}^{A_m}_u(p)$ of $p$ along $u$, then $\mathcal{P}^{A_m}_{u'}(p)=s.q $ for some $s\in S^1$. Observe that the same holds in the associated bundle $\pi_m$, because $A_m$ induces parallel transport in every associated bundle by
\begin{equation*}
{\mathcal{P}_{\text{ass}}^{A_m}}_u( [p,q] ) := [\mathcal{P}^{A_m}_u(p),q].
\end{equation*}

\textbf{2.} Since for $l\leq m-i$ all moduli spaces $\widetilde{\M}(z^i,z^{i+l})$ are isomorphic and flow lines of $\Phi_m$ are lifts of flow lines of $\phi_m$, the following holds: If there is a flow line of $\Phi_m$ from $M_i$ to $M_{i+l}$, then there is also one from $M_j$ to $M_{j+l}$ for all $i,j \in \{0,\ldots, m\},\ i+l$ and $j+l \leq m$. In other words, when considering flow lines with $l$ cascades from $M_i$ to $M_{i+l}$, it suffices to study those from $M_0$ to $M_l$.
\\

Now let $x\in \C(f)$ and recall the grading of Section \ref{flwc}, 
\begin{equation*}
\lambda(x)=\mu_{\Phi_m}(x)+\mu_f(x)=\mu(x)+2i \ \text{ if $x \in M_i$},
\end{equation*} 
which we are able to express in this special case with an independent variable $T$ of degree $2$, such that the complex $(CC^*,d^c)$ takes the following form:
\begin{gather*}
CC^*:= CM^*\otimes \mathbb{Z}_2[T] \cong CM_{S^1}^*, \\
d^c (x\cdot T^i) :=\sum_k d_k x \cdot T^i, \\
d_k x := \sum_{\mu(y)=\mu(x)+1-2k}n_k(x,y)y\cdot T^k,
\end{gather*}
where $n_k(x,y)$ is the algebraic count of flow lines with $k$ cascades from $x\in M_0$ to $y\in M_k$. 

A flow line with $k$ cascades on $X^m$ translates in this special case into a flow line $u$ of $\nabla_g f$ on $M$, such that there is $\underline{T}:=(t_1, \ldots, t_{2k})\in \mathbb{R}^{2k}$ with $-\infty =:t_0 < t_1< \ldots < t_{2k} <  t_{2k+1}:=\infty $ and for even $i$:
\begin{gather}
\dot{u}(t)= \nabla_g f\big(u(t)\big) \text{ for $t\in (t_i,t_{i+1})$ } \text{and } u(t_i)=s_i.u(t_{i-1}) \text{ with $s_i \in S^1.$ } \label{nsflwc}
\end{gather}   

Here the $s_i$ play the role of the cascades due to the first property of $p_m$ mentioned above. 

This is the ``non-smooth'' picture, where a $k$-jump flow line consists of $k+1$ solutions of $\dot{u}=\nabla_g f(u)$ matched together by $S^1$-orbits of their starting and ending points. In other words, there is a Morse-like complex computing $H^*(X^m)$ which is generated by the critical points of $f$ (with a different grading) and a differential counting piecewise smooth curves consisting of flow lines of $\nabla_gf$ and orbits of the $S^1$-action on $M$.

It remains to show that this is equivalent to the ``smooth'' picture we have used in the construction of the $S^1$-equivariant Morse complex in Chapters $3$ and $4$, i.e. using homotopies between $f$ and $f\circ \sigma_s$. N.B. Since the homology of the complex associated to FLWC is independent of the chosen Morse-Smale pair $(f,g)$, the same will be true for the equivariant Morse complex.
\\

From now on let $m=1$ (like in Chapter $4$ we omit the subscript $1$ in $W,F,V$):
\\

First we show property 1 of $p_1$, the Hopf-bundle (with the standard connection), by direct computation: 
Let
\begin{equation*}
h: S^3 \to S^2\cong \mathbb{C}^*, \quad (z_1,z_2) \mapsto \frac{z_1}{z_2},
\end{equation*}  
be the Hopf map and let 
\begin{equation*}
z_1=\exp(i\xi_1) \sin \eta, \ z_2=\exp(i\xi_2)\cos \eta \quad \big(\xi_i \in [0,2\pi),\ \eta \in (0,\frac{\pi}{2}]\big).
\end{equation*}
The fibers over the north- and southpole $N,S$ of $S^2$ are given by 
\begin{equation*}
h^{-1}(\infty)=(\exp(i\xi_1),0),\ h^{-1}(0)=(0,\exp(i\xi_2)).
\end{equation*}
Flow lines $u_{\gamma}$ from $S$ to $N$ (i.e. great circles in $S^2$) are given by $u_{\gamma}(t)=\exp(i\gamma)\cdot t$ with $\gamma \in [0,2\pi)$. Their (horizontal) lifts $U_{\gamma}=(z_1(t),z_2(t))$ satisfy:
\begin{equation*}
\eta=\tan^{-1}t \ \text{ and } \ \xi_1 = \begin{cases} \gamma + \xi_2 & \text{ if $\xi_1 \geq \xi_2$,} \\
\gamma + \xi_2 - 2\pi & \text { if $\xi_1 < \xi_2$}. \end{cases}
\end{equation*} 
Thus, 
\begin{equation*}
U_{\gamma}(t)=\big( \exp\big(i\xi_1(\xi_2)\big) \sin(\tan^{-1}t), \exp(i\xi_2)\cos(\tan^{-1}t)   \big),
\end{equation*}
with $U_{\gamma}(0)=(0,\exp(i\xi_2))$ and $U_{\gamma}(\infty)=(\exp\big(i\xi_1(\xi_2)\big),0) $ for all $\gamma \in [0,2\pi)$. 

Comparing $U_0$ with $U_{\gamma}$ we conclude that
\begin{gather*}
U_{\gamma}(0)=U_0(0), \\
U_{\gamma}(\infty)=\exp(i\gamma) \cdot U_0(\infty).
\end{gather*}
The assertion now follows, because parallel transport is $S^1$-equivariant and bijective (with its inverse given by parallel transport along the same curve with reversed time). 
\\

To complete the proof of Theorem \ref{emt} we need two more steps: First we show that for ``small'' homotopies (this is made precise down below), both pictures are equivalent, i.e. there is a one-to-one correspondence between solutions of the smooth and non-smooth systems. Then we show that the smooth picture is independent of the chosen homotopy. Combining these steps proves the theorem.
\\

\textbf{1. The bijection}

A flow line $u$ with a non-trivial jump in the above sense is a solution of
\begin{align*}
\dot{u}&=\nabla_g f(u), \\
u^-=x,\ u^+&=y, \ x,y\in \C(f), \\ 
\exists \ t_0 \in \mathbb{R}, s\in S^1 : &\lim_{t\to t_0, t < t_0} u(t)=s. \lim_{t\to t_0, t > t_0}u(t).
\end{align*}

We can act with $s^{-1}$ on the second part of $u$ to obtain an alternative description ($t_0=0$) of this system:

\begin{equation} \label{nsp}
\begin{split}
\dot{u}\restriction_{(-\infty,0)}&=\nabla_g f(u\restriction_{(-\infty,0)}), \\
\dot{u}\restriction_{(0,\infty)}&= \nabla_g \big( f\circ \sigma_s \big) (u\restriction_{(0,\infty)}), \\
u^-=x,\ u^+&=s^{-1}.y, \ x,y\in \C(f). \end{split}
\end{equation}

On the other hand, in our smooth picture $u$ (more precisely, the projection onto $M$ of $u: \mathbb{R} \to W$ with $\dot{u}=V(u)$) solves:
\begin{equation} \label{sp} 
\begin{split}
\dot{u}(t)&=\nabla_g F\big(u(t),s,t\big), \\
u^-=x,\ u^+&=s^{-1}.y, \ x,y \in \C(f). \end{split}
\end{equation}
\\

Let $F_{\rho}(p,s,t)=\phi_{\rho}(t) f(p) + (1-\phi_{\rho}(t))f(s.p)$ be a one-parameter family of homotopies from $f$ to $f\circ \sigma_s$, where $\phi_{\rho}: \mathbb{R} \to \mathbb{R}$ is a smooth function satisfying 
\begin{equation*}
\phi_{\rho}(t)=\begin{cases}
    1 & \text{if $ t < -\rho $ },\\
    0 & \text{if $ t > \rho$ }.
  \end{cases}
\end{equation*}
and let $V_{\rho}$ be the associated vector field $\nabla_g F_{\rho}$.
For a given $\rho >0$, 
\begin{equation} \label{sp2}
\begin{split}
\dot{u}_{\rho}(t)&=V_{\rho}\big(u_{\rho}(t),s,t\big), \\ 
u_{\rho}^-=x,\ u_{\rho}^+&=s^{-1}.y, \ x,y \in \C(f), \end{split}
\end{equation}
is just equation $\eqref{sp}$ for $F=F_{\rho}$, whereas for $\rho=0$, 
\begin{equation} \label{nsp2}
\begin{split} 
\dot{u}_0(t)&=V_0\big( u_0(t),s,t\big), \\ 
u_0^-=x,\ u_0^+&=s^{-1}.y, \ x,y\in \C(f), \end{split}
\end{equation}
corresponds to the non-smooth picture of equation $\eqref{nsp}$.
\\

\textbf{a.} Let $u_0$ be a solution of $\eqref{nsp2}$: Since the vector field $V_0$ is discontinuous at $t=0$ we ``smoothen it out'' by considering an equivalent version of the above ODE:
Set $W_{\rho}(p,s,t):=a(t)\cdot V_{\rho}\big(p,s,b(t)\big)$ with $b \in C^{\infty}(\mathbb{R},\mathbb{R}), \ b^{(k)}(0)=0$ for all $k\in \mathbb{N}$ and
\begin{equation*}
\dot{b}(t)=a(t)=\begin{cases}
    1-\exp(\frac{-1}{\epsilon^2-t^2}) & \text{ if $ |t| < \epsilon $ },\\    
0 & \text{ else },
  \end{cases}
\end{equation*}
for some small $\epsilon > 0$.
We need to reparametrize solutions $u_{\rho}$ as well:
\begin{equation*}
v_{\rho}(t):=u_{\rho}\big( b(t) \big).
\end{equation*}
Then for all $\rho \in \mathbb{R}^+_0$:
\begin{align*}
\dot{v}_{\rho}(t) & = \dot{b}(t) \dot{u}_{\rho}\big(b(t)\big) \\
& = \dot{b}(t) V_{\rho}\big( u_{\rho}\big(b(t)\big),s,b(t) \big) \\
& = a(t) V_{\rho}\big(v_{\rho}(t),s,b(t) \big) \\
& = W_{\rho}\big( v_{\rho}(t),s,t \big).
\end{align*}

By construction, $W_{\rho}$ is smooth in all variables. From continuous parameter-dependence of smooth ODEs it follows, that for $\rho$ in a neighbourhood of $0$ there exists a unique solution of $\dot{v}_{\rho}(t)=W_{\rho}\big( v_{\rho}(t),s,t \big)$. $C^0$ closeness of $v_{\rho}$ to $v_0$ implies that for $\rho$ small enough $v_{\rho}^-=v_0^-$ and $v_{\rho}^+=v_0^+$, since critical points are isolated. Transforming back with $b^{-1}$ (defining $b^{-1}(0):=0$, because at $0$, where $\dot{b}=0$, $b$ fails to be a diffeomorphism) we have established one direction of the one-to-one correspondence between solutions of $\eqref{sp2}$ and $\eqref{nsp2}$.
\\

\textbf{b.} For the other direction choose $0<\rho < \min_{x,y \in \C(f),s\in S^1}d(s.x,s.y)$ (exists since $\C(f)$ and $S^1$ are compact sets) and let $u_{\rho}$ be a solution of $\eqref{sp2}$. This ensures $\lim_{t\to \pm \infty} u_{\rho'}(t)= \lim_{t\to\pm \infty}u_{\rho}(t)$ for all $0<\rho'< \rho$. This is the reason why we called $F_{\rho}$ a ``small'' homotpy. 

Define $u_0$ on $(-\infty,0)$ and $(0,\infty)$ by
\begin{equation*}
u_0(t)=\lim_{\rho \to 0}u_{\rho}(t). 
\end{equation*} 
Clearly \begin{gather*}
\dot{u}_0(t)\restriction_{(-\infty,0)}=\nabla_g f\big(u_0(t)\restriction_{(-\infty,0)}\big), \\
\dot{u}_0(t)\restriction_{(0,\infty)}=\nabla_g \big(f\circ \sigma_s\big) \big(u_0(t)\restriction_{(0,\infty)}\big). 
\end{gather*}
Continuity of $u_0$ at $t=0$ follows from
\begin{align*}
 d(u_{\rho}(-\rho),u_{\rho}(\rho))&\leq L(u_{\rho}\restriction_{[-\rho,\rho]}) \\
&= \int_{-\rho}^{\rho} \| \dot{u}_{\rho}(t) \| dt \\
&= \int_{-\rho}^{\rho} \| \phi_{\rho}(t)\cdot \nabla_g f \big(u_{\rho}(t)\big) + \big(1-\phi_{\rho}(t)\big) \cdot \nabla_g \big(f\circ \sigma_s\big)\big(u_{\rho}(t)\big)  \|dt \\
&\leq \int_{-\rho}^{\rho} \| \phi_{\rho}(t)\cdot \nabla_g f \big(u_{\rho}(t)\big) \| + \|\big(1-\phi_{\rho}(t)\big) \cdot \nabla_g \big(f\circ \sigma_s\big)\big(u_{\rho}(t)\big)  \|dt \\
&\leq K_1 \cdot \rho + K_2 \cdot \rho \ \to 0 \text{ as $\rho \to 0$}.
\end{align*}

Here the last line follows from smoothness of $f$ and $\phi$ and compactness of $M$. Therefore, $u_0$ is a continuous piecewise smooth solution of $\eqref{nsp}$.
\\

Putting all together we have established a bijection between the set of solutions of $\eqref{nsp}$ and $\eqref{sp}$. Thus, both associated complexes (consisting of the same groups) have the same differential operator if we choose the homotopy accordingly. It follows that for this special homotopy 
\begin{equation*}
H(\mathcal{C} [T] / (T^2),d+R_1\cdot T)\cong H^*(X^1).
\end{equation*}
\\

\textbf{2. Independence of the chosen homotopy}
\\

Let $F^{\alpha}$ and $F^{\beta}$ be two $S^1$-families of homotopies between $f$ and $f\circ \sigma_s$. We construct a cochain map $K$ between the two associated equivariant Morse complexes $C^{\alpha}:=(CM_{S^1}^*,d^{\alpha}_{S^1})$ and $C^{\beta}:=(CM_{S^1}^*,d_{S^1}^{\beta})$ which induces an isomorphism on homology. 
\\

Without loss of generality assume that $F^{\alpha}$ and $F^{\beta}$ satisfy the conditions of Definition \ref{V_1} and let 
\begin{equation*}
\tilde{H}: [0,1] \times W \to \mathbb{R},\quad  \tilde{H}(\tau,\cdot)=\begin{cases} F^{\alpha} & \text{ for $\tau=0$,} \\ F^{\beta} & \text{ for $\tau=1$,} \end{cases}
\end{equation*} 
be a homotopy between them, independent of $\tau$ near $0$ and $1$. 

Set 
\begin{gather*}
K: C^{\alpha} \to C^{\beta},\\ 
x+y\cdot T\mapsto (P+Q\cdot T)(x+y\cdot T)=Px +(Py +Qx)\cdot T. 
\end{gather*}
$K$ is a cochain map, if 
\begin{gather*}
K d^{\alpha}_{S^1} = d^{\beta}_{S^1} K\\
 \Longleftrightarrow \quad Pd=dP\quad \text{and} \quad Qd+PR_1^{\alpha}=dQ+R_1^{\beta}P. 
\end{gather*}
Therefore, we have to define $P:\C_k(f) \to \C_k(f)$ and $Q:\C_k(f)\to C_{k-2}(f)$, such that both equations are fulfilled.

Use a smooth function $h: [0,1]\to \mathbb{R}, \ \tau \mapsto h(\tau)$ to obtain a Morse-Bott function $H:[0,1]\times W\to \mathbb{R}$ (cf. Subsection $3.2.1$) with critical set
\begin{equation*}
\C(H)= \{0\}\times \C(F^{\alpha}) \cup \{1\}\times\C(F^{\beta}).
\end{equation*}
Recall the notation for the critical submanifolds in Subsection $3.2.1$ and let 
\begin{gather*}
A^{\alpha}_x:=\{0\}\times A_x \quad  B^{\alpha}_{y}:=\{0\}\times B_y, \\
A^{\beta}_x:=\{1\}\times A_x \quad  B^{\beta}_{y}:=\{1\}\times B_y
\end{gather*}
denote the critical submanifolds of $H$. Define a vector field $V^{\tau}$ on $[0,1]\times W$ by
\begin{equation*}
V^{\tau}:= \left(\frac{dH}{d\tau}\partial_{\tau},\nabla_g H,0,\frac{dH}{dr}\partial_r\right)
\end{equation*}
and observe that the flow lines of $V^{\tau}$ living in the two copies of $W$ at $\tau=0$ and $\tau=1$ are precisely those flow lines used to define the operators $d,\ R_1^{\alpha}$ and $R_1^{\beta}$, respectively.

The (un-)stable manifolds associated to $V^{\tau}$ have the following dimensions (cf. Lemma \ref{dimsf1}):
\begin{gather*}
\dim W^u(A^{\alpha}_{x})= n-\mu(x)+3,\quad \dim W^s(A^{\alpha}_{x})=\mu(x)+1,\\
\dim W^u(B^{\alpha}_{y})= n-\mu(y)+2,\quad \dim W^s(B^{\alpha}_{y})=\mu(y)+2.\\
\\
\dim W^u(A^{\beta}_{x})= n-\mu(x)+2,\quad \dim W^s(A^{\beta}_{x})=\mu(x)+2,\\
\dim W^u(B^{\beta}_{y})= n-\mu(y)+1,\quad \dim W^s(B^{\beta}_{y})=\mu(y)+3.
\end{gather*}
Define the connecting spaces of flow lines from $A^{\alpha}_{x}$ to $A^{\beta}_{y}$ and from $B^{\alpha}_{x}$ to $B^{\beta}_{y}$ by
\begin{gather*}
\M^{\alpha \beta}_A(x,y):= W^u(A^{\alpha}_{x})\cap W^s(A^{\beta}_{y}), \\ 
\M^{\alpha \beta}_B(x,y):= W^u(B^{\alpha}_{x})\cap W^s(B^{\beta}_{y}),
\end{gather*} 
and observe that 
\begin{gather*}
\tilde{H}(0,p,s,0)=\tilde{H}(1,p,s,0)=f(p), \\
\tilde{H}(0,p,s,1)=\tilde{H}(1,p,s,1)=f(s.p).
\end{gather*} 
Therefore, there are free $\mathbb{R}$- and $S^1$-actions on both spaces and an isomorphism between them (cf. Proposition \ref{ms1}). Repeating the steps in Section $4.1$ we conclude that the associated moduli space $\widetilde{\M}^{\alpha \beta}_P(x,y)$ is a smooth manifold of dimension $\mu(y)-\mu(x)$.

Now define $P$ on a generator $x\in \C(f)$ by 
\begin{equation*}
x\mapsto \sum_{\mu(y)=\mu(x)}|\widetilde{\M}^{\alpha \beta}_P(x,y)|\bmod{2}\cdot y.
\end{equation*}
If $\mu(y)=\mu(x)+1$, then $\widetilde{\M}^{\alpha \beta}_P(x,y)$ is a one-dimensional manifold. From the compactness and gluing arguments of Chapter $4$ it follows, that it is compactifiable using broken flow lines $(u,v)$ of order two (higher order broken flow lines are not possible - cf. the last part in the proof of the Compactness Theorem). Here $u\in \M^{\alpha}_0(x,z)$ and $v\in \M^{\alpha \beta}_P(z,y)$ with $\mu(z)=\mu(x)+1$, or $u\in \M^{\alpha \beta}_P(x,z)$ and $v\in \M^{\beta}_0(z,y)$ with $\mu(z)=\mu(x)$. Since the subsets $[0,1]\times M \times S^1\times \{0\}$ and $[0,1]\times M \times S^1\times \{1\}$ of $[0,1]\times W$ are both flow-invariant, there are no other flow lines to which a sequence in $\widetilde{\M}^{\alpha \beta}_P(x,y)$ might converge. As in the proof of $d_{S^1}^2=0$ this implies 
\begin{equation*}
P d+dP=0 \Longleftrightarrow P d=d P.
\end{equation*}

On the other hand, the moduli space $\widetilde{\M}_Q^{\alpha \beta}(x,y):=W^u(A^{\alpha}_{x})\ti W^s(B^{\beta}_{y})/\mathbb{R}$ of flow lines from $A^{\alpha}_{x}$ to $B^{\beta}_{y}$ is a manifold of dimension $\mu(y)-\mu(x) +2 $. Therefore, we define $Q$ by 
\begin{equation*}
x \mapsto \sum_{\mu(y)=\mu(x)-2}|\widetilde{\M}_Q^{\alpha \beta}(x,y)|\bmod{2}\cdot y.
\end{equation*}

In the same way as above we conclude, that for $\mu(y)=\mu(x)-1$ the compactification of $\widetilde{\M}_Q^{\alpha \beta}(x,y)$ is given by adding broken flow lines $(u,v)$ of order two. Here we encounter four possible cases, depending on at which critical submanifold of $[0,1]\times W$ ``breaking up'' occurs, i.e. at $(\tau,r)=(0,0),(1,0),(0,1)$ or $(1,1)$: 
\begin{align*}
(u,v)&\in \M^{\alpha}_1(x,z) \times \M^{\alpha \beta}_P(z,y), \ \mu(z)=\mu(x)-1,\ (\tau,r)=(0,1), \\
(u,v)&\in \M^{\alpha \beta}_P(x,z)\times \M_1^{\beta}(z,y), \ \mu(z)=\mu(x),\ (\tau,r)=(1,0),\\
(u,v)&\in \M^{\alpha}_0(x,z)\times \M^{\alpha \beta}_Q(z,y), \ \mu(z)=\mu(x)+1,\ (\tau,r)=(0,0), \\
(u,v)&\in \M^{\alpha \beta}_Q(x,z)\times \M^{\beta}_0(z,y), \ \mu(z)=\mu(x)-2,\ (\tau,r)=(1,1).
\end{align*}

From gluing arguments it follows, that these are the only possible cases. Thus, 
\begin{equation*}
P R^{\alpha}_1+R_1^{\beta} P+Qd+dQ=0.
\end{equation*}

This shows that $K$ is a cochain map.
\\

Note that if $\tilde{H}$ is the constant homotopy, then obviously $P$ is the identity map. Moreover, in this case $\widetilde{\M}_Q^{\alpha \beta}(x,y)$ consists of flow lines $(a,b):\mathbb{R}\to [0,1]\times W$ with 
\begin{align*}
\dot{a}&=\frac{dh}{d\tau}\circ a,\quad a^-=0,\ a^+=1, \\ 
\dot{b}&=V\circ b,\quad b^-\in A_x,\ b^+\in B_y.
\end{align*} 
Corollary \ref{dims} implies that $\widetilde{\M}_Q^{\alpha \beta}(x,y)=\emptyset$, i.e. $Q$ is the zero map and therefore $K=P+Q\cdot T$ is the identity map on cochains.

\begin{figure}[hbt]
	\centering
\includegraphics[width=1.00\textwidth]{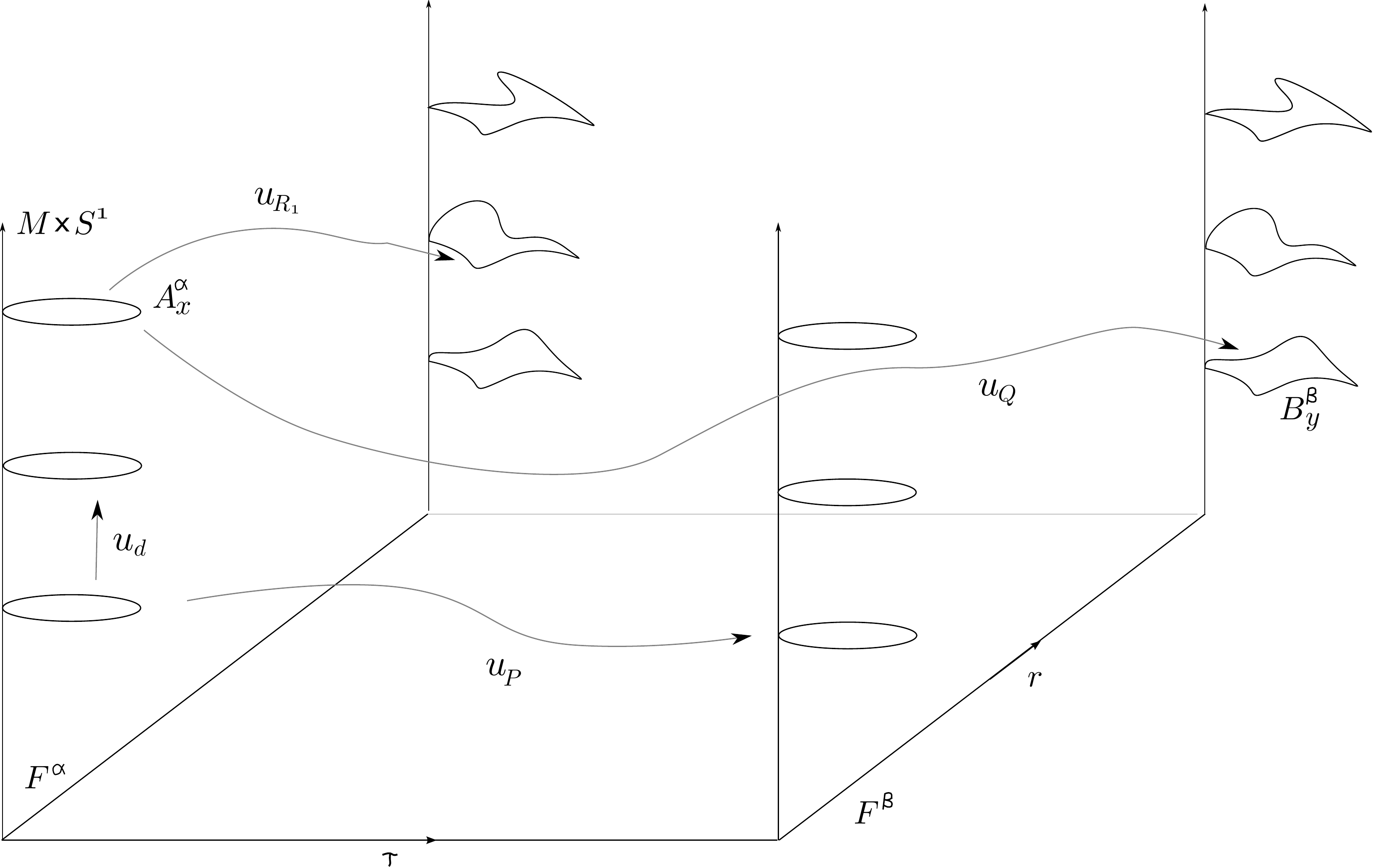}
	\caption{Flow lines $u$ on $[0,1]\times W$}
	\label{bild7}
\end{figure}

Now let $F^{\alpha},F^{\beta},F^{\gamma},F^{\delta}$ be four $S^1$-families of homotopies between $f$ and $f\circ \sigma_s$ and let 
\begin{equation*}
\tilde{\mathcal{H}}: [0,1]\times [0,1]\times W \to \mathbb{R},\ \tilde{\mathcal{H}}(\nu,\tau,\cdot)=\begin{cases} F^{\alpha} & \text{ for $(\nu,\tau)=(0,0)$,} \\ F^{\beta} & \text{ for $(\nu,\tau)=(1,0)$,} \\ F^{\gamma} & \text{ for $(\nu,\tau)=(0,1)$,} \\ F^{\delta} & \text{ for $(\nu,\tau)=(1,1)$,}  \end{cases}
\end{equation*} 
be a homotopy of homotopies, independent of $\nu$ and $\tau$ near $(0,0),(1,0),(0,1)$ and $(1,1)$. Let $K^{ij}$ denote the cochain map between $C^i$ and $C^j$ ($i,j \in \{\alpha,\beta,\gamma,\delta\}$), induced by the homotopy between $F^i$ and $F^j$ given by the corresponding restriction of $\tilde{\mathcal{H}}$ to a face of $[0,1]^2$. We construct a cochain homotopy $\mathcal{K}$ between $K^{\beta \delta} K^{\alpha \beta}$ and $K^{\gamma \delta} K^{\alpha \gamma}$, i.e a cochain map of degree $-1$:
\begin{gather*}
\mathcal{K}: C^{\alpha} \to C^{\delta}, \\
\mathcal{K}:=\mathcal{P}+\mathcal{Q}\cdot T,
\end{gather*}
satisfying
\begin{align*}
\mathcal{K}  d^{\alpha}_{S^1} + d^{\delta}_{S^1} \mathcal{K}  = K^{\beta \delta} K^{\alpha \beta} - K^{\gamma \delta} K^{\alpha \gamma}, 
\end{align*}
which is equivalent to
\begin{gather*}
\mathcal{P}d+d\mathcal{P}=P^{\beta \delta}P^{\alpha \beta}- P^{\gamma \delta}P^{\alpha \gamma} \\
\text{and} \\
\mathcal{P}R_1^{\alpha}+R_1^{\delta}\mathcal{P} + \mathcal{Q}d +d\mathcal{Q}=P^{\beta \delta}Q^{\alpha \beta}+ Q^{\beta \delta}P^{\alpha \beta}- P^{\gamma \delta}Q^{\alpha \gamma}- Q^{\gamma \delta}P^{\alpha \gamma}.
\end{gather*}

Repeating the steps in the construction of the map $K$, we obtain a Morse-Bott function $\mathcal{H}:[0,1]^2\times W\to \mathbb{R}$ with critical set 
\begin{equation*}
\{(0,0)\}\times \C(F^{\alpha}) \cup \{(1,0)\}\times \C(F^{\beta}) \cup \{0,1\}\times \C(F^{\gamma}) \cup \{(1,1)\}\times \C(F^{\delta}).
\end{equation*}
The stable and unstable manifolds of the vector field 
\begin{equation*}
V^{\nu \tau}:=\left(\frac{d\mathcal{H}}{d\nu}\partial_{\nu},\frac{d\mathcal{H}}{d\tau}\partial_{\tau},\nabla_g \mathcal{H},0,\frac{d\mathcal{H}}{dr}\partial_r\right)
\end{equation*}
have the following dimensions ($i=\{\beta,\gamma\}$):
\begin{gather*}
\dim W^u(A^{\alpha}_x)=n-\mu(x)+ 4,\quad \dim W^s(A^{\alpha}_x)= \mu(x)+1,\\
\dim W^u(B^{\alpha}_y)=n-\mu(y)+ 3,\quad \dim W^s(B^{\alpha}_y)= \mu(y)+2.\\
\\
\dim W^u(A^i_x)=n-\mu(x)+ 3,\quad \dim W^s(A^i_x)= \mu(x)+2,\\
\dim W^u(B^i_y)=n-\mu(y)+ 2,\quad \dim W^s(B^i_y)= \mu(y)+3.\\
\\
\dim W^u(A^{\delta}_x)= n-\mu(x)+2,\quad \dim W^u(A^{\delta}_x)= \mu(x)+3,\\
\dim W^u(B^{\delta}_y)= n-\mu(y)+1,\quad \dim W^u(B^{\delta}_y)= \mu(y)+4.
\end{gather*}

Let $\M^{\alpha \delta}_{\mathcal{P}}(x,y)$ denote the space of flow lines from $A^{\alpha}_x$ to $A^{\delta}_y$ (or equivalently with $A$ replaced by $B$). As above we conclude that this space is endowed with free $\mathbb{R}$- and $S^1$-actions and the quotient $\widetilde{\M}^{\alpha \delta}_{\mathcal{P}}(x,y)$ is a manifold of dimension $\mu(y)-\mu(x)+1$. Therefore, we define $\mathcal{P}:\C_k(f) \to \C_{k-1}(f)$ by
\begin{equation*}
x \mapsto \sum_{\mu(y)=\mu(x)-1} |\widetilde{\M}^{\alpha \delta}_{\mathcal{P}}(x,y)|\bmod{2}\cdot y.
\end{equation*}

For $\mu(y)=\mu(x)$ the moduli space $\widetilde{\M}^{\alpha \delta}_{\mathcal{P}}(x,y)$ is one-dimensional. Its compactification is given by adding broken flow lines $(u,v)$ of the following types:
\begin{align*}
(u,v)&\in \M_0^{\alpha}(x,z)\times \M^{\alpha \delta}_{\mathcal{P}}(z,y), \ \mu(z)=\mu(x)+1, \\
(u,v)&\in \M^{\alpha \delta}_{\mathcal{P}}(x,z)\times \M_0^{\delta}(z,y), \ \mu(z)=\mu(x)-1, \\
(u,v)&\in \M^{\alpha \beta}_P(x,z)\times \M^{\beta \delta}_P(z,y), \ \mu(z)=\mu(x), \\
(u,v)&\in \M^{\alpha \gamma}_P(x,z)\times \M^{\gamma \delta}_P(z,y), \ \mu(z)=\mu(x). 
\end{align*}
This shows 
\begin{equation*}
\mathcal{P}d+d\mathcal{P}=P^{\beta \delta} P^{\alpha \beta} - P^{\gamma \delta} P^{\alpha \gamma}.
\end{equation*}

Regarding flow lines from $A^{\alpha}_x$ to $B^{\delta}_y$, the corresponding moduli space $\widetilde{\M}^{\alpha \delta}_{\mathcal{Q}}(x,y)$ has dimension $\mu(y)-\mu(x)+3$. We define $\mathcal{Q}:\C_k(f) \to \C_{k-3}(f)$ by
\begin{equation*}
x \mapsto \sum_{\mu(y)=\mu(x)-3} |\widetilde{\M}^{\alpha \delta}_{\mathcal{Q}}(x,y)|\bmod{2}\cdot y.
\end{equation*}
If $\mu(y)=\mu(x)-2$, then $\widetilde{\M}^{\alpha \delta}_{\mathcal{Q}}(x,y)$ is a one-dimensional manifold, compactifiable with broken flow lines $(u,v)$ of order two. There are the following types of broken flow lines, depending on which of the vertices of $[0,1]^2$ they pass:
\begin{align*}
(u,v)&\in \M_1^{\alpha}(x,z)\times \M^{\alpha \delta}_{\mathcal{P}}(z,y), \ \mu(z)=\mu(x)-1, \\
(u,v)&\in \M^{\alpha \delta}_{\mathcal{P}}(x,z)\times \M_1^{\delta}(z,y), \ \mu(z)=\mu(x)-1, \\
(u,v)&\in \M^{\alpha \delta}_{\mathcal{Q}}(x,z)\times \M_0^{\delta}(z,y), \ \mu(z)=\mu(x)-3, \\
(u,v)&\in \M^{\alpha}_0(x,z)\times \M^{\alpha \delta}_{\mathcal{Q}}(z,y), \ \mu(z)=\mu(x)+1, \\
(u,v)&\in \M_Q^{\alpha \beta}(x,z)\times \M_P^{\beta \delta}(z,y), \ \mu(z)=\mu(x)-2, \\
(u,v)&\in \M^{\alpha \gamma}_Q(x,z)\times \M^{\gamma \delta}_P(z,y), \ \mu(z)=\mu(x)-2, \\
(u,v)&\in \M^{\alpha \beta}_P(x,z)\times \M_Q^{\beta \delta}(z,y), \ \mu(z)=\mu(x), \\
(u,v)&\in \M^{\alpha \gamma}_P(x,z)\times \M^{\gamma \delta}_Q(z,y), \ \mu(z)=\mu(x). 
\end{align*}

Hence, 
\begin{equation*}
\mathcal{P}R_1^{\alpha}+R_1^{\delta}\mathcal{P} + \mathcal{Q}d +d\mathcal{Q}=P^{\beta \delta}Q^{\alpha \beta}+ Q^{\beta \delta}P^{\alpha \beta}- P^{\gamma \delta}Q^{\alpha \gamma}- Q^{\gamma \delta}P^{\alpha \gamma},
\end{equation*}
and therefore $\mathcal{K}$ is a cochain homotopy. 
\\

In the special case $F^{\alpha}=F^{\gamma}$, $F^{\beta}=F^{\delta}$, it follows that for two homotopies $\tilde{\mathcal{H}}\restriction_{[0,1]\times \{0\}\times W}$ and $\tilde{\mathcal{H}}\restriction_{[0,1]\times \{1\} \times W}$ from $F^{\alpha}$ to $F^{\beta}$ the induced cochain maps $K^{\alpha \beta}_0$ and $K^{\alpha \beta}_1$ between $C^{\alpha}$ and $C^{\beta}$ are cochain homotopic:
\begin{align*}
\mathcal{K}  d^{\alpha}_{S^1} + d^{\beta}_{S^1} \mathcal{K} & = K^{\beta \beta} K^{\alpha \beta}_0 - K^{\alpha \beta}_1 K^{\alpha \alpha} \\
&= K^{\alpha \beta}_0 - K^{\alpha \beta}_1.
\end{align*}

Finally, setting $\gamma=\alpha$ and $\delta=\alpha$, we conclude that $P^{\beta \alpha}P^{\alpha \beta}$ is cochain homotopic to the identity:
\begin{align*}
\mathcal{K}  d^{\alpha}_{S^1} + d^{\beta}_{S^1} \mathcal{K} & = K^{\beta \alpha} K^{\alpha \beta} - K^{\alpha \alpha} K^{\alpha \alpha} \\
&= K^{ \beta \alpha}K^{\alpha \beta}  - 1.
\end{align*}
\\

Therefore, all three properties of the continuation principle mentioned in Chapter $2$ are satisfied. Thus, $H_*(C^{\alpha})\cong H_*(C^{\beta})$. This finishes the proof.
\end{proof}

\section{Summary}
\label{sum} 

So far we have shown that the $S^1$-equivariant Morse cohomology equals the (ordinary) equivariant cohomology for smooth closed $S^1$-manifolds $M$ of dimension less than $3$. It remains to show the case $m>1$ in the last theorem of Chapter $5$. For this one hast to dig a little deeper into the structure of the bundles $\pi_m$ to generalize and prove the parallel transport property mentioned in the proof of Theorem $5.4$. Then one needs to show the equivalence of the ``smooth'' and ``non-smooth'' pictures of $k$-jump flow lines and independence of the chosen homotopy. But this seems to be just a rather technical issue, as well as the question of extending the whole construction to a broader class of $S^1$-spaces (i.e. dropping the assumption that $M$ is closed).

More interesting is the question of orientability, i.e. $S^1$-equivariant Morse cohomology with $\mathbb{Z}$-coefficients. As mentioned in Remark \ref{orient} the moduli spaces of $k$-jump flow lines lack a natural concept of orientation - if there is no fruitful geometrical idea, maybe one needs to resolve this by using a Floer-type functional analytic approach (cf. Salamon \cite{Sal} or Schwarz \cite{Sch}).

Another question is the following: Is there a similiar way to define $HM^*_G(M)$ for other Lie groups $G$. This is not clear at all, because $S^1$ is the only connected Lie group with its subgroups being either $S^1$ itself or discrete. As a consequence the orbits of a $G$-action will in general have very different structures (e.g. dimensions). Moreover, our construction relies heavily on the struture of $BS^1=\mathbb{CP}^{\infty}$. $G=T^n=(S^1)^n$ is a special case which deserves some attention, but there is definitely a lot of work to do to generalize the ideas presented in this thesis.

As mentioned in the introduction, a related interesting object is $S^1$-equivariant Floer cohomology and, in the same way classical Morse theory served as a toy model for Floer theory, one might expect the same for equivariant Morse theory. For this thesis we have also studied the Floer-type approach to Morse homology: Transversality and compactness are not hard to show using Fredholm theory on Banach bundles, but it gets tricky with the gluing map and the orientation concept - this is a interesting topic for further studies in this direction.

\addcontentsline{toc}{section}{References}

%$\; $\br\br\BR
%\textbf{Selbst\"andigkeitserkl\"arung}\br\BR
%\\

%\BR Ich erkl\"are, dass ich die vorliegende Arbeit selbst\"andig und nur unter Verwendung der angegebenen Literatur und Hilfsmittel angefertigt habe.
%\\

%\br\br
%Berlin, den\br\br\br

%\newpage

%\textbf{Einverst\"andniserkl\"arung}\br\BR
%\\

%\BR Hiermit erkl\"are ich mich einverstanden, dass ein Exemplar meiner Diplomarbeit in der Bibliothek des Institutes f\"ur Mathematik verbleibt.
%\\

%\br\br
%Berlin, den\br\br\br


\begin{thebibliography}{10}

\bibitem{Au}
Michele Audin,
\newblock \textit{Torus actions on symplectic manifolds},
\newblock Progress in Mathematics Vol. 93, Birkh\"{a}user (2004).

\bibitem{Li}
Matvei Libine,
\newblock \textit{Lecture Notes on Equivariant Cohomology},
\newblock arXiv:0709.3615v1 [math.SG] (2008).

\bibitem{Sz}
Richard J. Szabo,
\newblock \textit{Equivariant Cohomology and Localization of Path Integrals},
\newblock Lecture Notes in Physics, Springer (2000).

\bibitem{Bo}
Armand Borel, 
\newblock \textit{Seminar on transformation groups}. 
\newblock Annals of Mathematics Studies no. 46, Princeton University Press (1960).

\bibitem{St}
Norman E. Steenrod,
\newblock \textit{The topology of fibre bundles},
\newblock Princeton University Press (1951).

\bibitem{Mi1}
John Milnor, James Stasheff,
\newblock \textit{Characteristic Classes},  
\newblock Princeton University Press (1974).

\bibitem{Sat}
Ichir Satake, 
\newblock \textit{On a generalization of the notion of manifold},
\newblock Proc. Nat. Acad. Sciences 42 (1956)

\bibitem{Ca}
Henri Cartan, 
\newblock \textit{Cohomologie r\'eelle d'un espace fibr\'e principal diff\'erentiable. I : notions d'alg\'ebre diff\'erentielle, alg\'ebre de Weil d'un groupe de Lie},
\newblock S\'eminaire Henri Cartan Vol. 2, Birkh\"{a}user (1949-1950).

\bibitem{Ca2}
Henri Cartan
\newblock \textit{Cohomologie r\'eelle d'un espace fibr\'e principal diff\'erentiable. II : transgression dans un groupe de Lie et dans un espace fibr\'e principal; recherche de la cohomologie de l'espace de base},
\newblock S\'eminaire Henri Cartan Vol. 2, Birkh\"{a}user (1949-1950).
 
\bibitem{GS}
Victor W. Guillemin, Shlomo Sternberg,
\newblock \textit{Supersymmetry and Equivariant de Rham theory},
\newblock Springer (1999).

\bibitem{AB}
D.M. Austin, P.J. Braam, 
\newblock \textit{Morse-Bott theory and equivariant cohomology},
\newblock The Floer memorial volume, Progr. Math. 133, Birkh\"{a}user (1995).

\bibitem{Hu}
Michael Hutchings,
\newblock \textit{Floer homology of families I},
\newblock Algebr. Geom. Topol. 8 (2008).

\bibitem{We}
Joa Weber,
\newblock \textit{The Morse-Witten complex via dynamical systems}
\newblock Expo. Math. 24 (2006).

\bibitem{Sal}
Dietmar Salamon,
\newblock \textit{Lectures on Floer homology},
\newblock Symplectic geometry and topology, IAS/Park City Mathematics series Vol. 7 (1999)

\bibitem{Sch}
Matthias Schwarz,
\newblock \textit{Morse Homology},
\newblock Progress in Mathematics 111, Birkh\"{a}user (1993).

\bibitem{Ff}
Urs Frauenfelder,
\newblock \textit{The Arnold-Givental conjecture and moment Floer homology},
\newblock International Mathematics Research Notices, Vol. 2004, Nr. 42 (2004).

\bibitem{Mo}
Marston Morse,
\newblock \textit{The Calculus of Variations in the Large},
\newblock American Mathematical Society Colloqium Publication 18 (1934).

\bibitem{Mi2}
John Milnor,
\newblock \textit{Morse Theory},
\newblock Ann. Math. Studies 51, Princeton U. Press (1963). 

\bibitem{Bott}
Raoul Bott, 
\newblock \textit{Morse theory indomitable},
\newblock Publications math\'ematiques de l'I.H.\'E.S. 68 (1988).

\bibitem{Wi}
Edward Witten,
\newblock \textit{Supersymmetry and Morse Theory},
\newblock J. Diff. Geom. 17 (1982). 

\bibitem{Sm}
Stephen Smale,
\newblock \textit{On gradient dynamical systems},
\newblock Ann. Math. 74 (1961).

\bibitem{Wall}
C.T.C. Wall,
\newblock \textit{Reflections on gradient vector fields},
\newblock Proc. Liverpool Singularities Symposium II, Lecture Notes in Math. 209, Springer (1971).

\bibitem{Lat}
Janko Latschev,
\newblock \textit{Gradient flows of Morse-Bott functions},
\newblock Math. Ann. 318 (2000).

\bibitem{BH}
Augustin Banyaga, David E. Hurtubise,
\newblock \textit{Morse-Bott homology},
\newblock arXiv:math/0612316v2 [math.AT] (2006).

\bibitem{BH2}
Augustin Banyaga, David E. Hurtubise,
\newblock \textit{Lectures on Morse Homology},
\newblock Kluwer Texts in the Mathematical Sciences Vol. 29, Kluwer (2004).

\bibitem{Hai}
Mark Haiman,
\newblock \textit{A simple and relatively efficient triangulation of the n-cube},
\newblock Discrete \& Computational Geometry Vol. 6, Issue 4 (1991).

\bibitem{N}
Liviu I. Nicolaescu,
\newblock \textit{An Invitation to Morse theory},
\newblock Universitext, Springer (2007).

\bibitem{Mi3}
John Milnor, 
\newblock \textit{Lectures on the h-Cobordism Theorem}, 
\newblock Princeton University Press (1965).

\bibitem{KP}
U. Kirchgraber, K.J. Palmer
\newblock \textit{Geometry in the neighborhood of invariant manifolds of maps and flows and linearization},
\newblock Pitman research notes in mathematics series 233 (1990).

\bibitem{CW}
Jacky Cresson, Stephen Wiggins,
\newblock \textit{A $\lambda$-Lemma for normally hyperbolic invariant manifolds},
\newblock arXiv:math/0510645v1 [math.DS] (2005).

\bibitem{Sp}
Edwin H. Spanier,
\newblock \textit{Algebraic Topology},
\newblock Springer (1966).

\end{thebibliography}
\end{document}